\theoremstyle{plain}
\newtheorem{theorem}{Theorem}[section]
\numberwithin{equation}{section}
\theoremstyle{remark}
\newtheorem{remark}{Remark}[section]
 \numberwithin{equation}{section}
\def\<{\left < }
\def\>{\right >}
\def\({\left ( }
\def\){\right )}
\def\e{\eqref}
\def\n{\nabla}
\def\e{\eqref}
\def\p{\partial }
\def\la{\lambda}
\def\sa{\sigma}
\newtheorem{thm}{Theorem}[section]
\newtheorem{lem}{Lemma}[section]
\newtheorem{rem}{Remark}[section]
\newtheorem{prop}{Proposition}[section]
\newtheorem{cor}{Corollary}[section]
\newtheorem{defn}{Definition}[section]
\date{}
\newcommand{\bg} {\begin{equation}}
\newcommand{\ede} {\end{equation}}
\newcommand{\D}{\Delta}
\newcommand{\vp}{\varphi}
\def\ric{\operatorname{Ric}}
\def\ov{\overline}
\def\div{\operatorname{div}}
\begin{document}
\title[$\Phi$-Harmonic Maps and $\Phi$-Superstrongly Unstable Manifolds]
{$\Phi$-Harmonic Maps \\ and $\Phi$-Superstrongly Unstable Manifolds}
\author[Yingbo Han]{Yingbo Han$^{\ast}$}
\address{School of Mathematics and Statistics\\
    Xinyang Normal University \\Xinyang, 464000, Henan, P. R. China}
\email{yingbohan@163.com}

\author[Shihshu Walter Wei]{Shihshu Walter Wei$^{\ast \ast}$}
\address{Department of Mathematics\\
University of Oklahoma\\ Norman, Oklahoma 73019-0315\\ U.S.A.}
\email{wwei@ou.edu}

\thanks{
$^{\ast}$ Research supported in part by the National Natural Science Foundation of China (Grant No. 11971415, 11701494) and the Nanhu Scholars Program for Young Scholars of XYNU and the Universities Young Teachers Program of Henan Province (2016GGJS-096) and Teacher Education Project of XYNU (2019-JSJYYJ-12).\\
$^{\ast \ast}$ Research supported in part by NSF (DMS-1447008)}

\subjclass[2010]{53C43, 58E20, 58E30}
\keywords{$\Phi$-harmonic maps; $\Phi$-harmonic stable maps; $\Phi$-SSU manifold;.$\Phi$-SU manifold;}

\begin{abstract}In this paper, we motivate and define $\Phi$-energy density, $\Phi$-energy, $\Phi$-harmonic maps and stable $\Phi$-harmonic maps. Whereas harmonic maps or $p$-harmonic maps can be viewed as critical points of the integral of  $\sigma_1$ of a pull-back tensor, $\Phi$-harmonic maps can be viewed as critical points of the integral of $\sigma_2$ of a pull-back tensor.
By an extrinsic average variational method in the calculus of variations (cf. \cite{HW,WY,13,HaW}), we derive the average second variation formulas for $\Phi$-energy functional, express them in orthogonal notation in terms of the differential matrix, and find $\Phi$-superstrongly unstable $(\Phi$-$\text{SSU})$ manifolds. We prove, in particular that every compact $\Phi$-$\text{SSU}$ manifold must be $\Phi$-strongly unstable $(\Phi$-$\text{SU})$, i.e., $(a)$ A compact $\Phi$-$\text{SSU}$ manifold cannot be the target of any nonconstant stable $\Phi$-harmonic maps from any manifold, $\rm (b)$ The homotopic class of any map from any manifold into a compact $\Phi$-$\text{SSU}$ manifold contains elements of arbitrarily small $\Phi$-energy,
$(\rm c)$  A compact $\Phi$-$\text{SSU}$ manifold cannot be the domain of any nonconstant stable $\Phi$-harmonic map into any manifold, and $(\rm d)$ The homotopic class of any map from a compact $\Phi$-$\text{SSU}$ manifold into any manifold contains elements of arbitrarily small $\Phi$-energy (cf. Theorem $1.1 (a),(b),(c)$, and $(d)$.) We also provide many examples of $\Phi$-SSU manifolds, and establish a link of  $\Phi$-SSU manifold to $p$-SSU manifold and topology.
The extrinsic average variational method in the calculus of variations that we have employed is in contrast to an average method in PDE that we applied in \cite {CW} to obtain sharp growth estimates for warping functions in multiply warped product
manifolds.
\end{abstract}

\maketitle

\section{Introduction}
Symmetric $2$-covariant tensor fields $\alpha$ on a Riemannian manifold $M$ of dimension $m$ such as the Riemannian metric of $M$, the Ricci tensor of $M$, a second fundamental form (for a given direction) of an immersion of $M\, ,$ the pull back metric tensor $u^{\ast} h$ on $M$ from  a smooth map $u: (M, g) \to (N,h)\, ,$ $F$-stress energy tensor of $u$ (where $F: [0, \infty) \to [0, \infty)$ is a $C^2$ strictly increasing function with $F(0)=0,$ cf. \cite {DW}), etc are of fundamental importance. At any fixed point $x_0 \in M\, ,$ $\alpha$ has the eigenvalues $\lambda$ relative to the metric $g$ of $M$; i.e., the $m$ real roots of the equation $\det (g_{ij} \lambda - \alpha_{ij}) = 0\, $ where $g_{ij} = g(e_i,e_j)$,  $\alpha_{ij} = \alpha(e_i,e_j)\, ,$ and $\{e_1, \cdots e_m\}$ is a basis for $T_{x_0}(M)\, .$  The {\it algebraic} invariants - the $k$-th elementary symmetric function of the eigenvalues of $\alpha$ at $x_0$, denoted by $\sigma_k (\alpha_{x_0}), 1 \le k \le m $ frequently have {\it geometric} meaning of the manifold $M$ or the map $u$ on $M$ with analytic, topological and physical impacts. For example, if we take $\alpha$ to be the Ricci tensor of $M\, ,$ then $\sigma _1 (\alpha)$  is
the scalar curvature of $M$ and is a central theme of Yamabi problem (\cite {T,A,S,J}) and conformal geometry (e.g. \cite{CY}, \cite {DLW}). If we take $\alpha$ to be the above second fundamental form, then  $\sigma _1(\alpha)$ and $\sigma _m(\alpha)$ are the mean curvature and the Gauss-Kronecker curvature (for that given direction) respectively. In the study of prescribed curvature problems in PDE, the existence of closed starshaped hypersurfaces of prescribed mean curvature in Euclidean space was proved by A.E. Treibergs and S.W. Wei \cite {TW}, solving a problem of F. Almgren and S.T. Yau \cite {Y}. While the case of prescribed Guass-Kronecker curvature was studied by V.I. Oliker \cite {O} and P. Delano\"e \cite{D}, the case of prescribed $k$-th mean curvature, in particular the intermediate cases, $2 \le k \le m-1$ were treated by L. Caffarelli, L. Nirenberg and J. Spruck \cite{CNS}.

On the other hand, from the viewpont of geometric mapping theory, the energy density $e(u)$ of $u$, the $p$-energy density $e_p(u)$ of $u$ and the $F$-energy density $e_F(u)$ of $u$  are  $\frac 12 \sigma _1(\alpha)$ , $\frac 1p \big (\sigma _1(\alpha )^{\frac p2}\big )\, $ and $F \circ \big ( \sigma _1(\alpha)\big )\, $ respectively, where  $\alpha$ is $u^{\ast} h\,
$, $1 \le p < \infty$, and  $F$ is the function as above.

In this paper, we define {\it $\Phi$-energy density} $e_{\Phi}(u)$ of $u$ to be a quarter of the second symmetric function $\sigma _2$ of $\alpha\, ,$ given by \begin{equation}\label{1.1} e_{\Phi}(u) = \frac 14 \sigma _2 (\alpha)\, , \, \text{where} \,  \alpha = u^{\ast} h\,
; \, \text{i.e.}\, ,  e_{\Phi}(u) = \frac 14 \sum_{i,j=1}^m\langle du(e_i),du(e_j)\rangle ^2\, .\end{equation} Here $\{e_1, \cdots e_m\}$ is a local orthonormal frame field on $M\, ,$ and $du$ is the differential of $u\, .$ Just as the energy $E(u)\, ,$ the $p$-energy $E_p(u)\, $ and the $F$-energy $E_{F}(u)$ of $u$ are the integrals of the energy density $e(u)$ of $u\, ,$ the $p$-energy density $e_p(u)$ of $u\, $ and the $F$-energy density $e_{F}(u)$ of $u\, $ respectively over the source manifold $M$ with the volume element $dx$, so we define
the $\Phi$-energy $E_{\Phi} (u)$ of $u$ to be \begin{equation}E_{\Phi} (u) = \int _M e_{\Phi}(u) \, dx\,  .\end{equation} Similarly, just as  $u$ is said to be {\it harmonic, $p$-harmonic, and $F$-harmonic} if it is a critical point of the energy functional $E(u)\, ,$  the $p$-energy functional $E_p(u)\, $ and the $F$-energy functional  $E_F(u)$ of $u$ respectively with respect to any smooth, compactly supported variation of $u$, so we make the following.

\begin{defn}
A smooth map $u$ is said to be {\it $\Phi$-harmonic } if it is a critical point of the $\Phi$-energy functional $E_{\Phi}$ with respect to any smooth compactly supported variation of $u\, ,$ stable $\Phi$-harmonic or simply $\Phi$-stable if $u$ is a local minimum of $E_{\Phi} (u)\, ,$ and $\Phi$-unstable if $u$ is not $\Phi$-stable.
\end{defn}
\bigskip

 We apply \emph {an extrinsic average variational method} in
the calculus of variations (\cite {W0})
and find a large class of manifolds of positive Ricci curvature that enjoy rich properties, and introduce the notions of \emph {superstrongly unstable $(\operatorname{SSU})$ manifolds} and \emph {$p$-superstrongly unstable $(p$-$\operatorname{SSU})$ manifolds} (\cite {WY,13,15}).
\begin{defn}
A Riemannian manifold $N$ with its Riemannian metric $\langle \, , \, \rangle _N$ is said to be
{\bf superstrongly unstable (SSU)} , if there exists an
isometric immersion of $N$ in $(\mathbb R^q, \langle \, \cdot\, \rangle _{\mathbb R^q})$ with its second fundamental form $\mathsf B$, such that for every {\sl unit} tangent vector $\mathsf x$
to $N$ at every point $y\in N$, the following symmetric linear operator $Q^N_y$
is {\sl negative definite}.
\begin{equation}\label{1.3}\langle Q^N_y(\mathsf x),\mathsf x\rangle_N=\sum^n_{\beta=1} \bigg (2
\langle \mathsf B(\mathsf x,\mathsf e_{\beta}), \mathsf B(\mathsf x,\mathsf e_{\beta})\rangle _{\mathbb R^q}  -
\langle \mathsf B(\mathsf x,\mathsf x), \mathsf B(\mathsf e_{\beta},\mathsf e_{\beta})\rangle _{\mathbb R^q} \bigg )\end{equation}
and $N$ is said to be
{\bf $\bold p$-superstrongly unstable ($p$-SSU)} for $p\geq 2$ if the following
functional is {\sl negative valued}.
\begin{equation}\label{1.4}\mathsf F_{p,y}(\mathsf x)=(p-2)\langle \mathsf B(\mathsf x,\mathsf x),\mathsf B(\mathsf x,\mathsf x)\rangle _{\mathbb R^q} + \langle Q^N_y(\mathsf x),\mathsf x\rangle _N,\end{equation}
where $\lbrace\mathsf e_1,\ldots,\mathsf e_n\rbrace$ is an orthonormal frame on $N$.
\end{defn}

In this paper we show that the extrinsic average variational method in
the calculus of variations employed in the study of harmonic maps, $p$-harmonic maps, $F$-harmonic maps and Yang-Mills fields can be extended to the study of $\Phi$-harmonic maps. In fact, we find a large class of manifolds with rich properties, \emph {$\Phi$-superstrongly unstable $(\Phi$-$\operatorname{SSU})$ manifolds}, establish their links to $p$-$\operatorname{SSU}$ manifolds and topology, and apply the theory of $p$-harmonic maps, minimal varieties and Yang-Mills fields to study such manifolds. With the same notations as above, we introduce the following notions:

\begin{defn}\label{D:1.3}
A Riemannian $n$-manifold $N$ is said to be $\Phi$-supersrongly unstable $(\Phi$-$\text{SSU})$ if there exists an isometric immersion of $N$ in $\mathbb R^q$ with its second fundamental form $\mathsf B$ such that, for all unit tangent vectors $\mathsf x$ to $N$ at every point $y\in N$, the following functional is always negative-valued:
\begin{equation}\label {1.5}
\mathsf F_{y}(\mathsf x)=\sum_{\beta=1}^n\bigg (4\langle \mathsf B(\mathsf x,\mathsf e_{\beta}),\mathsf B(\mathsf x,\mathsf e_{\beta})\rangle _{\mathbb R^q} - \langle \mathsf B(\mathsf x,\mathsf x),\mathsf B(\mathsf e_{\beta},\mathsf e_{\beta})\rangle _{\mathbb R^q}\bigg ),
\end{equation}
\end{defn}

Examples of  $\Phi$-$\text{SSU}$ manifolds include $n$-dimensional elliptic paraboloid in $\mathbb R^{n+1}$, $\{(x_1, \dots, x_n, y): y=x_1^2+\dots +x_n^2\}\, ,$ the standard $n$-sphere $S^n\, ,$ for $n>4$, certain minimal submanifolds in ellipsoids and in convex hypersurfaces, etc. (cf. Section 7.)

Furthermore, we prove, in particular,

\begin{thm} If $N$ is a compact $\Phi$-$\operatorname{SSU}$ manifold, then
\begin{enumerate}
\item[{\rm (a)}] For every compact manifold $M$, there are no nonconstant smooth stable $\Phi$-harmonic map $u: M \to N\, .$
\item[{\rm (b)}] The homotopic class of any map from $M$ into $N$ contains elements of arbitrarily small $\Phi$-energy.
\item[{\rm (c)}]For every compact manifold $\tilde N$, there are no nonconstant smooth stable $\Phi$-harmonic map $u: N \to \tilde N\, .$
\item[{\rm (d)}] The homotopic class of any map from $N$ into $\tilde N$ contains elements of arbitrarily small $\Phi$-energy.
\end{enumerate}\label{T:1.1}
\end{thm}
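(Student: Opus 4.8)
The plan is to run the extrinsic average variational method: combine the average second variation formulas for $E_{\Phi}$ established earlier in the paper with the negativity of the functional $\mathsf F_y$ guaranteed by the $\Phi$-SSU hypothesis of Definition \ref{D:1.3}. The recurring principle is that no single variation need lower the $\Phi$-energy, but a suitable \emph{average} over a family of variations built from the ambient Euclidean structure does, and this average collapses exactly onto $\mathsf F_y$.

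For (a), suppose toward a contradiction that $u:M\to N$ is a nonconstant smooth stable $\Phi$-harmonic map. Fix the isometric immersion $N\hookrightarrow\mathbb{R}^q$ with second fundamental form $\mathsf B$, and let $\{E_1,\dots,E_q\}$ be the standard parallel orthonormal frame of $\mathbb{R}^q$. For each $a$ let $v_a=(E_a)^{\top}$ be the tangential part of $E_a$ along $u$, a section of $u^{\ast}TN$, and let $I(v_a,v_a)$ denote the Hessian of $E_{\Phi}$ at $u$ in the direction $v_a$. Stability forces $I(v_a,v_a)\ge 0$ for each $a$, hence $\sum_{a=1}^q I(v_a,v_a)\ge 0$. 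The heart of the matter is that, after summing over $a$ and invoking the completeness relation $\sum_{a=1}^q\langle E_a,X\rangle\langle E_a,Y\rangle=\langle X,Y\rangle$ for tangent vectors $X,Y$, the average second variation reduces to an integral over $M$ of $\mathsf F_{u(x)}$ evaluated on the image vectors $du(e_i)$. Since $\mathsf F_y$ is homogeneous of degree two and strictly negative on unit vectors, $\mathsf F_{u(x)}(du(e_i))\le 0$ with equality only where $du(e_i)=0$; as $du\not\equiv 0$ the integral is strictly negative, contradicting $\sum_a I(v_a,v_a)\ge 0$. Thus $u$ is constant.

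For (c) the same mechanism applies, but now it is the domain $N\hookrightarrow\mathbb{R}^q$ that is immersed. Given $u:N\to\tilde N$, I would construct admissible variations by composing $u$ with the deformation $\pi_t(y)=\pi(y+tE_a)$, where $\pi$ is the nearest-point projection of a tubular neighborhood of $N$ onto $N$; each $u\circ\pi_t$ is homotopic to $u$. Differentiating twice at $t=0$, the first derivative yields the tangential field $(E_a)^{\top}$ on $N$ and the second derivative introduces the second fundamental form $\mathsf B$ of the domain, so that averaging $\sum_a \frac{d^2}{dt^2}E_{\Phi}(u\circ\pi_t)\big|_{t=0}$ again collapses to an integral of $\mathsf F_y$ over $N$, strictly negative whenever $u$ is nonconstant; hence no nonconstant stable $\Phi$-harmonic map issues from $N$. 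Parts (b) and (d) are then deduced by upgrading this negativity to a deformation statement: because $\mathsf F_y$ is continuous and strictly negative on the compact unit tangent sphere bundle of $N$, it is bounded above by a negative constant, so along the families above the $\Phi$-energy strictly decreases to second order, uniformly. Carrying this out on a representative of a given homotopy class produces homotopic maps of strictly smaller $\Phi$-energy, and iterating the construction drives the infimum of $\Phi$-energy in the class to zero.

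The main obstacle is the precise second-variation computation for the quartic functional $E_{\Phi}=\frac14\int_M\sigma_2(u^{\ast}h)$: one must verify that, upon summing over $\{E_a\}$, the curvature and second-fundamental-form contributions assemble exactly into the combination $4\langle\mathsf B(\mathsf x,\mathsf e_{\beta}),\mathsf B(\mathsf x,\mathsf e_{\beta})\rangle-\langle\mathsf B(\mathsf x,\mathsf x),\mathsf B(\mathsf e_{\beta},\mathsf e_{\beta})\rangle$ defining $\mathsf F_y$, with the coefficient $4$ tracing back to the degree-four dependence of $e_{\Phi}$ on $du$ (in contrast to the $2$ and $(p-2)$ appearing in the SSU and $p$-SSU functionals). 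A secondary difficulty is rendering the deformation argument for (b) and (d) rigorous, namely ensuring the energy-reducing family remains within the homotopy class and that the resulting infimum is genuinely $0$ rather than merely nonincreasing.
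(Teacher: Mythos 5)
Your overall strategy---averaging the second variation over the tangential projections of a parallel orthonormal frame of $\mathbb R^q$ and invoking the strict negativity of $\mathsf F_y$ on unit vectors---is exactly the paper's extrinsic average variational method, and your treatment of (a) and (c) follows the paper's in spirit (the paper varies along the flows $\mathsf f_t^{\mathsf v_\ell^\top}$; your nearest-point-projection variations have the same first-order variation field, which suffices in (c) where $u$ is $\Phi$-harmonic). However, the claimed reduction is not correct as stated: the averaged Hessian of the quartic functional $E_{\Phi}$ is homogeneous of degree four in $du$, so it cannot collapse onto $\sum_i \mathsf F_{u(x)}\big(du(e_i)\big)$, which is only quadratic in $du$. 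What is actually true (the paper's Lemmas 4.1 and 4.2) is an \emph{inequality}: after diagonalizing the pull-back tensor, $\sum_i u_{i\alpha}u_{i\beta}=\mathsf C_\alpha\delta_{\alpha\beta}$, the average is bounded above by $\int_M\sum_{\alpha}\mathsf C_\alpha^2\,\mathsf F_{u(x)}(\mathsf e_\alpha)\,dx$; the cross terms $2\mathsf C_\alpha\mathsf C_\beta\langle\mathsf B(\mathsf e_\alpha,\mathsf e_\beta),\mathsf B(\mathsf e_\alpha,\mathsf e_\beta)\rangle$ arising from the mixed $\langle\nabla v,du\rangle\langle du,\nabla v\rangle$ terms must first be estimated by $\mathsf C_\alpha^2+\mathsf C_\beta^2$ before the $\Phi$-SSU combination appears at all. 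The conclusion (strict negativity of the average when $du\not\equiv 0$, contradicting stability) survives, but the eigenvalue decomposition and this Cauchy--Schwarz step are the real content of the ``assembly'' you deferred to the end as an obstacle.

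The serious gap is (d). You propose running the same deformation directly on an arbitrary representative of a homotopy class of maps \emph{from} $N$, but the domain-side average second variation formula (the paper's Theorem 3.2) is derived only for $\Phi$-harmonic $u$: eliminating the Laplacian term produced by the Weitzenb\"ock formula uses the Euler--Lagrange equation (Corollary 2.2), and the second variation formula for variations of the form $u\circ f_t$ likewise requires $u$ to be $\Phi$-harmonic (unlike the target-side formula, where $\sum_\ell\nabla^N_{\mathsf v_\ell^\top}\mathsf v_\ell^\top=0$ kills the first-variation term for arbitrary $u$). An arbitrary map in a homotopy class is not $\Phi$-harmonic, so you have no uniform second-order decrease to iterate on the domain side. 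The paper takes a different route: (b) applied with $M=N$ and $u=\mathrm{Id}_N$ gives maps $\psi^{\ell}$ homotopic to the identity with $E_{\Phi}(\psi^{\ell})\to 0$ (Corollary 4.1), and a composition lemma (Lemma 4.4) then gives $E_{\Phi}(u\circ\psi^{\ell})\to 0$, using $e_{\Phi}(u\circ\psi)=\frac 14\operatorname{trace}\big((U\cdot U^T)(\Psi\cdot\Psi^T)\big)$ and the boundedness of $du$ on compact $N$; you need this device or an equivalent. Finally, for (b) itself, ``decreases to second order, uniformly'' is not yet an iteration scheme: to obtain a fixed factor $\rho<1$ per step one needs a uniform bound on the third $t$-derivative of $E_{\Phi}(\mathsf f_t^{\mathsf V}\circ u)$ (the paper's Lemma 4.3, Step 1, by compactness of $[-1,1]\times SN\times SN$), which converts infinitesimal negativity into $E_{\Phi}(u_1)\le\big(1-\tfrac{\kappa\zeta^2}{2}\big)E_{\Phi}(u)$; you flagged this but it is an essential step, not a routine afterthought.
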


The cases $(1)\, N$ is $S^n, n \ge 5$ and $(2)\, N$ is a minimal submanifold in the unit sphere with $\text{Ric}^N\geq \frac{3}{4}n$ satisfying properties $(a)$ and $(c)$ are due to S. Kawai and N. Nakauchi (cf. \cite{KN, KN2}).
These are analogs of the following: $S^n, n > 2$ is not the domain of any nonconstant stable harmonic maps into any Riemannian manifold due to Xin (\cite {9}), $S^n, n > 2$ is not the target of any nonconstant stable harmonic maps from any Riemannian manifold due to
Leung \cite {12} and Wei \cite {15}, and a minimal $k$-submanifold $N$ in the unit sphere with $\operatorname{Ric}^N > (1- \frac{1}{p}) k , p < k $ is neither the domain nor the target of any nonconstant stable $p$-harmonic maps (cf. \cite{WY}).

For brevity we call such a manifold with properties $(a)$, $(b)$, $(c)$ and $(d)$, $\Phi$-strongly unstable $(\Phi$-$\text{SU})$. That is,
\begin{defn}
A Riemannian manifold $N$ is $\Phi$-strongly unstable $(\Phi$-$\text{SU})$ if it is neither the domain nor the target of any nonconstant smooth $\Phi$-stable harmonic map, and the homotopic class of maps from or into $N$ contains a map of arbitrarily small energy.
\end{defn}
This leads to the study of the identity map on a Riemannian manifold. In particular, if $N$ is $\Phi$-$\operatorname{SU}$, then the identity map of $N$ is $\Phi$-unstable. For convenience, we make the following
\begin{defn}
A Riemannian manifold $N$ is $\Phi$-unstable $(\Phi$-$\text{U})$ if the identity map $\text{Id}_N$ on $N$ is $\Phi$-unstable.
\end{defn}
\noindent
and obtain the following results.
\begin{thm}(cf. Section 4) Let $N$ be a compact manifold. Then
\noindent
$($A$)$ $N$ is $\Phi$-$\operatorname{SSU}$ $\Rightarrow$ $($B$)$ $N$ is $\Phi$-$\operatorname{SU}$ $\Rightarrow$ $($C$)$ $N$ is $\Phi$-$\operatorname{U}\, .$
\end{thm}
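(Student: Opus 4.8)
The theorem asserts (A) $\Phi$-SSU $\Rightarrow$ (B) $\Phi$-SU $\Rightarrow$ (C) $\Phi$-U for compact $N$. The second implication is essentially a definitional specialization: if $N$ is $\Phi$-SU, then in particular $N$ is not the domain of any nonconstant stable $\Phi$-harmonic map into any manifold. Applying this to the identity map $\mathrm{Id}_N \colon N \to N$, which is a nonconstant $\Phi$-harmonic map (I would first verify that $\mathrm{Id}_N$ is indeed $\Phi$-harmonic, since it is a trivially harmonic map and the Euler--Lagrange equations for $E_\Phi$ should be satisfied), forces $\mathrm{Id}_N$ to be $\Phi$-unstable, which is exactly condition (C). So (B) $\Rightarrow$ (C) is nearly immediate once one checks that $\mathrm{Id}_N$ qualifies as a nonconstant $\Phi$-harmonic map.

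The substantive content is (A) $\Rightarrow$ (B), which is precisely the assertion of Theorem \ref{T:1.1}: a compact $\Phi$-SSU manifold satisfies properties (a), (b), (c), (d), and having those four properties \emph{is} the definition of $\Phi$-SU. Thus I would simply invoke Theorem \ref{T:1.1}, which is stated earlier in the excerpt and may be assumed. The only gap to close is that Theorem \ref{T:1.1} lists (a)--(d) while the definition of $\Phi$-SU bundles them as ``neither domain nor target of a nonconstant stable $\Phi$-harmonic map, and homotopy classes contain maps of arbitrarily small $\Phi$-energy.'' Matching these up: (a) and (c) together say $N$ is neither target nor domain of a nonconstant stable $\Phi$-harmonic map, while (b) and (d) together give the small-$\Phi$-energy property for maps into and out of $N$. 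Hence (a)--(d) is logically equivalent to the definition of $\Phi$-SU, and (A) $\Rightarrow$ (B) follows directly.

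To make the write-up self-contained, I would structure the proof as two short paragraphs. First, the implication (A) $\Rightarrow$ (B): cite Theorem \ref{T:1.1}, then observe that its conclusions (a), (b), (c), (d) are exactly the defining properties of a $\Phi$-SU manifold, so $N$ is $\Phi$-SU. Second, the implication (B) $\Rightarrow$ (C): since $N$ is $\Phi$-SU, it is not the domain of any nonconstant stable $\Phi$-harmonic map; because $\mathrm{Id}_N$ is a nonconstant $\Phi$-harmonic map (as $N$ is not a point), $\mathrm{Id}_N$ cannot be stable, hence is $\Phi$-unstable, so $N$ is $\Phi$-U by definition.

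**The main obstacle** will be the verification that $\mathrm{Id}_N$ is genuinely $\Phi$-harmonic, since this is the one nontrivial check in an otherwise definitional argument. For the standard energy functional the identity map is always harmonic, but for $E_\Phi$ one must confirm that $\mathrm{Id}_N$ satisfies the $\Phi$-harmonic Euler--Lagrange equation. For $u = \mathrm{Id}_N$ the pull-back tensor $\alpha = u^\ast h = g$ is the metric itself, so $\langle du(e_i), du(e_j)\rangle = \delta_{ij}$ and the first variation should vanish by symmetry; I would check that the relevant divergence term in the first variation formula for $E_\Phi$ indeed vanishes for the identity. Once that is in hand, both implications reduce to unwinding definitions, and no hard estimate is required --- the analytic weight of the statement lives entirely in the previously established Theorem \ref{T:1.1}.
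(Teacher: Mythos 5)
Your proposal is correct and takes essentially the same approach as the paper: (A) $\Rightarrow$ (B) is exactly Theorem 1.1 (proved in Section 4), whose conclusions (a)--(d) are by definition the $\Phi$-SU property, and (B) $\Rightarrow$ (C) applies the $\Phi$-SU property to the identity map, which is a nonconstant $\Phi$-harmonic map. The one verification you flag as the main obstacle is already the paper's Corollary 2.3, proved by the same computation you sketch: for $u=\operatorname{Id}_N$ one has $\langle du(e_i),du(e_j)\rangle=\delta_{ij}$, so the Euler--Lagrange expression $\sum_{i,j}\nabla^u_{e_i}\bigl(\langle du(e_i),du(e_j)\rangle\, du(e_j)\bigr)$ reduces to $\sum_i\nabla_{e_i}e_i$, which vanishes in a normal frame at each point.
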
\smallskip

What seems to be remarkable is that the above results $($A$)$ $\Rightarrow$ $($B$)$ $\Rightarrow$ $($C$)$ go the other way around on certain compact homogeneous spaces and are in sharp contrast to $p$-harmonic maps where there are gap phenomena that dash the hope for  $($C$)$ $\Rightarrow$ $($B$)$ $\Rightarrow$ $($A$)\, $ (cf. \cite {13}).\smallskip

\noindent
{\bf Theorem \ref{T:9.1}}
{\it Let $N=G/H$ be a compact irreducible homogeneous space of dimension $n$ with first eigenvalues $\lambda_1$ and scalar curvature $\text{Scal} ^{\, N}$. Set the following properties $($A$)$ through $($D$)$:\smallskip

\noindent
$($A$)$ $N$ is $\Phi$-$\operatorname{SSU}$.\\
$($B$)$ $N$ is $\Phi$-$\operatorname{SU}$.\\
$($C$)$ $N$ is $\Phi$-$\operatorname{U}$.\\
$($D$)$ $\lambda_1<\frac{4}{3n}\text{Scal} ^{\, N}$.\smallskip

\noindent
Then the following holds:\smallskip

\noindent$($A$)$ $\Leftrightarrow$ $($B$)$ $\Leftrightarrow$ $($C$)$ $\Leftrightarrow$ $($D$)\, .$}
\smallskip

Furthermore, we establish a link of  $\Phi$-$\operatorname{SSU}$ manifold to $p$-$\operatorname{SSU}$ manifold and topology:
\smallskip

\noindent
{\bf Theorem \ref{T: 6.1}}
{\it Every $\Phi$-$\operatorname{SSU}$ manifold is $p$-$\operatorname{SSU}$  for any $2 \le p \le 4$ and every compact $\Phi$-$\operatorname{SSU}$ manifold is $4$-connected, i.e. $\pi_1(N) = \cdots = \pi_{4}(N) = 0$.}
\smallskip

\noindent
{\bf Theorem 7.2 $($Sphere Theorem$)$}
{\it Every compact $\Phi$-$\operatorname{SSU}$ manifold with dimension $n < 10$ is homeomorphic to the $n$-sphere $S^n$.}
\smallskip

The extrinsic average variational method in the calculus of variations is in contrast to an average method in PDE that we applied in \cite {CW} to obtain sharp growth estimates for warping functions in multiply warped product
manifolds.

\section{Fundamentals of $\Phi$-harmonic Maps}

Let $u: M \to N$ be a smooth map between compact Riemannian manifolds of dimension $m$ and $n$ respectively, $T^{\ast}M$ be the cotangent bundle of $M\, ,$ and $TN$ be the tangent bundle of $N\, .$ We denote $u^{-1}TN = \{(x,v) \in M \times TN : u(x) = \pi (v) \}\, ,$ the pull-back bundle, that is the vector bundle over $M$ induced by $u$ from the tangent bundle $\pi : TN \to N\, .$ Then the differential $du$  of $u\, $ is a differentiable $1$-form with values in the pull-back bundle $u^{-1}TN\, ,$  or $du \in \Gamma (TM^{\ast} \bigotimes u^{-1}TN)\, $ is a section of the bundle $TM^{\ast} \bigotimes u^{-1}TN \to M$. To simplify the notation, let  $\langle \mathsf Y, \mathsf Z \rangle = \langle \mathsf Y, \mathsf Z \rangle _N$ for all vector fields $\mathsf Y$ and $\mathsf Z$ on $N\, ,$ and $\nabla^u$ be the pull-back connection. 
\smallskip
Choose a compactly supported  one-parameter $C^2$ family of $C^1$ maps $\Psi(\cdot, t) =u_t (\cdot), -\varepsilon< t <\varepsilon$ such that $\Psi(\cdot, 0) =u_0 (\cdot) = u(\cdot)$ and
${{du_t}\over{dt}}_{\big |_{t=0}}=v$ is $C^1$ and a two-parameter $C^1$ variations
$\Psi(\cdot,s,t)=u_{s,t}, -\varepsilon< s,t <\varepsilon$ such that
$$V={{\partial u_{s,t}}\over{\partial s}},\quad
v={{\partial u_{s,t}}\over{\partial s}}_{\big |_{(s,t)=(0,0)}},\quad
W={{\partial u_{s,t}}\over{\partial t}}\quad \text {and}\quad w=
{{\partial u_{s,t}}\over{\partial t}}_{\big |_{(s,t)=(0,0)}}.$$

\noindent We shall denote the pull-back connection
by $\nabla^{\psi}$ from $\Psi$. 

\smallskip
\begin{prop} [First variation formula for $\Phi$-energy $E_{\Phi}$]
\[\frac {d}{dt} E_{\Phi}(u_t)=- \int _M\bigg \langle V,\sum_{i,j=1}^m \nabla ^{\Psi}_{e_i} \big (\langle d\Psi (e_i),d\Psi (e_j) \rangle d\Psi (e_j) \big ) \bigg \rangle\, dx,
\]
where $\{e_1,\cdots,e_m\}$ is a local orthonormal frame field on $M$, and
$V=d\Psi(\frac {\partial}{\partial t})$.\label{P:2.1}
\end{prop}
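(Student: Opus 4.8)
The plan is to derive the first variation formula by directly differentiating the $\Phi$-energy under the integral sign and then integrating by parts, exactly as one does for the ordinary energy. I would begin by recalling from \eqref{1.1} that the $\Phi$-energy density of the family $u_t = \Psi(\cdot,t)$ is
\[
e_{\Phi}(u_t) = \frac 14 \sum_{i,j=1}^m \langle d\Psi(e_i), d\Psi(e_j)\rangle^2,
\]
so that $E_{\Phi}(u_t) = \int_M \frac 14 \sum_{i,j} \langle d\Psi(e_i), d\Psi(e_j)\rangle^2 \, dx$. Since the family is $C^2$ and compactly supported, differentiation commutes with the integral, and I would compute $\frac{d}{dt} e_{\Phi}(u_t)$ using the product rule together with the metric-compatibility of the pull-back connection $\nabla^{\Psi}$. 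The chain rule gives
\[
\frac{d}{dt} \langle d\Psi(e_i), d\Psi(e_j)\rangle^2 = 2 \langle d\Psi(e_i), d\Psi(e_j)\rangle \, \frac{d}{dt}\langle d\Psi(e_i), d\Psi(e_j)\rangle,
\]
and metric compatibility turns $\frac{d}{dt}\langle d\Psi(e_i), d\Psi(e_j)\rangle$ into $\langle \nabla^{\Psi}_{\frac{\partial}{\partial t}} d\Psi(e_i), d\Psi(e_j)\rangle + \langle d\Psi(e_i), \nabla^{\Psi}_{\frac{\partial}{\partial t}} d\Psi(e_j)\rangle$.

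The next step is to commute the $t$-derivative with the spatial derivatives. Here I would invoke the standard symmetry of the second fundamental form of a map, namely $\nabla^{\Psi}_{\frac{\partial}{\partial t}} d\Psi(e_i) = \nabla^{\Psi}_{e_i} d\Psi(\frac{\partial}{\partial t}) = \nabla^{\Psi}_{e_i} V$, which follows because $[\,\frac{\partial}{\partial t}, e_i\,] = 0$ when $e_i$ is extended trivially in $t$ and the pull-back connection is torsion-free in the relevant sense. Using the symmetry $\langle d\Psi(e_i), d\Psi(e_j)\rangle = \langle d\Psi(e_j), d\Psi(e_i)\rangle$ to combine the two terms, the integrand becomes
\[
\frac{d}{dt} e_{\Phi}(u_t) = \sum_{i,j=1}^m \langle d\Psi(e_i), d\Psi(e_j)\rangle \, \langle \nabla^{\Psi}_{e_i} V, d\Psi(e_j)\rangle.
\]

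Finally I would integrate by parts to move the derivative off of $V$. Introducing the one-form $\omega$ defined by $\omega(e_i) = \langle V, \langle d\Psi(e_i), d\Psi(e_j)\rangle d\Psi(e_j)\rangle$ (summed over $j$), the divergence theorem on the compact manifold $M$ kills the boundary term because $V$ has compact support, and regrouping yields
\[
\frac{d}{dt} E_{\Phi}(u_t) = - \int_M \Big\langle V, \sum_{i,j=1}^m \nabla^{\Psi}_{e_i}\big( \langle d\Psi(e_i), d\Psi(e_j)\rangle d\Psi(e_j)\big)\Big\rangle \, dx,
\]
which is the asserted formula. The main technical obstacle I anticipate is the bookkeeping in the integration by parts: one must verify that the divergence of the auxiliary vector field whose metric dual is $\omega$ indeed splits into the term containing $\nabla^{\Psi}_{e_i} V$ (which matches what appears in the differentiated density) plus the term with $\nabla^{\Psi}_{e_i}$ landing on the bracket factor $\langle d\Psi(e_i), d\Psi(e_j)\rangle d\Psi(e_j)$. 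Choosing the frame $\{e_i\}$ to be geodesic (normal) at the point of evaluation, so that $\nabla_{e_i} e_i = 0$ there, removes the extraneous Christoffel-type terms and makes this step clean; this is the only place where care is genuinely required.
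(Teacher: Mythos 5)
Your proposal is correct and follows essentially the same route as the paper's own proof: differentiate the density $\tfrac14\sum_{i,j}\langle d\Psi(e_i),d\Psi(e_j)\rangle^2$ under the integral, use $[\tfrac{\partial}{\partial t},e_i]=0$ to replace $\nabla^{\Psi}_{\frac{\partial}{\partial t}}d\Psi(e_i)$ by $\nabla^{\Psi}_{e_i}V$, and then integrate by parts via the divergence theorem. Your closing remark about choosing a normal frame is exactly the content of the paper's identity \eqref{2.50}, $e_i(f)=\operatorname{div}_M(fe_i)$ at a point, so no gap remains.
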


\begin{proof}
Since the Lie bracket $[\frac {\partial}{\partial t},e_i]  = 0\, ,$
\begin{equation}\label{2.1}
\begin{aligned}
\nabla ^{\Psi}_{\frac {\partial}{\partial t}}\big ( d\Psi (e_i) \big )
& =  \nabla ^{\Psi}_{e_i}\,  \big (d\Psi (\frac {\partial}{\partial t})\big ) + d\Psi ( [\frac {\partial}{\partial t},e_i] )\\
& = \nabla ^{\Psi}_{e_i}\,  V\, .
\end{aligned}
\end{equation}
Since for every $f \in C^{\infty} (M)\, , $
 \begin{equation}\label{2.50}e_i (f) = \langle \text{grad}_M f, e_i \rangle _M = \text{div}_M (f e_i)
 \end{equation} at a point in $M\, ,$ \eqref{1.1}, \eqref{2.1} and \eqref{2.50} imply that
 \begin{equation}\label{2.2}
\begin{aligned}
\frac {\partial} {\partial t}  e_{\Phi}(u_t) & = \frac 14 \frac {\partial} {\partial t}  \sum_{i,j=1}^m\langle d\Psi (e_i),d\Psi (e_j)\rangle ^2\\
& = \sum_{i,j=1}^m \langle d\Psi (e_i),d\Psi (e_j) \rangle \langle \nabla ^{\Psi}_{\frac {\partial}{\partial t}}\big ( d\Psi (e_i) \big ), d\Psi (e_j) \rangle \\
& = \sum_{i,j=1}^m \langle \nabla ^{\Psi}_{e_i} V, d\Psi (e_j) \rangle \langle d\Psi (e_i),d\Psi (e_j) \rangle \\
&  =\sum_{i=1}^m \bigg \langle \nabla ^{\Psi}_{e_i}\,V,\sum_{j=1}^m \langle d\Psi (e_i),d\Psi (e_j) \rangle d\Psi (e_j) \bigg \rangle \\
& =\sum_{i=1}^me_i \bigg \langle V,\sum_{j=1}^m \langle d\Psi (e_i),d\Psi (e_j) \rangle d\Psi (e_j) \bigg \rangle\\
&\quad \quad -\sum_{i=1}^m \bigg \langle \,V,\sum_{j=1}^m \nabla ^{\Psi}_{e_i} \big (\langle d\Psi (e_i),d\Psi (e_j) \rangle d\Psi (e_j) \big ) \bigg \rangle  \\
& =\text{div}_M \bigg ( \sum_{i=1}^m\bigg \langle V,\sum_{j=1}^m \langle d\Psi (e_i),d\Psi (e_j) \rangle d\Psi (e_j) \bigg \rangle e_i \bigg )\\
& \quad \quad- \bigg \langle \,V,\sum_{i,j=1}^m \nabla ^{\Psi}_{e_i} \big (\langle d\Psi (e_i),d\Psi (e_j) \rangle d\Psi (e_j) \big ) \bigg \rangle.
\end{aligned}
\end{equation}
Since $\frac {d}{dt} E_{\Phi}(u_t) = \int_M \frac {\partial} {\partial t}e_{\Phi}(u_t)\, dx\, ,$ integrating both sides of  \eqref{2.2} and applying the divergence theorem, we
obtain the desired.
\end{proof}
\begin{cor}$($\cite {KN}$)$
\begin{eqnarray}\label{2.3}
\qquad \frac{dE_{\Phi}(u_t)}{dt}_{\big |_{t=0}} = - \int _M   \bigg \langle \,  v,\sum_{i,j=1}^m \nabla ^{u}_{e_i} \big (\langle  du (e_i),du (e_j) \rangle du (e_j) \big ) \bigg \rangle\, dx.
\end{eqnarray}
\label{C:2.1}
\end{cor}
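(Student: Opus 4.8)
The plan is to read off this corollary as the value at $t=0$ of the first variation formula already established in Proposition \ref{P:2.1}. That integral identity holds for every $t \in (-\varepsilon,\varepsilon)$, with the integrand assembled from the differential $d\Psi$, the pull-back connection $\nabla^{\Psi}$, and the variation field $V = d\Psi(\frac{\partial}{\partial t})$, all taken along the one-parameter family $u_t = \Psi(\cdot,t)$. So the whole task reduces to restricting that identity to the slice $t=0$ and matching notation.

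First I would specialize to $t=0$. By construction $\Psi(\cdot,0) = u(\cdot) = u$, so along this slice the differential $d\Psi$ restricts to $du$ and the pull-back connection $\nabla^{\Psi}$ restricts to the pull-back connection $\nabla^{u}$ along $u$. Moreover $V\big|_{t=0} = d\Psi(\frac{\partial}{\partial t})\big|_{t=0} = \frac{du_t}{dt}\big|_{t=0} = v$, by the very definition of $v$. Substituting these restrictions into the integrand of Proposition \ref{P:2.1} converts each factor of $\langle V, \sum_{i,j}\nabla^{\Psi}_{e_i}(\langle d\Psi(e_i), d\Psi(e_j)\rangle d\Psi(e_j))\rangle$ into the corresponding factor of $\langle v, \sum_{i,j}\nabla^{u}_{e_i}(\langle du(e_i), du(e_j)\rangle du(e_j))\rangle$, which is precisely the integrand of \eqref{2.3}.

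There is essentially no obstacle here: the genuine analytic content — differentiation under the integral sign, the divergence theorem, and the commutation $\nabla^{\Psi}_{\partial/\partial t}\, d\Psi(e_i) = \nabla^{\Psi}_{e_i} V$ used in \eqref{2.1} — has already been discharged in proving Proposition \ref{P:2.1}, and the regularity assumed there ($C^2$ in $t$, $C^1$ in the space variables) guarantees that restriction to $t=0$ is legitimate. The only point meriting an explicit remark is the identification $V\big|_{t=0} = v$, after which \eqref{2.3} is immediate.
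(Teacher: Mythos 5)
Your proposal is correct and matches the paper's (implicit) argument exactly: the corollary is obtained by evaluating the first variation formula of Proposition \ref{P:2.1} at $t=0$, where $d\Psi$ restricts to $du$, $\nabla^{\Psi}$ to $\nabla^{u}$, and $V\big|_{t=0}=v$. The paper states this as an immediate consequence without further proof, and your identification of $V\big|_{t=0}=v$ as the only point needing comment is apt.
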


\begin{cor}\label{C:2.2}
A smooth map $u$ is $\Phi$-harmonic if and only if $u$ satisfies \begin{equation}\label{2.4}
\sum_{i,j=1}^m \nabla ^{u}_{e_i} \big (\langle  du (e_i),du (e_j) \rangle du (e_j) \big ) = 0.\end{equation}
\end{cor}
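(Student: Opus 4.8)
The plan is to deduce this Euler--Lagrange characterization directly from the first variation formula in Corollary \ref{C:2.1}. That formula gives, for any smooth compactly supported variation $u_t$ with variation field $v=\frac{du_t}{dt}\big|_{t=0}\in\Gamma(u^{-1}TN)$,
\[
\frac{dE_{\Phi}(u_t)}{dt}\Big|_{t=0} = -\int_M \Big\langle v,\ \sum_{i,j=1}^m \nabla^{u}_{e_i}\big(\langle du(e_i),du(e_j)\rangle\, du(e_j)\big)\Big\rangle\, dx.
\]
Abbreviating the bracketed section by $\tau_{\Phi}(u):=\sum_{i,j=1}^m \nabla^{u}_{e_i}\big(\langle du(e_i),du(e_j)\rangle\, du(e_j)\big)$, the definition of $\Phi$-harmonicity is exactly that $\int_M\langle v,\tau_{\Phi}(u)\rangle\,dx=0$ for every such $v$.

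One direction is immediate. If $u$ satisfies \eqref{2.4}, then $\tau_{\Phi}(u)=0$, so the integrand above vanishes for every variation field $v$; hence $\frac{d}{dt}E_{\Phi}(u_t)\big|_{t=0}=0$ for all smooth compactly supported variations, and $u$ is by definition a critical point of $E_{\Phi}$, i.e.\ $\Phi$-harmonic.

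For the converse I would supply two standard ingredients. First, every smooth compactly supported section $v$ of $u^{-1}TN$ is realizable as a variation field: given $v$, define $u_t(x)=\exp^{N}_{u(x)}(t\,v(x))$ via the exponential map of $N$, a $C^2$ family that equals $u$ off the support of $v$ and satisfies $\frac{d}{dt}u_t\big|_{t=0}=v$. Thus $\Phi$-harmonicity yields $\int_M\langle v,\tau_{\Phi}(u)\rangle\,dx=0$ for all compactly supported $v$. Second, I invoke the fundamental lemma of the calculus of variations: working in a local trivializing frame for $u^{-1}TN$ and choosing $v$ to be a bump function times a frame vector supported near any hypothetical point where $\tau_{\Phi}(u)\neq0$ produces a strictly signed integral, a contradiction. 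Hence $\tau_{\Phi}(u)\equiv0$, which is \eqref{2.4}.

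I expect no serious obstacle here: the entire analytic content is already packaged in Corollary \ref{C:2.1}, and the remaining steps---realizing $v$ through the exponential map and localizing via the fundamental lemma---are routine once one passes to normal coordinates on $N$ and a trivialization of the pull-back bundle. The only mild care needed is to ensure the comparison variations are genuinely compactly supported and $C^2$ in $t$, which the exponential-map construction guarantees.
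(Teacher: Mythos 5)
Your proposal is correct and follows the same route as the paper: the paper's proof of Corollary \ref{C:2.2} is simply the observation that it ``follows at once from the definition of $\Phi$-harmonic map and Proposition \ref{P:2.1}'' (the first variation formula). You have merely made explicit the standard details the paper leaves implicit---realizing an arbitrary compactly supported section of $u^{-1}TN$ as a variation field via the exponential map, and localizing with the fundamental lemma of the calculus of variations---both of which are sound.
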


\begin{proof} This follows at once from the definition of $\Phi$-harmonic map and Proposition \ref{P:2.1}, the first variation formula of $\Phi$-energy.
\end{proof}

\begin{cor}[Example of $\Phi$-harmonic]
The identity map on any Riemannian manifold is $\Phi$-harmonic.  \label{C:2.3}
\end{cor}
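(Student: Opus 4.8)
The plan is to verify that the identity map $\text{Id}_M$ satisfies the $\Phi$-harmonic equation \eqref{2.4} of Corollary \ref{C:2.2}, i.e.\ that the $\Phi$-tension field $\sum_{i,j=1}^m \nabla^u_{e_i}\big(\langle du(e_i),du(e_j)\rangle du(e_j)\big)$ vanishes identically. First I would record the simplifications that occur when $u=\text{Id}_M$: the pull-back bundle $u^{-1}TM$ is just $TM$, the pull-back connection $\nabla^u$ coincides with the Levi-Civita connection $\nabla$ of $M$, and the differential $du$ is the identity on each tangent space, so that $du(e_i)=e_i$ for any frame. Consequently the coefficients become $\langle du(e_i),du(e_j)\rangle = \langle e_i,e_j\rangle = \delta_{ij}$ in an orthonormal frame.

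Next I would observe that, by the first variation formula (Proposition \ref{P:2.1}), the expression in \eqref{2.4} is the well-defined Euler--Lagrange section associated with $E_{\Phi}$ and hence is independent of the choice of local orthonormal frame. This lets me compute its value at an arbitrary but fixed point $x_0 \in M$ using whichever frame is most convenient. I would choose a geodesic normal orthonormal frame $\{e_1,\dots,e_m\}$ centered at $x_0$, so that $\nabla_{e_i}e_j(x_0)=0$ for all $i,j$.

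With this frame the computation is immediate. Applying the Leibniz rule for $\nabla$ acting on the product of the scalar $\langle e_i,e_j\rangle$ with the vector $e_j$, and using $\langle du(e_i),du(e_j)\rangle = \delta_{ij}$, gives $\nabla_{e_i}\big(\delta_{ij}e_j\big) = e_i(\delta_{ij})e_j + \delta_{ij}\nabla_{e_i}e_j$. The first term vanishes because $\delta_{ij}$ is constant (equivalently $e_i\langle e_i,e_j\rangle = \langle \nabla_{e_i}e_i,e_j\rangle + \langle e_i,\nabla_{e_i}e_j\rangle = 0$ at $x_0$), and the second vanishes because $\nabla_{e_i}e_j(x_0)=0$. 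Summing over $i,j$ shows the $\Phi$-tension field is zero at $x_0$. Since $x_0$ was arbitrary, \eqref{2.4} holds everywhere and $\text{Id}_M$ is $\Phi$-harmonic.

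As for difficulty, there is essentially no obstacle here; the only points requiring care are the two structural identifications in the first step (that $\nabla^u=\nabla$ and $du=\text{Id}$ for the identity map) and the frame-independence remark that legitimizes evaluating at each point in a normal frame. Both are routine, so the statement follows from a one-line pointwise computation.
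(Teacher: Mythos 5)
Your proof is correct and takes essentially the same route as the paper: substitute $du(e_i)=e_i$ into the $\Phi$-harmonic equation \eqref{2.4} of Corollary \ref{C:2.2} and observe that the resulting sum vanishes. You are in fact slightly more careful than the paper, whose proof simply writes $\sum_{i=1}^m \nabla^{Id}_{e_i}\big(\sum_{j=1}^m \delta_{ij} e_j\big)=0$ without remarking that $\sum_i \nabla_{e_i}e_i$ need not vanish for an arbitrary orthonormal frame; your observation that the Euler--Lagrange expression is frame-independent, followed by evaluation in a geodesic normal frame at each point, supplies exactly the justification the paper leaves implicit.
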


\begin{proof} If $u$ is the identity map on $M$, then $N=M, m=n$, $du(e_i) = e_i\, ,$ and 
$$\sum_{i,j=1}^m \nabla ^{u}_{e_i} \big (\langle  du (e_i),du (e_j) \rangle du (e_j) \big ) = \sum_{i=1}^m  \nabla ^{Id}_{e_i}  ( \sum_{j=1}^m  \delta _{ij} e_j  ) = 0\, .$$
Consequently, $u$ is $\Phi$-harmonic by Corollary \ref{C:2.2}.
\end{proof}

\begin{prop}  $\big ($The second variation formula of two parameters for  $\Phi$-energy $E_{\Phi}$($u$ is not necessary $\Phi$-harmonic)$\big )$
\begin{equation}\label{2.5}
\begin{aligned}
& \frac {\partial ^2} {\partial s \partial t}  E_{\Phi}(u_{s,t}) \bigg ( = \frac 14 \frac {\partial ^2} {\partial s \partial t}  \int _M \sum_{i,j=1}^m\langle d\Psi (e_i),d\Psi (e_j)\rangle ^2\, dx \bigg)\\
& =  \int _M \sum_{i,j=1}^m \langle \nabla ^{\Psi}_{e_i} V , d\Psi (e_j) \rangle \langle \nabla ^{\Psi}_{e_i} W, d\Psi (e_j) \rangle\, dx\\
 &\quad+ \int _M \sum_{i,j=1}^m \langle  d\Psi (e_i), \nabla ^{\Psi}_{e_j} W  \rangle \langle \nabla ^{\Psi}_{e_i} V, d\Psi (e_j) \rangle\, dx\\
& \quad + \int_M  \sum_{i,j=1}^m \langle  d\Psi (e_i) , d\Psi (e_j) \rangle \langle \nabla ^{\Psi}_{e_i} V, \nabla ^{\Psi}_{e_j} W\rangle \, dx \\
& \quad -  \int _M \bigg \langle \,  \nabla ^{\Psi}_{\frac {\partial}{\partial s}}  V,\sum_{i,j=1}^m \nabla ^{\Psi}_{e_i} \big (\langle  d\Psi (e_i),d\Psi (e_j) \rangle d\Psi (e_j) \big ) \bigg \rangle\, dx \\
& \quad  + \int _M \sum_{i=1}^m  \bigg \langle R^{N} \big (V, d\Psi (e_i) \big ) W, \sum_{j=1}^m \langle  d\Psi (e_i) , d\Psi (e_j) \rangle d\Psi (e_j) \bigg \rangle \, dx,
\end{aligned}
\end{equation}
where $R^N$ is the curvature tensor of $N$, and $\langle R(x,y)y, x \rangle $ denotes the sectional curvature of the plane spanned by $\{x,y\}\, .$\label{P:2.2}
\end{prop}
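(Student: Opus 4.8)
The plan is to obtain the formula by differentiating the first variation one more time, recycling the pointwise computation already carried out in \eqref{2.2}. First I would record the one-parameter first variation in its \emph{un}-integrated-by-parts form: running the computation of \eqref{2.2} with the $s$-variation in place of the $t$-variation and stopping before the divergence step, I get
\[
\frac{\partial}{\partial s} E_{\Phi}(u_{s,t}) = \int_M \sum_{i,j=1}^m \langle d\Psi(e_i), d\Psi(e_j)\rangle\, \langle \nabla^{\Psi}_{e_i} V, d\Psi(e_j)\rangle \, dx ,
\]
where $V = d\Psi(\partial/\partial s)$; this is justified exactly as in \eqref{2.2}, where after metric compatibility and the relabeling $i\leftrightarrow j$ the two symmetric halves of $\partial_s \langle d\Psi(e_i),d\Psi(e_j)\rangle^2$ coincide. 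Everything then reduces to differentiating this integrand in $t$, which I may do under the integral sign since the variation is $C^1$ and $M$ is compact.

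Second, I would apply $\nabla^{\Psi}_{\partial/\partial t}$ to the integrand by the product rule and metric compatibility, so the derivative falls in turn on each of the four vectors that occur: the two inside $\langle d\Psi(e_i),d\Psi(e_j)\rangle$ and the two inside $\langle \nabla^{\Psi}_{e_i}V, d\Psi(e_j)\rangle$. The key computational fact throughout is the commutation $\nabla^{\Psi}_{\partial/\partial t} d\Psi(e_i) = \nabla^{\Psi}_{e_i} W$ coming from $[\partial/\partial t, e_i]=0$ (this is \eqref{2.1} with $W = d\Psi(\partial/\partial t)$). Letting the derivative hit $d\Psi(e_i)$ in the first factor produces the first stated term $\langle \nabla^{\Psi}_{e_i}V,d\Psi(e_j)\rangle\langle\nabla^{\Psi}_{e_i}W,d\Psi(e_j)\rangle$; hitting $d\Psi(e_j)$ in the first factor gives the second term $\langle d\Psi(e_i),\nabla^{\Psi}_{e_j}W\rangle\langle\nabla^{\Psi}_{e_i}V,d\Psi(e_j)\rangle$; and hitting $d\Psi(e_j)$ in the second factor gives the third term $\langle d\Psi(e_i),d\Psi(e_j)\rangle\langle\nabla^{\Psi}_{e_i}V,\nabla^{\Psi}_{e_j}W\rangle$. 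These three require only the symmetry of $\langle d\Psi(e_i),d\Psi(e_j)\rangle$ in $i,j$.

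Third, and this is where the real work lies, I must treat the remaining contribution, in which the derivative hits $\nabla^{\Psi}_{e_i}V$, namely $\sum_{i,j}\langle d\Psi(e_i),d\Psi(e_j)\rangle\langle \nabla^{\Psi}_{\partial/\partial t}\nabla^{\Psi}_{e_i}V, d\Psi(e_j)\rangle$. Here I would commute covariant derivatives through the curvature identity $\nabla^{\Psi}_{\partial/\partial t}\nabla^{\Psi}_{e_i}V = \nabla^{\Psi}_{e_i}\nabla^{\Psi}_{\partial/\partial t}V + R^{\Psi}(\partial/\partial t,e_i)V$, with $R^{\Psi}(\partial/\partial t,e_i)V = R^N(W,d\Psi(e_i))V$. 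The curvature piece is $\sum_{i,j}\langle d\Psi(e_i),d\Psi(e_j)\rangle\langle R^N(W,d\Psi(e_i))V, d\Psi(e_j)\rangle$, and relabeling $i\leftrightarrow j$ together with the pair symmetry $\langle R(X,Y)Z,U\rangle=\langle R(Z,U)X,Y\rangle$ of $R^N$ rewrites it as the stated last term $\sum_i \langle R^N(V,d\Psi(e_i))W,\ \sum_j \langle d\Psi(e_i),d\Psi(e_j)\rangle d\Psi(e_j)\rangle$. I expect this curvature bookkeeping to be the main obstacle: arranging the arguments into the order $(V,d\Psi(e_i))W$ requires combining antisymmetry, the pair symmetry, and the index swap in the right sequence, and a sign slip is easy.

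Finally, for the leftover $\sum_{i,j}\langle d\Psi(e_i),d\Psi(e_j)\rangle\langle \nabla^{\Psi}_{e_i}\nabla^{\Psi}_{\partial/\partial t}V, d\Psi(e_j)\rangle$, I would use the symmetry of the mixed second derivative, $\nabla^{\Psi}_{\partial/\partial t}V = \nabla^{\Psi}_{\partial/\partial s}W$ (torsion-freeness of $\nabla^{\Psi}$ together with $[\partial/\partial s,\partial/\partial t]=0$), and then integrate by parts over $M$: writing $\sigma_i = \sum_j \langle d\Psi(e_i),d\Psi(e_j)\rangle d\Psi(e_j)$, the summand is $\langle \nabla^{\Psi}_{e_i}(\nabla^{\Psi}_{\partial/\partial s}W),\sigma_i\rangle$, and the divergence theorem on the compact $M$ discards the total-divergence term and shifts $\nabla^{\Psi}_{e_i}$ onto $\sigma_i$, producing the fourth stated term $-\int_M \langle \nabla^{\Psi}_{\partial/\partial s}W,\ \sum_{i,j}\nabla^{\Psi}_{e_i}(\langle d\Psi(e_i),d\Psi(e_j)\rangle d\Psi(e_j))\rangle\, dx$. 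Collecting the five contributions yields the asserted identity.
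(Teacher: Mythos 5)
Your proposal is correct and takes essentially the same route as the paper: the paper likewise expands $\frac{\partial^2}{\partial s\,\partial t}e_{\Phi}$ by the product rule (its \eqref{2.8}), commutes the two covariant derivatives through the pull-back curvature identity (its \eqref{2.6}), and integrates by parts via the divergence theorem (its \eqref{2.7}); your only organizational difference---differentiating the un-integrated first variation in $s$ and then in $t$, rather than differentiating the density twice---is immaterial, since $\nabla^{\Psi}_{\partial/\partial t}\nabla^{\Psi}_{e_i}V=\nabla^{\Psi}_{\partial/\partial t}\nabla^{\Psi}_{\partial/\partial s}\big(d\Psi(e_i)\big)$.

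One caveat, which is not a flaw in your argument: your derivation produces $-\int_M\big\langle \nabla^{\Psi}_{\partial/\partial s}W,\;\sum_{i,j=1}^m\nabla^{\Psi}_{e_i}\big(\langle d\Psi(e_i),d\Psi(e_j)\rangle d\Psi(e_j)\big)\big\rangle\,dx$ (equivalently, with $\nabla^{\Psi}_{\partial/\partial t}V$), which is not literally the $\nabla^{\Psi}_{\partial/\partial s}V$ printed in \eqref{2.5}. Your version is the correct one: the printed $V$ in that term is a notational slip inherited from \eqref{2.1}, where $V$ denotes $d\Psi(\partial/\partial t)$ in the one-parameter notation of Proposition \ref{P:2.1} rather than the two-parameter $V=d\Psi(\partial/\partial s)$. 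The paper's own proof, and the way the term is discarded in Corollary \ref{C:2.6} (it vanishes along geodesic variations precisely because $\nabla^{u}_{\partial/\partial t}V\equiv 0$ there), confirm that the intended vector is the mixed second derivative $\nabla^{\Psi}_{\partial/\partial t}V=\nabla^{\Psi}_{\partial/\partial s}W$, exactly as you derived.
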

\begin{proof}
By \eqref{2.1},
\begin{equation}\label{2.6}
\begin{aligned}
&\langle \nabla ^{\Psi}_{\frac {\partial}{\partial s}} \nabla ^{\Psi}_{\frac {\partial}{\partial t}}\big ( d\Psi (e_i) \big ),  d\Psi (e_j) \rangle\\
 & = \langle \nabla ^{\Psi}_{\frac {\partial}{\partial s}} \nabla ^{\Psi}_{e_i} V,  d\Psi (e_j) \rangle   \\
&  = \langle \nabla ^{\Psi}_{e_i} \nabla ^{\Psi}_{\frac {\partial}{\partial s}} V,  d\Psi (e_j) \rangle +
 \langle R^{N} \big (d\Psi ({\frac {\partial}{\partial s}} ), d\Psi (e_i)\big ) V,  d\Psi (e_j) \rangle \\
&  = \langle \nabla ^{\Psi}_{e_i} \nabla ^{\Psi}_{\frac {\partial}{\partial s}} V,  d\Psi (e_j) \rangle +
 \langle R^{N} \big (V, d\Psi (e_j)\big ) W,  d\Psi (e_i) \rangle.
\end{aligned}
\end{equation}
This via \eqref{2.50} implies
\begin{equation}\label{2.7}
\begin{aligned}
&  \sum_{i,j=1}^m \langle  d\Psi (e_i) , d\Psi (e_j) \rangle \langle \nabla ^{\Psi}_{\frac {\partial}{\partial s}} \nabla ^{\Psi}_{\frac {\partial}{\partial t}}\big ( d\Psi (e_i) \big ),  d\Psi (e_j) \rangle \\
& =   \sum_{i,j=1}^m \langle  d\Psi (e_i) , d\Psi (e_j) \rangle \big ( \langle \nabla ^{\Psi}_{e_i} \nabla ^{\Psi}_{\frac {\partial}{\partial s}} V,  d\Psi (e_j) \rangle +
 \langle R^{N} \big (V, d\Psi (e_j)\big ) W,  d\Psi (e_i) \rangle \big )\\
 & =  \sum_{i,=1}^m   \bigg \langle \nabla ^{\Psi}_{e_i} \nabla ^{\Psi}_{\frac {\partial}{\partial s}} V ,  \sum_{j=1}^m \langle  d\Psi (e_i) , d\Psi (e_j) \rangle d\Psi (e_j) \bigg \rangle\\
 &\quad+ \sum_{i=1}^m  \bigg< R^{N} \big (V, d\Psi (e_i)\big ) W, \sum_{j=1}^m \langle  d\Psi (e_i) , d\Psi (e_j) \rangle d\Psi (e_j) \bigg> \\
& = \sum_{i=1}^m  e_i \bigg<   \nabla ^{\Psi}_{\frac {\partial}{\partial s}}  V,\sum_{j=1}^m \langle  d\Psi (e_i),d\Psi (e_j) \rangle d\Psi (e_j) \bigg> \\
&\quad-  \sum_{i=1}^m  \bigg< \,  \nabla ^{\Psi}_{\frac {\partial}{\partial s}}  V,\sum_{j=1}^m \nabla ^{\Psi}_{e_i} \big (\langle  d\Psi (e_i),d\Psi (e_j) \rangle d\Psi (e_j) \big ) \bigg> \\
& \quad +
\sum_{i=1}^m  \bigg< R^{N} \big (V, d\Psi (e_i)\big ) W, \sum_{j=1}^m \langle  d\Psi (e_i) , d\Psi (e_j) \rangle d\Psi (e_j) \bigg> \\
& =  \text{div}_M \bigg ( \sum_{i=1}^m  \bigg {\langle}  \nabla ^{\Psi}_{\frac {\partial}{\partial s}} V,\sum_{j=1}^m \langle  d\Psi (e_i),d\Psi (e_j) \rangle d\Psi (e_j) \bigg {\rangle} e_i \bigg ) \\
& \quad -   \bigg<  \,  \nabla ^{\Psi}_{\frac {\partial}{\partial s}} V,\sum_{i,j=1}^m \nabla ^{\Psi}_{e_i} \big (\langle  d\Psi (e_i),d\Psi (e_j) \rangle d\Psi (e_j) \big ) \bigg> \\
& \quad +
\sum_{i=1}^m  \bigg< R^{N} \big (V, d\Psi (e_i)\big ) W, \sum_{j=1}^m \langle  d\Psi (e_i) , d\Psi (e_j) \rangle d\Psi (e_j) \bigg>.
\end{aligned}
\end{equation}

In view of \eqref{2.1} and \eqref{2.7} we have

\begin{equation}\label{2.8}
\begin{aligned}
& \frac {\partial ^2} {\partial s \partial t}  e_{\Phi}(u_{s,t}) \\
& = \frac 14 \frac {\partial ^2} {\partial s \partial t}  \sum_{i,j=1}^m\langle d\Psi (e_i),d\Psi (e_j)\rangle ^2\\
& = \frac {\partial } {\partial s} \sum_{i,j=1}^m \langle d\Psi (e_i),d\Psi (e_j) \rangle \langle \nabla ^{\Psi}_{\frac {\partial}{\partial t}}\big ( d\Psi (e_i) \big ), d\Psi (e_j) \rangle  \\
& =  \sum_{i,j=1}^m \langle \nabla ^{\Psi}_{\frac {\partial}{\partial s}}\big ( d\Psi (e_i) \big ), d\Psi (e_j) \rangle \langle \nabla ^{\Psi}_{\frac {\partial}{\partial t}}\big ( d\Psi (e_i) \big ), d\Psi (e_j) \rangle \\
&\quad  +  \sum_{i,j=1}^m \langle d\Psi (e_i), \nabla ^{\Psi}_{\frac {\partial}{\partial s}}\big ( d\Psi (e_j) \big ) \rangle \langle \nabla ^{\Psi}_{\frac {\partial}{\partial t}}\big ( d\Psi (e_i) \big ), d\Psi (e_j) \rangle \\
& \quad +   \sum_{i,j=1}^m \langle  d\Psi (e_i) , d\Psi (e_j) \rangle \langle \nabla ^{\Psi}_{\frac {\partial}{\partial t}}\big ( d\Psi (e_i) \big ), \nabla ^{\Psi}_{\frac {\partial}{\partial s}} \big ( d\Psi (e_j) \big )\rangle\\
& \quad  +   \sum_{i,j=1}^m \langle  d\Psi (e_i) , d\Psi (e_j) \rangle \langle \nabla ^{\Psi}_{\frac {\partial}{\partial s}} \nabla ^{\Psi}_{\frac {\partial}{\partial t}}\big ( d\Psi (e_i) \big ),  d\Psi (e_j) \rangle \\
& =  \sum_{i,j=1}^m \langle \nabla ^{\Psi}_{e_i} V , d\Psi (e_j) \rangle \langle \nabla ^{\Psi}_{e_i} W, d\Psi (e_j) \rangle + \sum_{i,j=1}^m \langle  d\Psi (e_i), \nabla ^{\Psi}_{e_j} W  \rangle \langle \nabla ^{\Psi}_{e_i} V, d\Psi (e_j) \rangle\\
& \quad+   \sum_{i,j=1}^m \langle  d\Psi (e_i) , d\Psi (e_j) \rangle \langle \nabla ^{\Psi}_{e_i} V, \nabla ^{\Psi}_{e_j} W\rangle\\
  &\quad+  \text{div}_M \bigg ( \sum_{i=1}^m  \bigg \langle  \nabla ^{\Psi}_{\frac {\partial}{\partial s}} V,\sum_{j=1}^m \langle  d\Psi (e_i),d\Psi (e_j) \rangle d\Psi (e_j) \bigg \rangle e_i \bigg ) \\
\end{aligned}
\end{equation}
 \begin{equation*}
\begin{aligned}
& \quad -  \bigg< \,  \nabla ^{\Psi}_{\frac {\partial}{\partial s}}  V,\sum_{i,j=1}^m \nabla ^{\Psi}_{e_i} \big (\langle  d\Psi (e_i),d\Psi (e_j) \rangle d\Psi (e_j) \big ) \bigg> \\
& \quad +
\sum_{i=1}^m  \left < R^{N} \big (V, d\Psi (e_i) \big ) W, \sum_{j=1}^m \langle  d\Psi (e_i) , d\Psi (e_j) \rangle d\Psi (e_j) \right > .\\
\end{aligned}
\end{equation*}

Since $ \frac {\partial ^2} {\partial s \partial t}  E_{\Phi}(u_{s,t}) = \int _M \frac {\partial ^2} {\partial s \partial t}  e_{\Phi}(u_{s,t}) \, dx\, ,$ integrating both sides of \eqref{2.8} and using the divergence theorem, we obtain the desired.
\end{proof}

As an immediate consequence, we obtain the following.
\begin{cor}  $\big ($Two parameter variation formula of $\Phi$-energy $E_{\Phi}$($u$ is not necessary $\Phi$-harmonic)$\big )$
\begin{equation}\label{2.9}
\begin{aligned}
& \frac {\partial ^2} {\partial s \partial t}  E_{\Phi}(u_{s,t})_{\big |_{(s,t)=(0,0)}}  \bigg ( = \frac 14 \frac {\partial ^2} {\partial s \partial t}  \int _M \sum_{i,j=1}^m\langle d\Psi (e_i),d\Psi (e_j)\rangle ^2\, dx_{\big |_{(s,t)=(0,0)}}  \bigg)\\
& =  \int _M \sum_{i,j=1}^m \langle \nabla ^{u}_{e_i} v , du (e_j) \rangle \langle \nabla ^{u}_{e_i} w, du (e_j) \rangle\, dx\\
 &\quad+ \int _M \sum_{i,j=1}^m \langle  du (e_i), \nabla ^{u}_{e_j} w  \rangle \langle \nabla ^{u}_{e_i} v, du (e_j) \rangle\, dx\\
& \quad + \int_M  \sum_{i,j=1}^m \langle  du (e_i) , du (e_j) \rangle \langle \nabla ^{u}_{e_i} v, \nabla ^{u}_{e_j} w\rangle \, dx\\
 &\quad-  \int _M \bigg< \,  \nabla ^{u}_{\frac {\partial}{\partial s}}  v,\sum_{i,j=1}^m \nabla ^{u}_{e_i} \big (\langle  du (e_i),du (e_j) \rangle du (e_j) \big ) \bigg>\, dx \\
& \quad + \int _M \sum_{i=1}^m  \bigg< R^{N} \big (v, du (e_i) \big ) w, \sum_{j=1}^m \langle  du (e_i) , du (e_j) \rangle du (e_j) \bigg>\, dx.\\
\end{aligned}
\end{equation}\label{C:2.4}
\end{cor}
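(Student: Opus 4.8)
The plan is to obtain \eqref{2.9} as the pointwise specialization of Proposition \ref{P:2.2} at the base point $(s,t)=(0,0)$ of the two-parameter family, since \eqref{2.9} differs from \eqref{2.5} only in that every object attached to the full variation is replaced by its value at the origin. First I would note that the identity \eqref{2.5} was derived without imposing any restriction on the parameters, so it holds throughout the variation $\Psi(\cdot,s,t)=u_{s,t}$; in particular I am free to evaluate both sides at $(s,t)=(0,0)$.

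At $(s,t)=(0,0)$ the map $\Psi(\cdot,0,0)$ reduces to $u$, so that each occurrence of $d\Psi$ becomes $du$, the pull-back connection $\nabla^{\Psi}$ induced from $\Psi$ restricts to the pull-back connection $\nabla^{u}$ induced from $u$, and the variation fields specialize as $V|_{(s,t)=(0,0)}=v$ and $W|_{(s,t)=(0,0)}=w$, exactly as recorded in the setup preceding Proposition \ref{P:2.1}. Substituting these into the five integrands on the right of \eqref{2.5} term by term then gives \eqref{2.9}: the first three bilinear terms in the differentials carry over verbatim with $\Psi,V,W$ replaced by $u,v,w$; the fourth term, paired against the $\Phi$-tension field $\sum_{i,j}\nabla^{\Psi}_{e_i}\big(\langle d\Psi(e_i),d\Psi(e_j)\rangle d\Psi(e_j)\big)$, becomes the corresponding fourth integral of \eqref{2.9}; and the curvature term passes directly to the last integral, since $R^{N}$ is intrinsic to $N$ and thus unaffected by the specialization. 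The symmetry manipulation of $R^{N}$ needed here has already been performed inside the proof of Proposition \ref{P:2.2} (in \eqref{2.6}), so nothing further is required of it.

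The only point that calls for care is the notational convention in the fourth term: the symbol $\nabla^{u}_{\frac{\partial}{\partial s}}v$ appearing in \eqref{2.9} is to be read as the restriction to $(s,t)=(0,0)$ of the second-order expression $\nabla^{\Psi}_{\frac{\partial}{\partial s}}V$, and not as a covariant derivative of the fixed field $v$ alone; I would state this explicitly to avoid ambiguity. With that convention fixed, no computation remains, and \eqref{2.9} follows at once. Consequently there is no genuine analytic obstacle in this corollary, as it is a direct evaluation rather than a fresh estimate; the entire substance of the argument resides in Proposition \ref{P:2.2}.
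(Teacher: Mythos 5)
Your proposal is correct and is essentially the paper's own proof: the paper disposes of this corollary with the single line ``This follows at once from Proposition \ref{P:2.2},'' which is exactly your evaluation of \eqref{2.5} at $(s,t)=(0,0)$ with $d\Psi \to du$, $\nabla^{\Psi}\to\nabla^{u}$, $V\to v$, $W\to w$. Your added caveat that $\nabla^{u}_{\frac{\partial}{\partial s}}v$ in \eqref{2.9} must be read as the restriction of $\nabla^{\Psi}_{\frac{\partial}{\partial s}}V$ rather than a derivative of the fixed field $v$ is a sensible clarification but does not alter the argument.
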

\begin{proof} This follows at once from Proposition \ref{P:2.2}.
\end{proof}
\begin{cor}  $\big ($The second variation formula of $\Phi$-energy $E_{\Phi}$($u$ is not necessary $\Phi$-harmonic)$\big )$
\begin{equation}\label{2.10}
\begin{aligned}
& \frac {d^2} {dt^2}  E_{\Phi}(u_{t})_{\big |_{t=0}}
 =  \int _M \sum_{i,j=1}^m \langle \nabla ^{u}_{e_i} v , du (e_j) \rangle ^2\, dx
+ \int _M \sum_{i,j=1}^m \langle  du (e_i), \nabla ^{u}_{e_j} v  \rangle \langle \nabla ^{u}_{e_i} v, du (e_j) \rangle\, dx
\end{aligned}
\end{equation}
\begin{equation*}
\begin{aligned}
& \quad + \int_M  \sum_{i,j=1}^m \langle  du (e_i) , du (e_j) \rangle \langle \nabla ^{u}_{e_i} v, \nabla ^{u}_{e_j} v \rangle \, dx\\
&\quad -  \int _M \bigg< \,  \nabla ^{u}_{\frac{\partial}{\partial t}}  v,\sum_{i,j=1}^m \nabla ^{u}_{e_i} \big (\langle  du (e_i),du (e_j) \rangle du (e_j) \big ) \bigg>\, dx \\
& \quad + \int _M \sum_{i=1}^m  \bigg< R^{N} \big (v, du (e_i) \big ) v, \sum_{j=1}^m \langle  du (e_i) , du (e_j) \rangle du (e_j) \bigg> \, dx.
\end{aligned}
\end{equation*}\label{C:2.5}
\end{cor}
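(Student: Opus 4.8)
The plan is to derive the one-parameter second variation formula \eqref{2.10} directly from the mixed two-parameter formula \eqref{2.9} of Corollary \ref{C:2.4}, by specializing the two-parameter variation to its diagonal. First I would pass from the one-parameter datum to a two-parameter one: given a compactly supported one-parameter family $u_t$ with variation field $v = \frac{\partial u_t}{\partial t}\big|_{t=0}$ and covariant acceleration $\nabla^u_{\frac{\partial}{\partial t}} v$, set $u_{s,t} := u_{s+t}$. Writing $\tau = s+t$, the two velocity fields coincide, $V = \frac{\partial u_{s,t}}{\partial s} = \frac{\partial u_\tau}{\partial\tau} = \frac{\partial u_{s,t}}{\partial t} = W$, so in particular $v = w$ at $(s,t) = (0,0)$. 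Since $E_{\Phi}(u_{s,t}) = E_{\Phi}(u_{s+t})$ depends on $s$ and $t$ only through $\tau$, the chain rule gives
\[
\frac{\partial^2}{\partial s\,\partial t} E_{\Phi}(u_{s,t})\Big|_{(0,0)} = \frac{d^2}{dt^2} E_{\Phi}(u_t)\Big|_{t=0},
\]
which is precisely the left-hand side of \eqref{2.10}.

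Next I would substitute $w = v$ (equivalently $W = V$) into the five integrands of \eqref{2.9}. The first three terms, which are curvature-free, specialize verbatim: the product $\langle \nabla^u_{e_i} v, du(e_j)\rangle\langle \nabla^u_{e_i} w, du(e_j)\rangle$ collapses to $\langle \nabla^u_{e_i} v, du(e_j)\rangle^2$, while the second and third terms simply lose their $w$ in favor of $v$. The fifth, curvature, term passes from $R^{N}(v, du(e_i)) w$ to $R^{N}(v, du(e_i)) v$. The one point requiring care is the fourth term, where I must identify the two-parameter quantity $\nabla^u_{\frac{\partial}{\partial s}} v$ appearing in \eqref{2.9} with the genuine one-parameter acceleration $\nabla^u_{\frac{\partial}{\partial t}} v$ of \eqref{2.10}: since for fixed $t$ the field $\frac{\partial}{\partial s}$ agrees with $\frac{\partial}{\partial\tau}$, the covariant derivative $\nabla^{\Psi}_{\frac{\partial}{\partial s}} V$ equals $\nabla^{\Psi}_{\frac{\partial}{\partial\tau}}\big(\frac{\partial u_\tau}{\partial\tau}\big)$, the covariant acceleration of the curve $\tau \mapsto u_\tau$, which at $\tau = 0$ is exactly $\nabla^u_{\frac{\partial}{\partial t}} v$. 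With this identification the five integrands of \eqref{2.9} reduce term by term to those of \eqref{2.10}.

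The only obstacle, and it is a mild one, is precisely the bookkeeping in this last step. One must verify that the diagonal substitution neither annihilates the acceleration term nor introduces a spurious extra curvature contribution; that is, that the single mixed covariant derivative $\nabla^{\Psi}_{\frac{\partial}{\partial s}} V$ in the two-parameter formula is genuinely the one-parameter acceleration and not, say, a commutator or Lie-bracket term. Once this is confirmed, the formula \eqref{2.10} follows at once from Corollary \ref{C:2.4}, so no fresh variational computation beyond Proposition \ref{P:2.2} is needed.
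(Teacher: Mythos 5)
Your proposal is correct and matches the paper's (implicit) derivation: the paper gives no separate proof of this corollary, but its intended argument --- made explicit in the proof of Corollary \ref{C:2.6}, which says ``Set $W=V$ and $s=t$ in Proposition \ref{P:2.2}'' --- is precisely the diagonal specialization of the two-parameter formula that you carry out via $u_{s,t}:=u_{s+t}$. Your identification of $\nabla^{\Psi}_{\frac{\partial}{\partial s}}V$ along the diagonal with the one-parameter acceleration $\nabla^{u}_{\frac{\partial}{\partial t}}v$ is exactly the bookkeeping the paper leaves to the reader, and it is handled correctly.
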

\begin{cor} Suppose either for each fixed $x_0 \in M$,
the curve $\Psi(x_0,t)$
is a constant speed geodesic in $N$ or $u$ is a $\Phi$-harmonic map with
compactly supported $V(x,0)$ in the interior of $M$.  Then
\begin{equation}\label{2.11}
\begin{aligned}
& \frac {d^2} {dt^2} E_{\Phi}(u_{t})=\int _M \sum_{i,j=1}^m \langle \nabla ^{\Psi}_{e_i} V , du_t (e_j) \rangle^2\, dx
+ \int _M \sum_{i,j=1}^m \langle du_t (e_i), \nabla ^{\Psi}_{e_j} V\rangle \langle \nabla ^{\Psi}_{e_i} V, du_t (e_j) \rangle\, dx\\
& \quad + \int_M\sum_{i,j=1}^m \langle du_t (e_i) , du_t (e_j) \rangle \langle \nabla ^{\Psi}_{e_i} V, \nabla ^{\Psi}_{e_j} V\rangle \, dx \\
& \quad + \int _M \sum_{i=1}^m \bigg \langle R^{N} \big (V, du_t (e_i) \big ) V, \sum_{j=1}^m \langle du_t (e_i) , du_t (e_j) \rangle du_t (e_j) \bigg \rangle \, dx.\\¡±
\end{aligned}
\end{equation}
In particular,
\begin{equation}\label{2.12}
\begin{aligned}
& \frac {d^2} {dt^2}  E_{\Phi}(u_{t})_{\big |_{t=0}}  =  \int _M \sum_{i,j=1}^m \langle \nabla ^{u}_{e_i} v , du (e_j) \rangle^2\, dx
+ \int _M \sum_{i,j=1}^m \langle  du (e_i), \nabla ^{u}_{e_j} v  \rangle \langle \nabla ^{u}_{e_i} v, du (e_j) \rangle\, dx\\
& \quad + \int_M  \sum_{i,j=1}^m \langle  du (e_i) , du (e_j) \rangle \langle \nabla ^{u}_{e_i} v, \nabla ^{u}_{e_j} v\rangle \, dx \\
& \quad + \int _M \sum_{i=1}^m \bigg<  R^{N} \big (v, du (e_i) \big ) v, \sum_{j=1}^m \langle  du (e_i) , du (e_j) \rangle du (e_j) \bigg> \, dx.\\
\end{aligned}
\end{equation}\label{C:2.6}
\end{cor}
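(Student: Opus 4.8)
The plan is to deduce both \eqref{2.11} and \eqref{2.12} from the general second variation formula already in hand, by observing that under each of the two hypotheses exactly one term of that formula drops out. First I would write the $t$-dependent analogue of Corollary \ref{C:2.5}: carrying out the computation of Proposition \ref{P:2.2} for a single-parameter family $u_t$, i.e. with both differentiation directions equal to $\frac{\partial}{\partial t}$ (so that $W=V$ and the mixed term $\nabla^{\Psi}_{\frac{\partial}{\partial s}}V$ becomes the acceleration $\nabla^{\Psi}_{\frac{\partial}{\partial t}}V$), produces an identity for $\frac{d^2}{dt^2}E_{\Phi}(u_t)$ equal to the four terms displayed in \eqref{2.11} plus the single extra term
\[
-\int_M \bigg\langle \nabla^{\Psi}_{\frac{\partial}{\partial t}}V,\ \sum_{i,j=1}^m \nabla^{\Psi}_{e_i}\big(\langle du_t(e_i),du_t(e_j)\rangle\, du_t(e_j)\big)\bigg\rangle\, dx .
\]
The whole claim reduces to the vanishing of this extra term.

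Under the first hypothesis, for each fixed $x_0\in M$ the curve $t\mapsto \Psi(x_0,t)$ is a constant-speed geodesic of $N$, so its acceleration is zero: $\nabla^{\Psi}_{\frac{\partial}{\partial t}}V = \nabla^{\Psi}_{\frac{\partial}{\partial t}} d\Psi\big(\tfrac{\partial}{\partial t}\big) = 0$ identically in $t$. Hence the extra term vanishes for every $t$, and \eqref{2.11} holds on the whole interval $-\varepsilon<t<\varepsilon$. Specializing to $t=0$, where $u_0=u$, $du_0=du$ and $V|_{t=0}=v$, then gives \eqref{2.12}.

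Under the second hypothesis, $u$ is $\Phi$-harmonic with $V(\cdot,0)$ compactly supported in the interior of $M$, and it suffices to evaluate the general formula \eqref{2.10} of Corollary \ref{C:2.5} at $t=0$. Its fourth term pairs $\nabla^{u}_{\frac{\partial}{\partial t}}v$ against the $\Phi$-tension field $\sum_{i,j=1}^m \nabla^{u}_{e_i}\big(\langle du(e_i),du(e_j)\rangle\, du(e_j)\big)$, which vanishes identically on $M$ by the $\Phi$-harmonic map equation of Corollary \ref{C:2.2}. Thus the fourth term is zero and \eqref{2.12} follows, the compact support of $V(\cdot,0)$ serving only to ensure that the divergence theorem produces no boundary contribution when the local identity of Proposition \ref{P:2.2} is integrated over $M$.

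The argument is bookkeeping rather than hard analysis, and I expect no genuine obstacle; the one point requiring care is that the two hypotheses act through different mechanisms. The geodesic hypothesis annihilates the acceleration factor and so validates the stronger $t$-dependent identity \eqref{2.11} for all $t$, whereas the $\Phi$-harmonic hypothesis annihilates the $\Phi$-tension factor only at $t=0$ and hence yields just the pointwise identity \eqref{2.12}. Keeping these two routes separate --- and reusing, in the single-parameter reduction, the curvature symmetry $\langle R^N(d\Psi(\tfrac{\partial}{\partial t}),d\Psi(e_i))V,d\Psi(e_j)\rangle=\langle R^N(V,d\Psi(e_j))V,d\Psi(e_i)\rangle$ already exploited in \eqref{2.6} --- completes the proof.
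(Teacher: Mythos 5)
Your proof is correct and follows essentially the same route as the paper: set $W=V$ and $s=t$ in Proposition \ref{P:2.2}, and observe that the lone extra term --- the pairing of the acceleration $\nabla^{\Psi}_{\frac{\partial}{\partial t}}V$ against the $\Phi$-tension field $\sum_{i,j=1}^m \nabla^{\Psi}_{e_i}\big(\langle d\Psi(e_i),d\Psi(e_j)\rangle\, d\Psi(e_j)\big)$ --- vanishes either because the variation curves are geodesics or because of the $\Phi$-harmonic equation of Corollary \ref{C:2.2}. If anything, your handling of the second case is more careful than the paper's, which claims the $t$-dependent identity \eqref{2.11} under both hypotheses before specializing, whereas (as you correctly note) $\Phi$-harmonicity of $u=u_0$ annihilates the tension factor only at $t=0$ and hence directly justifies only \eqref{2.12}.
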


\begin{remark}
The case $u$ is $\Phi$-harmonic, \eqref{2.12} is due to Kawai and Nakauchi \cite{KN}. 
\end{remark}

\begin{proof}
Set $W=V$ and $s=t$ in Proposition \ref{P:2.2}.  Then the term
\[ -  \int _M \bigg \langle \,  \nabla ^{\Psi}_{\frac {\partial}{\partial t}}  V,\sum_{i,j=1}^m \nabla ^{\Psi}_{e_i} \big (\langle  d\Psi (e_i),d\Psi (e_j) \rangle d\Psi (e_j) \big ) \bigg \rangle\, dx \]
vanishes  because by the assumption, either the curves
are constant speed geodesics in
which $\nabla^u_{{\partial\over{\partial
t}}}V\equiv 0$, or $u$ is a $\Phi$-harmonic map, by the
first variational formula the whole term is zero . This proves \eqref{2.11}. Setting $t=0$ and $\Psi(\cdot, 0) = u(\cdot)\, ,$ in \eqref{2.11}, we prove  \eqref{2.12}. 
\end{proof}

\begin{cor}(\cite {KN}) Let $u: M \rightarrow N$ be a $\Phi$-harmonic map. Then
\begin{equation}\label{2.13}
\begin{aligned}
& \frac {\partial ^2} {\partial s \partial t}  E_{\Phi}(u_{s,t})_{\big |_{(s,t)=(0,0)}}  =  \int _M \sum_{i,j=1}^m \langle \nabla ^{u} _{e_i} v , du (e_j) \rangle \langle \nabla ^{u}_{e_i} w, du (e_j) \rangle\, dx \\
& \quad + \int _M \sum_{i,j=1}^m  \langle \nabla ^{u}_{e_i} v, du (e_j) \rangle \langle  du (e_i), \nabla ^{u}_{e_j} w  \rangle\, dx\\
&\quad+ \int_M  \sum_{i,j=1}^m \langle  du (e_i) , du (e_j) \rangle \langle \nabla ^{u}_{e_i} v, \nabla ^{u}_{e_j} w\rangle \, dx \\
&\quad + \int _M \sum_{i=1}^m  \bigg \langle R^{N} \big (v, du (e_i) \big ) w, \sum_{j=1}^m \langle  du (e_i) , du (e_j) \rangle du (e_j) \bigg \rangle \, dx.\\
\end{aligned}
\end{equation}
\end{cor}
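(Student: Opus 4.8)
The plan is to obtain \eqref{2.13} directly from the general two-parameter second variation formula \eqref{2.9} of Corollary \ref{C:2.4}, which holds for an arbitrary smooth map $u$ with no harmonicity assumption. Comparing the right-hand sides of \eqref{2.9} and \eqref{2.13}, I observe that they agree in four of the five terms: the leading product term, the term pairing $\langle \nabla^u_{e_i} v, du(e_j)\rangle$ with $\langle du(e_i), \nabla^u_{e_j} w\rangle$, the term coupling $\langle du(e_i), du(e_j)\rangle$ with $\langle \nabla^u_{e_i} v, \nabla^u_{e_j} w\rangle$, and the curvature term involving $R^N$. The sole discrepancy is the fourth term of \eqref{2.9},
\[
-\int_M \bigg\langle \nabla^u_{\frac{\partial}{\partial s}} v,\ \sum_{i,j=1}^m \nabla^u_{e_i}\big(\langle du(e_i), du(e_j)\rangle\, du(e_j)\big)\bigg\rangle\, dx,
\]
which is absent from \eqref{2.13}. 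Hence the entire content of the proof is to show that this term vanishes when $u$ is $\Phi$-harmonic.

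First I would invoke Corollary \ref{C:2.4} to express $\frac{\partial^2}{\partial s \partial t} E_\Phi(u_{s,t})\big|_{(0,0)}$ as the sum of the five terms of \eqref{2.9}. The crucial step is then to recognize the inner summation $\sum_{i,j=1}^m \nabla^u_{e_i}\big(\langle du(e_i), du(e_j)\rangle\, du(e_j)\big)$ appearing in the fourth term as precisely the left-hand side of the $\Phi$-harmonic map equation \eqref{2.4}. Since $u$ is $\Phi$-harmonic by hypothesis, Corollary \ref{C:2.2} guarantees that this expression vanishes identically on $M$; consequently the fourth integrand is zero pointwise and the fourth term drops out entirely, regardless of the variation field $\nabla^u_{\partial/\partial s} v$.

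What remains are exactly the first, second, third, and fifth terms of \eqref{2.9}, which match the four terms on the right-hand side of \eqref{2.13} term by term, completing the argument. I do not anticipate any genuine obstacle: the result is a clean specialization of the general formula, and the only point to verify is that $\Phi$-harmonicity enters solely through the vanishing of the operator in the fourth term while leaving the other four terms untouched, which is immediate from the structure of \eqref{2.9}.
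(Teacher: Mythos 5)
Your proposal is correct and is precisely the paper's own argument: the paper proves this corollary by citing Corollaries \ref{C:2.2} and \ref{C:2.4}, i.e., by specializing the general two-parameter formula \eqref{2.9} and using the $\Phi$-harmonic equation \eqref{2.4} to annihilate the fourth term. Your write-up simply makes explicit the term-by-term comparison that the paper leaves to the reader.
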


\begin{proof} This follows at once from Corollaries \ref{C:2.2} and \ref{C:2.4}.
\end{proof}

\section{An Average variational method Part I: Average second variation formulas for $\Phi$-energy }
We assume  $M$ (resp. $N$) is isometrically immersed in the Euclidean space $\mathbb R^{q}$. Let $\overline{\nabla}$ be the standard flat connection on $\mathbb R^{q}$, $\nabla$ (resp. $\nabla ^N$) the Riemannian connection on $M$ (resp. $N$) and $B$ (resp. $\mathsf B$) the second fundamental form of $M$ (resp. $N$) in $\mathbb R^{q}$. These are related by
\begin{equation}\label{3.1}
\overline{\nabla}_XY=\nabla_XY+B(X,Y)\qquad \big (\operatorname{resp}. \overline{\nabla}_{\mathsf X} \mathsf Y=\nabla^N_{\mathsf X} \mathsf Y+\mathsf B(\mathsf X,\mathsf Y)\big ) ,
\end{equation}
where $X,Y$ (resp. $\mathsf X,\mathsf Y$) are smooth vector fields on $M$ (resp. $N$). If $T^{\bot} M$ (resp. $T^{\bot} N$) is the normal bundle of $M$ (resp. $N$) in $\mathbb R^{q}$, $\eta$ (resp. $\zeta$) is a smooth section of $T^\bot M$ (resp. $T^\bot N$), then the Weingarten map $A^\eta X$ (resp. $\mathsf A^\zeta \mathsf X$ ) and the connection $\nabla^{\bot}_X\eta$ (resp. ${\nabla^N}^{\bot}_{\mathsf X}\zeta$ ) in the normal bundle are defined by
\begin{equation}\label{3.2}
\overline{\nabla}_X\eta=-A^\eta X+\nabla^\bot _X\eta \qquad \big (\operatorname{resp}. \overline{\nabla}^N_{\mathsf X}\zeta=-\mathsf A^\zeta \mathsf X+{\nabla^N}^{\bot} _{\mathsf X}\zeta \big ),
\end{equation}
where $-A^ \eta X$ (resp. $-\mathsf A^ \zeta \mathsf X$) is the component tangent to $M$ (resp. $N$) and $\nabla^\bot _{X}\eta$ (resp. ${\nabla^N}^{\bot} _{\mathsf X}\zeta $) is normal to $M$ (resp. $N$). The tensors $A$ and $B$ (resp. $\mathsf A$ and $\mathsf B$) are related by
\begin{equation} \label{3.3}\langle A^\eta X, Y \rangle = \langle B(X, Y), \eta  \rangle \qquad \big (\operatorname{resp}. \langle \mathsf A^\zeta \mathsf X, \mathsf Y \rangle = \langle \mathsf  B(\mathsf X, \mathsf Y), \zeta  \rangle \big )
\end{equation}
For each $x\in M$, let $e_{m+1},\cdots,e_{q}$ be an orthonormal basis for the normal space $T^\bot M_x$ to $M$ at $x$. Define the Ricci tensor $\text{Ric}^M : T_x(M) \to T_x(M)$ by
\begin{eqnarray}\label{3.4} \text{Ric}^M (v) = \sum _{i=1}^m R(v, e_i) e_i
\end{eqnarray}
Define selfadjoint linear map $Q^M_x:T_x M\rightarrow T_x M$ by
\begin{eqnarray} \label{3.5}
Q^M_x=\sum_{\alpha=m+1}^{q}\big (2A^{e_{\alpha}} A^{e_{\alpha}}-\text{trace}(A^{e_{\alpha}})A^{e_{\alpha}}\big ),
\end{eqnarray}
Then the Guass curvature equation implies
\begin{equation} \label{3.6}
\text{Ric}^M-\sum_{\alpha=m+1}^{q}\text{tr}(A^{e_{\alpha}})A^{e_{\alpha}}+\sum_{\alpha=m+1}^{q}A^{e_{\alpha}} A^{e_{\alpha}} = 0\, .
\end{equation}
Using this in the definition of $Q^M$ yields
\begin{equation} \begin{aligned}\label{3.7}
Q^M&=\sum_{\alpha=m+1}^{q}\left (2A^{e_{\alpha}} A^{e_{\alpha}} -\text{tr}(A^{e_{\alpha}})A^{e_{\alpha}}\right )\\
&=-2\text{Ric}^M+\sum_{\alpha=m+1}^{q}\text{tr}(A^{e_{\alpha}})A^{e_{\alpha}}=-\text{Ric}^M+\sum_{\alpha=m+1}^{q}A^{e_{\alpha}} A^{e_{\alpha}},
\end{aligned}\end{equation}

Similarly, for each $y\in N$, let $\mathsf e_{n+1},\cdots,\mathsf e_{q}$ be an orthonormal basis for the normal space $T^\bot N_y$ to $N$ at $y$. Define the Ricci tensor $\text{Ric}^N : T_y(N) \to T_y(N)$,  and selfadjoint linear map $Q^N_y:T_y N\rightarrow T_y N$ analogously and yields
\begin{equation*} \begin{aligned}(3.7^{\prime})\quad
Q^N&=\sum_{\alpha=n+1}^{q}\left (2\mathsf A^{\mathsf e_{\alpha}} \mathsf A^{\mathsf e_{\alpha}} -\text{tr}(\mathsf A^{\mathsf e_{\alpha}})\mathsf A^{\mathsf e_{\alpha}}\right )\\
&=-2\text{Ric}^N+\sum_{\alpha=n+1}^{q}\text{tr}(\mathsf A^{\mathsf e_{\alpha}})\mathsf A^{\mathsf e_{\alpha}}=-\text{Ric}^N+\sum_{\alpha=n+1}^{q}\mathsf A^{\mathsf e_{\alpha}} \mathsf A^{\mathsf e_{\alpha}},
\end{aligned}\end{equation*}

Let $\mathsf v, \mathsf v^\top, \mathsf v^{\perp}$ denote a unit vector in $\mathbb R^q$ the tangential projection of $\mathsf v$ onto $N$, and the normal projection of $\mathsf v$ onto $N$ respectively. We can choose an adopted orthonormal basis $\{\mathsf v_{\ell}\}_{{\ell}=1}^q$ in $\mathbb R^q$ such that $\{\mathsf v_{\ell}\}_{{\ell}=1}^n$ is tangent to $N$, and $\{\mathsf v_{\ell}\}_{{\ell}=n+1}^q$ is normal to $N$ at a point in $N$. Denote by $\mathsf f_t^{\mathsf v_{\ell}^\top}$ the flow generated by $\mathsf v_{\ell}^\top$.

\begin{thm}$($An average variation formula for $\Phi$-energy on the target of $u$ which is not necessarily $\Phi$-harmonic $)$\label{T:3.1}
\begin{equation}\label{3.8}
\begin{aligned}
\sum_{\ell=1}^q\frac{d^2}{dt^2}& E_{\Phi}(\mathsf f_t^{\mathsf v_{\ell}^\top} \circ u)_{\big |_{t=0}} =\int_M \sum_{i=1}^m  \left \langle  Q^N \big (du (e_i)\big ),\sum_{j=1}^m \langle  du (e_i) , du (e_j) \rangle du (e_j)\big)  \right \rangle \, dx\\
& \quad + 2 \int_M \sum_{i=1}^m  \sum_{\alpha=n+1}^q \left \langle \mathsf A^{\mathsf v_{\alpha}}\big (du(e_i)\big ), \sum_{j=1}^m  \langle \mathsf A^{\mathsf v_{\alpha}}\big (du(e_i)\big ), du (e_j) \rangle\, du (e_j) \right \rangle\, dx,\\
\end{aligned}
\end{equation}
where $Q^N$ is as in $(3.7^{\prime}).$
\end{thm}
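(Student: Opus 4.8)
The plan is to apply the general one-parameter second variation formula, Corollary \ref{C:2.5}, equation \eqref{2.10} (valid precisely because $u$ need not be $\Phi$-harmonic, since it retains the acceleration term), to each of the $q$ variations $u^{(\ell)}_t=\mathsf f_t^{\mathsf v_\ell^\top}\circ u$ separately, and then to sum the resulting identities over $\ell=1,\dots,q$, exploiting the completeness of the ambient orthonormal basis $\{\mathsf v_\ell\}_{\ell=1}^q$ of $\mathbb R^q$. The variation field of the $\ell$-th flow is $v_\ell=\mathsf v_\ell^\top\circ u$, the tangential projection of the constant vector $\mathsf v_\ell$ restricted along $u$.

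First I would record the two differential identities on which everything rests. Differentiating the constant vector $\mathsf v_\ell=\mathsf v_\ell^\top+\mathsf v_\ell^\perp$ along a tangent direction and applying the Weingarten relation \eqref{3.2} gives $\nabla^N_{\mathsf X}(\mathsf v_\ell^\top)=\mathsf A^{\mathsf v_\ell^\perp}\mathsf X$ for every tangent vector $\mathsf X$; pulling this back by $u$ yields $\nabla^u_{e_i}v_\ell=\mathsf A^{\mathsf v_\ell^\perp}(du(e_i))$, while the acceleration of the flow is $\nabla^u_{\frac{\partial}{\partial t}}v_\ell\big|_{t=0}=\mathsf A^{\mathsf v_\ell^\perp}(\mathsf v_\ell^\top\circ u)$. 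I would then isolate the summation rules coming from completeness: since $\langle \mathsf A^{\mathsf v_\ell^\perp}\mathsf X,\mathsf Y\rangle=\langle \mathsf B(\mathsf X,\mathsf Y),\mathsf v_\ell\rangle$ by \eqref{3.3} and $\mathsf B$ is normal-valued, one gets $\sum_{\ell=1}^q\langle \mathsf A^{\mathsf v_\ell^\perp}\mathsf X,\mathsf Y\rangle\langle \mathsf A^{\mathsf v_\ell^\perp}\mathsf Z,\mathsf W\rangle=\langle \mathsf B(\mathsf X,\mathsf Y),\mathsf B(\mathsf Z,\mathsf W)\rangle$, together with $\sum_{\ell=1}^q R^N(\mathsf v_\ell^\top,\mathsf X)\mathsf v_\ell^\top=-\operatorname{Ric}^N(\mathsf X)$ by the definition \eqref{3.4}.

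Substituting into \eqref{2.10} and summing, the five terms collapse as follows. The first two quadratic terms merge, via the rule above and the symmetry of $\mathsf B$, into $2\int_M\sum_{i,j}\langle \mathsf B(du(e_i),du(e_j)),\mathsf B(du(e_i),du(e_j))\rangle\,dx$, which is exactly the second integral in \eqref{3.8} after re-expanding $|\mathsf B|^2=\sum_{\alpha=n+1}^q\langle\,\cdot\,,\mathsf v_\alpha\rangle^2$. The third (metric-weighted) term produces $\int_M\sum_{i,j}\langle du(e_i),du(e_j)\rangle\sum_{\alpha}\langle \mathsf A^{\mathsf v_\alpha}du(e_i),\mathsf A^{\mathsf v_\alpha}du(e_j)\rangle\,dx$, while the curvature term contributes $-\int_M\sum_{i,j}\langle du(e_i),du(e_j)\rangle\langle \operatorname{Ric}^N(du(e_i)),du(e_j)\rangle\,dx$; by the Gauss-equation form of $Q^N$ in $(3.7^{\prime})$ these two combine precisely into the first integral of \eqref{3.8}. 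Finally, the acceleration term is the only place where the hypothesis that $u$ need not be $\Phi$-harmonic matters: for a single $\ell$ it does not vanish, but upon summing I would show $\sum_{\ell=1}^q\mathsf A^{\mathsf v_\ell^\perp}(\mathsf v_\ell^\top)=0$ (again by completeness, using $\langle \mathsf B(e_a',\,\cdot\,),e_a'\rangle=0$ for a tangent frame $\{e_a'\}$), so the accumulated acceleration contribution, paired against the fixed vector field in \eqref{2.4}, disappears.

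The main obstacle I anticipate is bookkeeping rather than conceptual: one must keep scrupulous track of which sums run over the full ambient basis $\ell=1,\dots,q$ versus the purely normal indices $\alpha=n+1,\dots,q$, and use repeatedly that $\mathsf B$ is normal-valued so that the two coincide when contracted against $\mathsf B$. The one genuinely delicate point is the cancellation of the averaged acceleration term $\sum_\ell\nabla^u_{\frac{\partial}{\partial t}}v_\ell$, which is the crux of the extrinsic average method and is exactly what permits \eqref{3.8} to hold with no $\Phi$-harmonicity assumption on $u$.
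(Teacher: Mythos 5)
Your proposal is correct and takes essentially the same route as the paper's own proof: apply Corollary \ref{C:2.5} (equation \eqref{2.10}) to each flow $\mathsf f_t^{\mathsf v_{\ell}^\top}\circ u$, sum over $\ell$, use the Weingarten identity $\nabla^u_{e_i}\mathsf v_{\ell}^\top=\mathsf A^{\mathsf v_{\ell}^\bot}\big(du(e_i)\big)$ to convert the three quadratic terms, kill the averaged acceleration term via $\sum_{\ell=1}^q\mathsf A^{\mathsf v_{\ell}^\bot}(\mathsf v_{\ell}^\top)=0$, and assemble $Q^N$ from $(3.7^{\prime})$. The only cosmetic difference is in the curvature term, where you pass to $-\operatorname{Ric}^N$ and invoke $Q^N=-\operatorname{Ric}^N+\sum_{\alpha}\mathsf A^{\mathsf v_{\alpha}}\mathsf A^{\mathsf v_{\alpha}}$, while the paper applies the Gauss equation directly and uses $Q^N=\sum_{\alpha}\big(2\mathsf A^{\mathsf v_{\alpha}}\mathsf A^{\mathsf v_{\alpha}}-\operatorname{tr}(\mathsf A^{\mathsf v_{\alpha}})\mathsf A^{\mathsf v_{\alpha}}\big)$; these agree by $(3.7^{\prime})$ itself, so the two computations are equivalent.
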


\begin{proof}

 As $\mathsf v_{\ell}$ is parallel in $\mathbb R^q$, we have
\begin{equation}\label{3.9}
\begin{aligned}
\nabla ^u_{e_i}\mathsf v_{\ell}^\top&=\nabla ^N_{du(e_i)}\mathsf v_{\ell}^\top=\left(\nabla ^{\mathbb R ^q}_{du(e_i)}\mathsf v_{\ell}^\top\right)^\top
=\left(\nabla ^{\mathbb R ^q}_{du(e_i)}(\mathsf v_{\ell}-\mathsf v_{\ell}^\bot)\right)^\top\\
&=\mathsf A^{\mathsf v_{\ell}^\bot}(du(e_i))\, ,
\end{aligned}
\end{equation}
Then apply Corollary \ref{C:2.5} to $u_t = \mathsf f_t^{\mathsf v_{\ell}^\top} \circ u$ in which $\mathsf v=\mathsf v_{\ell}^\top$, we have
\begin{equation}\label{3.10}
\begin{aligned}
& \sum_{\ell=1}^q\frac{d^2}{dt^2}E_{\Phi}( \mathsf f_t^{\mathsf v_{\ell}^\top} \circ u)_{\big |_{t=0}}
 =  \int _M \sum_{\ell=1}^q \sum_{i,j=1}^m \langle \nabla ^{u}_{e_i} \mathsf v_{\ell}^\top , du (e_j) \rangle ^2\, dx \\
& \quad + \int _M \sum_{\ell=1}^q \sum_{i,j=1}^m \langle  du (e_i), \nabla ^{u}_{e_j} \mathsf v_{\ell}^\top  \rangle \langle \nabla ^{u}_{e_i} \mathsf v_{\ell}^\top, du (e_j) \rangle\, dx \\
& \quad + \int_M  \sum_{\ell=1}^q\sum_{i,j=1}^m \langle  du (e_i) , du (e_j) \rangle \langle \nabla ^{u}_{e_i} \mathsf v_{\ell}^\top, \nabla ^{u}_{e_j} \mathsf v_{\ell}^\top \rangle \, dx\\
& \quad - \int _M \sum_{\ell=1}^q\bigg \langle \,  \nabla ^{N}_{\mathsf v_{\ell}^\top} \, \mathsf v_{\ell}^\top,\sum_{i,j=1}^m \nabla ^{u}_{e_i} \big (\langle  du (e_i),du (e_j) \rangle du (e_j) \big ) \bigg \rangle\, dx \\
& \quad + \int _M \sum_{\ell=1}^q\sum_{i=1}^m  \bigg \langle R^{N} \big (\mathsf v_{\ell}^\top, du (e_i) \big ) \mathsf v_{\ell}^\top, \sum_{j=1}^m \langle  du (e_i) , du (e_j) \rangle du (e_j) \bigg \rangle \, dx.
\end{aligned}
\end{equation}

In view of \eqref{3.9},  we have the first integrand in \eqref{3.10}

\begin{equation}\label{3.11}
\begin{aligned}
& \quad \sum_{\ell=1}^q \sum_{i,j=1}^m \langle \nabla ^{u}_{e_i} \mathsf v_{\ell}^\top , du (e_j) \rangle ^2 \\
& = \sum_{\ell=1}^q \sum_{i,j=1}^m \langle \mathsf A^{\mathsf v_{\ell}^\bot}\big (du(e_i) \big ), du (e_j) \rangle \langle \mathsf A^{\mathsf v_{\ell}^\bot}\big (du(e_i)\big ) , du (e_j) \rangle \\
& = \sum_{\ell=1}^q \sum_{i=1}^m \left \langle \mathsf A^{\mathsf v_{\ell}^\bot}\big (du(e_i)\big ), \sum_{j=1}^m  \langle \mathsf A^{\mathsf v_{\ell}^\bot}\big (du(e_i)\big ), du (e_j) \rangle\, du (e_j) \right \rangle\\
& = \sum_{\alpha=n+1}^q \sum_{i=1}^m \left \langle \mathsf A^{\mathsf v_{\alpha}}\big (du(e_i)\big ), \sum_{j=1}^m  \langle \mathsf A^{\mathsf v_{\alpha}}\big (du(e_i)\big ), du (e_j) \rangle\, du (e_j) \right \rangle.
\end{aligned}
\end{equation}
The second integrand in \eqref{3.10}, via \eqref{3.9}
\begin{equation}\label{3.12}
\begin{aligned}
&\quad \sum_{\ell=1}^q \sum_{i,j=1}^m \langle  du (e_i), \nabla ^{u}_{e_j} \mathsf v_{\ell}^\top  \rangle \langle \nabla ^{u}_{e_i} \mathsf v_{\ell}^\top, du (e_j) \rangle\\
 & = \sum_{\ell=1}^q \sum_{i,j=1}^m \langle  du (e_i), \mathsf A^{\mathsf v_{\ell}^\bot}\big (du(e_j)\big )  \rangle \langle \mathsf A^{\mathsf v_{\ell}^\bot}\big (du(e_i)\big ) , du (e_j) \rangle
 \end{aligned}
\end{equation}
 \[
\begin{aligned}
 &  = \sum_{\ell=1}^q \sum_{i=1}^m \left \langle \mathsf A^{\mathsf v_{\ell}^\bot}\big (du(e_i)\big ) , \sum_{j=1}^m  \langle  du (e_i), \mathsf A^{\mathsf v_{\ell}^\bot}\big (du(e_j)\big )  \rangle du (e_j) \right \rangle\\
& = \sum_{\alpha=n+1}^q  \sum_{i=1}^m \left \langle \mathsf A^{\mathsf v_{\alpha}}\big (du(e_i)\big ) , \sum_{j=1}^m  \langle  \mathsf A^{\mathsf v_{\alpha}} \big (du (e_i)\big ), du(e_j)  \rangle du (e_j) \right \rangle.
\end{aligned}
\]

The third integrand in \eqref{3.10}
\begin{equation}\label{3.13}
\begin{aligned}
 & \sum_{\ell=1}^q \sum_{i,j=1}^m \langle  du (e_i) , du (e_j) \rangle \langle \nabla ^{u}_{e_i} \mathsf v_{\ell}^\top, \nabla ^{u}_{e_j} \mathsf v_{\ell}^\top \rangle\\
  & =  \sum_{\ell=1}^q \sum_{i,j=1}^m \langle  du (e_i) , du (e_j) \rangle \langle \mathsf A^{\mathsf v_{\ell}^\bot}\big (du(e_i)\big ) , \mathsf A^{\mathsf v_{\ell}^\bot}\big (du(e_j)\big )  \rangle \\
\end{aligned}
\end{equation}
\[
\begin{aligned}
 & =  \sum_{\ell=1}^q \sum_{i,j=1}^m \langle  du (e_i) , du (e_j) \rangle \langle \mathsf A^{\mathsf v_{\ell}^\bot}\mathsf A^{\mathsf v_{\ell}^\bot}\big (du(e_i)\big ) , \big (du(e_j)\big )  \rangle \\
 & =  \sum_{\ell=n+1}^q \sum_{i=1}^m \left \langle \mathsf A^{\mathsf v_{\ell}^\bot}\mathsf A^{\mathsf v_{\ell}^\bot}\big (du(e_i)\big ) , \sum_{j=1}^m \langle  du (e_i) , du (e_j) \rangle \big (du(e_j)\big )  \right \rangle \\
 & =  \sum_{\alpha=n+1}^q \sum_{i=1}^m \left \langle \mathsf A^{\mathsf v_{\alpha}}\mathsf A^{\mathsf v_{\alpha}}\big (du(e_i)\big ) , \sum_{j=1}^m \langle  du (e_i) , du (e_j) \rangle \big (du(e_j)\big )  \right \rangle.
\end{aligned}
\]

Since either $\mathsf v_{\ell}^{\bot} = 0$ or $\mathsf v_{\ell}^{\top} = 0\, $ for each $1 \le \ell \le q\, ,$ \[\sum_{\ell=1}^q \nabla^N_{\mathsf v_{\ell}^\top}\mathsf v_{\ell}^\top=\sum_{\ell=1}^q \big (\nabla^{\mathbb R^q}_{\mathsf v_{\ell}^\top}(\mathsf v_{\ell} - \mathsf v_{\ell}^{\bot})\big )^{\top} = \sum_{\ell=1}^q \big (\nabla^{\mathbb R^q}_{\mathsf v_{\ell}^\top}( - \mathsf v_{\ell}^{\bot})\big )^{\top} = \sum_{\ell=1}^q  \mathsf A^{\mathsf v_{\ell}^{\bot}}(\mathsf v_{\ell}^{\top}) = 0\, ,
\]
we have the forth integrand in \eqref{3.10}
\begin{equation}\label{3.14}
\begin{aligned}
-  \sum_{\ell=1}^q \bigg \langle \,  \nabla ^{N}_{\mathsf v_{\ell}^\top} \, \mathsf v_{\ell}^\top,\sum_{i,j=1}^m \nabla ^{u}_{e_i} \big (\langle  du (e_i),du (e_j) \rangle du (e_j) \big ) \bigg\rangle = 0.
\end{aligned}
\end{equation}

By the Gauss equation, i.e. for every vector field $\mathsf X,\mathsf Y,\mathsf Z,\mathsf W$ on $N$
$$\langle R^N(\mathsf X,\mathsf Y)\mathsf Z,\mathsf W \rangle = \langle R^{\mathbb R^q}(\mathsf X,\mathsf Y)\mathsf Z,\mathsf W\rangle_{\mathbb R^q}  + \langle \mathsf B(\mathsf X,\mathsf W), \mathsf B(\mathsf Y,\mathsf Z)  \rangle_{\mathbb R^q} - \langle \mathsf B(\mathsf X,\mathsf Z), \mathsf B(\mathsf Y,\mathsf W)  \rangle_{\mathbb R^q}\, ,$$ and \eqref{3.3}, the fifth integrand in \eqref{3.10}
\begin{equation}\label{3.15}
\begin{aligned}
&\sum_{\ell=1}^q\sum_{i=1}^m  \left \langle R^{N} \big (\mathsf v_{\ell}^\top, du (e_i) \big ) \mathsf v_{\ell}^\top, \sum_{j=1}^m \langle  du (e_i) , du (e_j) \rangle du (e_j) \right \rangle \\
&=  \sum_{\ell=1}^q\sum_{i=1}^m  \left \langle \mathsf  B\big (\mathsf v_{\ell}^\top,\sum_{j=1}^m \langle  du (e_i) , du (e_j) \rangle du (e_j)\big ), \mathsf B\big (\mathsf v_{\ell}^\top, du (e_i)\big ) \right \rangle_{\mathbb R^q} \\
&\quad - \sum_{\ell=1}^q\sum_{i=1}^m \left \langle \mathsf B(\mathsf v_{\ell}^\top, \mathsf v_{\ell}^\top), \mathsf B \big (du (e_i),\sum_{j=1}^m \langle  du (e_i) , du (e_j) \rangle du (e_j)\big) \right \rangle_{\mathbb R^q}\\
&=  \sum_{\ell=1}^q\sum_{i=1}^m \sum_{{\alpha}=n+1}^q  \left \langle \mathsf  B\big (\mathsf v_{\ell}^\top,\sum_{j=1}^m \langle  du (e_i) , du (e_j) \rangle du (e_j)\big ), \mathsf v_{\alpha} \right \rangle_{\mathbb R^q}  \cdot \left \langle \mathsf B\big (\mathsf v_{\ell}^\top, du (e_i)\big ), \mathsf v_{\alpha} \right \rangle_{\mathbb R^q} \\
\end{aligned}
\end{equation}
\[
\begin{aligned}
&\quad - \sum_{\ell=1}^q\sum_{i=1}^m \sum_{{\alpha}=n+1}^q \langle \mathsf  B(\mathsf v_{\ell}^\top, \mathsf v_{\ell}^\top), \mathsf v_{\alpha} \rangle_{\mathbb R^q}\,   \cdot  \left \langle \mathsf  B\big (du (e_i),\sum_{j=1}^m \langle  du (e_i) , du (e_j) \rangle du (e_j)\big), \mathsf v_{\alpha}  \right \rangle_{\mathbb R^q}\\
&=  \sum_{\ell=1}^q\sum_{i=1}^m \sum_{{\alpha}=n+1}^q  \left \langle \mathsf A^{\mathsf v_{\alpha} }\big (\sum_{j=1}^m \langle  du (e_i) , du (e_j) \rangle du (e_j)\big ), \mathsf v_{\ell}^\top \right \rangle \cdot \left \langle \mathsf A^{\mathsf v_{\alpha}}\big ( du (e_i)\big ),\mathsf v_{\ell}^\top  \right \rangle \\
&\quad - \sum_{\ell=1}^q\sum_{i=1}^m \sum_{{\alpha}=n+1}^q \langle \mathsf A^{\mathsf v_{\alpha} } (\mathsf v_{\ell}^\top), \mathsf v_{\ell}^\top \rangle\,   \cdot  \left \langle  \mathsf A^{\mathsf v_{\alpha} } \big (du (e_i)\big ),\sum_{j=1}^m \langle  du (e_i) , du (e_j) \rangle du (e_j)\big)  \right \rangle\\
&=  \sum_{i=1}^m \sum_{{\alpha}=n+1}^q  \left \langle \mathsf A^{\mathsf v_{\alpha} }\big (\sum_{j=1}^m \langle  du (e_i) , du (e_j) \rangle du (e_j)\big ),  \mathsf A^{\mathsf v_{\alpha}}\big ( du (e_i)\big )  \right \rangle \\
&\quad - \sum_{i=1}^m \sum_{{\alpha}=n+1}^q \text{trace} \, (\mathsf A^{\mathsf v_{\alpha} } ) \left \langle  \mathsf A^{\mathsf v_{\alpha} } \big (du (e_i)\big ),\sum_{j=1}^m \langle  du (e_i) , du (e_j) \rangle du (e_j)  \right \rangle\\
&=  \sum_{i=1}^m \sum_{{\alpha}=n+1}^q  \left \langle \mathsf A^{\mathsf v_{\alpha} }\mathsf A^{\mathsf v_{\alpha}}\big ( du (e_i)\big ), \sum_{j=1}^m \langle  du (e_i) , du (e_j) \rangle du (e_j)    \right \rangle \\
&\quad - \sum_{i=1}^m \sum_{{\alpha}=n+1}^q \text{trace} \, (\mathsf A^{\mathsf v_{\alpha} } ) \left \langle  \mathsf A^{\mathsf v_{\alpha} } \big (du (e_i)\big ),\sum_{j=1}^m \langle  du (e_i) , du (e_j) \rangle du (e_j)\big)  \right \rangle.
\end{aligned}
\]
Substituting \eqref{3.11},\eqref{3.12},\eqref{3.13},\eqref{3.14} and \eqref{3.15} into \eqref{3.10}, we obtain the desired \eqref{3.8}.
\end{proof}

Similarly, we can isometrically immerse $M$ into $\mathbb R^q$. Let $\{v_{\ell}^\top\}$ be the tangential projection of an orthonormal frame field $\{v_{\ell}\}_{\ell=1}^q$ in $\mathbb R^q$ onto $M$. Denote by $f_t^{v_{\ell}^\top}: M \to M$ the flow generated by $v_{\ell}^\top$, apply Corollary \ref{C:2.6} with $u_t=u\circ f_t^{v_{\ell}^{\top}}$ and $u_0=u$. For convenience, we choose $\{v_1,\cdots,v_m\}=\{e_1,\cdots,e_m\}$ to be tangential to $M$, $\{v_{m+1},\cdots,v_q\}=\{e_{m+1},\cdots,e_{q}\}$ to be normal to $M$, and $\nabla ^{\Psi}e_i = 0$ at a point in $M$. We have

\begin{thm}$($An average variation formula for $\Phi$-energy on the domain of a $\Phi$-harmonic map $u$ $)$\label{41}
\begin{equation}\label{3.16}
\begin{aligned}
&\sum_{\ell=1}^q\frac{d^2}{dt^2}E_{\Phi}( u \circ f_t^{v_{\ell}^\top})_{\big |_{t=0}}\\
& =\int_M \sum_{i=1}^m  \left \langle  du \big ( Q^M (e_i)\big ),\sum_{j=1}^m \langle  du (e_i) , du (e_j) \rangle du (e_j)\big)  \right \rangle \, dx\\
& \quad + 2 \int_M \sum_{i=1}^m  \sum_{\alpha=m+1}^q \left \langle du\big (A^{e_{\alpha}}(e_i)\big ), \sum_{j=1}^m  \langle du(e_i), du\big (  A^{e_{\alpha}}(e_j)\big ) \rangle\, du (e_j) \right \rangle\, dx,\\
\end{aligned}
\end{equation}
where $Q^M$ is as in \eqref{3.7}.
\end{thm}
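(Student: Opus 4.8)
The plan is to run the domain companion of Theorem \ref{T:3.1}, this time immersing the domain $M$ in $\mathbb R^q$ and deforming $u$ by flows on $M$. For $u_t = u\circ f_t^{v_\ell^\top}$ the variation field is $v = du(v_\ell^\top)$, so the starting point is Corollary \ref{C:2.6}: since $u$ is $\Phi$-harmonic the tension-type term drops out, and $\frac{d^2}{dt^2}E_\Phi(u\circ f_t^{v_\ell^\top})_{|t=0}$ is given by the four integrands of \eqref{2.12} evaluated at this $v$. Summing over $\ell = 1,\dots,q$ reduces the theorem to evaluating these four sums.

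First I would compute $\nabla^u_{e_i}v$. Since $v_\ell$ is parallel in $\mathbb R^q$, the Gauss and Weingarten formulas yield $\nabla_{e_i}v_\ell^\top = A^{v_\ell^\bot}(e_i)$, the domain analogue of \eqref{3.9}, and hence
\[ \nabla^u_{e_i}v = (\nabla du)(e_i, v_\ell^\top) + du\big(A^{v_\ell^\bot}(e_i)\big). \]
The new feature, absent in Theorem \ref{T:3.1}, is the map's own second fundamental form $\nabla du$. Instead of an adapted frame I would sum over $\ell$ by the completeness relations $\sum_\ell\langle v_\ell^\top, X\rangle\langle v_\ell^\top, Y\rangle = \langle X,Y\rangle$ for tangent $X,Y$, $\sum_\ell\langle v_\ell^\bot,\xi\rangle\langle v_\ell^\bot,\eta\rangle = \langle\xi,\eta\rangle$ for normal $\xi,\eta$, and $\sum_\ell\langle v_\ell^\top, X\rangle\langle v_\ell^\bot,\xi\rangle = 0$. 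Because $\nabla^u_{e_i}v$ is the sum of a piece linear in $v_\ell^\top$ and one linear in $v_\ell^\bot$, all cross terms vanish after summation and the four integrands split into a \emph{normal} part built from $A^{e_\alpha}$ and an \emph{intrinsic} part built from $\nabla du$ and $R^N$.

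The normal part is purely algebraic. Using self-adjointness of $A^{e_\alpha}$, the first integrand becomes $\sum_{\alpha,i}\langle du(A^{e_\alpha}A^{e_\alpha}(e_i)), \sum_j\langle du(e_i),du(e_j)\rangle du(e_j)\rangle$, while the second and third integrands each reproduce $\sum_{\alpha,i}\langle du(A^{e_\alpha}(e_i)), \sum_j\langle du(e_i),du(A^{e_\alpha}(e_j))\rangle du(e_j)\rangle$ and so combine to give the factor $2$ in the last line of \eqref{3.16}; the curvature integrand depends on $v = du(v_\ell^\top)$ alone and therefore contributes solely to the intrinsic part treated below. These pieces supply the $\sum_\alpha A^{e_\alpha}A^{e_\alpha}$ summand of $Q^M$ together with the explicit extrinsic term.

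The intrinsic part is where the real work lies, and I expect it to be the main obstacle. After summation it comprises the three $\nabla du$-quadratic integrands together with the target-curvature integrand
\[ \sum_{i,a}\Big\langle R^N\big(du(e_a),du(e_i)\big)du(e_a), \sum_j\langle du(e_i),du(e_j)\rangle du(e_j)\Big\rangle, \]
so a priori it still contains $R^N$ and $\nabla du$, neither of which survives in \eqref{3.16}. The plan is to integrate the $\nabla du$-quadratic terms by parts over $M$, discarding the resulting divergence and using the $\Phi$-harmonic equation \eqref{2.4} to kill the tension contribution, and then to apply the Ricci commutation identity for $\nabla^2 du$: the $R^N$ produced by interchanging the derivatives cancels the curvature integrand above, while the intrinsic curvature $R^M$ that is simultaneously produced is rewritten, through the Gauss equation \eqref{3.6}, in terms of the second fundamental form of $M$. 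The delicate point is to carry the $\Phi$-energy weights $\langle du(e_i),du(e_j)\rangle$ correctly through this weighted Bochner computation so that the outcome is precisely
\[ -\int_M\sum_i\Big\langle du\big(\operatorname{Ric}^M(e_i)\big), \sum_j\langle du(e_i),du(e_j)\rangle du(e_j)\Big\rangle\,dx, \]
the $-\operatorname{Ric}^M$ summand of $Q^M$. Adding this to the normal part and recalling $Q^M = -\operatorname{Ric}^M + \sum_\alpha A^{e_\alpha}A^{e_\alpha}$ from \eqref{3.7} gives \eqref{3.16}.
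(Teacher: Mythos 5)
Your proposal is correct and takes essentially the same approach as the paper's proof: the same starting point (Corollary \ref{C:2.6} with $v=du(v_\ell^\top)$), the same splitting $\nabla^u_{e_i}v=(\nabla^u_{e_i}du)(v_\ell^\top)+du\big(A^{v_\ell^\bot}(e_i)\big)$ with vanishing cross terms, the same algebraic identification of the normal part with the $\sum_\alpha A^{e_\alpha}A^{e_\alpha}$ and doubled extrinsic terms, and the same Bochner mechanism reducing the intrinsic part to the $-\operatorname{Ric}^M$ term. The only cosmetic differences are that the paper eliminates the cross terms by choosing an adapted frame (each $v_\ell$ either tangent or normal to $M$) rather than by your completeness relations, and it packages your ``integrate by parts, commute derivatives, cancel $R^N$'' step as a citation of the Weitzenb\"{o}ck formula for $1$-forms \eqref{3.24} applied to the curvature integrand, with $\Phi$-harmonicity killing the Hodge term in \eqref{3.26} --- the same identity you propose to run in the opposite direction.
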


\begin{proof} 
Applying \eqref{2.12} in which $v$ is replaced by $du(v_{\ell}^\top)\, ,$ we have
\begin{equation}
\begin{aligned}
& \sum_{\ell=1}^q\frac{d^2}{dt^2}E_{\Phi}( u \circ f_t^{v_{\ell}^\top})_{\big |_{t=0}}  \\
 & =  \int _M \sum_{\ell=1}^q \sum_{i,j=1}^m \langle \nabla ^{u}_{e_i} du(v_{\ell}^\top) , du (e_j) \rangle ^2\, dx\\
  &\quad+ \int _M \sum_{\ell=1}^q \sum_{i,j=1}^m \langle  du (e_i), \nabla ^{u}_{e_j} du (v_{\ell}^\top)  \rangle \langle \nabla ^{u}_{e_i} du (v_{\ell}^\top), du (e_j) \rangle\, dx \\
& \quad + \int_M  \sum_{\ell=1}^q\sum_{i,j=1}^m \langle  du (e_i) , du (e_j) \rangle \langle \nabla ^{u}_{e_i} du (v_{\ell}^\top), \nabla ^{u}_{e_j} du (v_{\ell}^\top) \rangle \, dx\\
& \quad + \int _M \sum_{\ell=1}^q\sum_{i=1}^m  \bigg \langle R^{N} \big (du (v_{\ell}^\top), du (e_i) \big ) du (v_{\ell}^\top), \sum_{j=1}^m \langle  du (e_i) , du (e_j) \rangle du (e_j) \bigg \rangle \, dx.
\end{aligned}\label{3.17}
\end{equation}

Since $v_{\ell}^\top=v_{\ell}-v_{\ell}^\bot$ and $v_{\ell}$ are parallel in $\mathbb R^q$, we have
\begin{equation}
\begin{aligned}
\nabla ^u_{e_i} du(v_{\ell}^\top)&=(\nabla ^u_{e_i} du)(v_{\ell}^\top) + du (\nabla ^M_{e_i}v_{\ell}^\top)= (\nabla ^u_{e_i} du)(v_{\ell}^\top) + du \bigg (\left(\nabla ^{\mathbb R ^q}_{e_i}(v_{\ell}-v_{\ell}^\bot)\right)^\top\bigg )\\
&= (\nabla ^u_{e_i} du)(v_{\ell}^\top) + du \left(A^{v_{\ell}^\bot}(e_i)\right )\, .
\end{aligned}\label{3.18}
\end{equation}

As for each $1 \le \ell \le q\, ,$ either $v_{\ell}^{\bot} = 0$ or $v_{\ell}^{\top} = 0\, ,$
\begin{equation}\label{3.19}
 \langle (\nabla ^u_{e_i} du)(v_{\ell}^\top) , du (e_j) \rangle  \langle du \big (A^{v_{\ell}^\bot}(e_i)\big ) , du (e_j) \rangle = 0\, .
\end{equation}

In view of \eqref{3.18} and \eqref{3.19},  we have the first integrand in \eqref{3.17}

\begin{equation}\label{3.20}
\begin{aligned}
& \sum_{\ell=1}^q \sum_{i,j=1}^m \langle \nabla ^{u}_{e_i} du(v_{\ell}^\top) , du (e_j) \rangle ^2\\
& = \sum_{\ell=1}^q \sum_{i,j=1}^m \bigg ( \langle (\nabla ^u_{e_i} du)(v_{\ell}^\top) , du (e_j) \rangle + \langle du \left(A^{v_{\ell}^\bot}(e_i)\right ) , du (e_j) \rangle\bigg )^2\\
& = \sum_{\ell=1}^q \sum_{i,j=1}^m\langle (\nabla ^u_{e_i} du)(v_{\ell}^\top) , du (e_j) \rangle^2 \\
&\quad+ 2 \langle (\nabla ^u_{e_i} du)(v_{\ell}^\top) , du (e_j) \rangle  \langle du \big (A^{v_{\ell}^\bot}(e_i)\big ) , du (e_j) \rangle \\
& \quad + \langle du \big (A^{v_{\ell}^\bot}(e_i)\big ) , du (e_j) \rangle^2 \\
 & = \sum_{\ell=1}^q \sum_{i,j=1}^m\langle (\nabla ^u_{v_{\ell}^\top} du)(e_i) , du (e_j) \rangle^2 \\
 &\quad+ \sum_{\ell=1}^q \sum_{i,j=1}^m \langle du\big (A^{v_{\ell}^\bot}(e_i) \big ), du (e_j) \rangle \langle du\big (A^{v_{\ell}^\bot}(e_i)\big ) , du (e_j) \rangle \\
 & = \sum_{i,j,k=1}^m\langle (\nabla ^u_{e_k} du)(e_i) , du (e_j) \rangle^2 \\
 &\quad+ \sum_{\alpha=m+1}^q \sum_{i,j=1}^m \langle du\big (A^{e_{\alpha}}(e_i) \big ), du (e_j) \rangle \langle du\big (A^{e_{\alpha}}(e_i)\big ) , du (e_j) \rangle
 \end{aligned}
\end{equation}
 \begin{equation*}
\begin{aligned}
 & = \sum_{i,j,k=1}^m\langle (\nabla ^u_{e_k} du)(e_i) , du (e_j) \rangle^2\\
  &\quad+ \sum_{\alpha=m+1}^q \sum_{i,j,k=1}^m \langle du\big (A^{e_{\alpha}}(e_i)\big ) , du (e_j) \rangle \left \langle du\big (\langle A^{e_{\alpha}}(e_i), e_k \rangle e_k \big ), du (e_j) \right \rangle \\
 & = \sum_{i,j,k=1}^m\langle (\nabla ^u_{e_k} du)(e_i) , du (e_j) \rangle^2 \\
 &\quad+ \sum_{\alpha=m+1}^q \sum_{i,j,k=1}^m \left  \langle du\bigg (A^{e_{\alpha}}\big (\langle A^{e_{\alpha}}(e_k), e_i \rangle e_i\big )\bigg ) , du (e_j) \right \rangle \langle du\big (e_k \big ), du (e_j) \rangle\\
 & = \sum_{i,j,k=1}^m\langle (\nabla ^u_{e_k} du)(e_i) , du (e_j) \rangle^2 \\
 &\quad+ \sum_{\alpha=m+1}^q \sum_{j,k=1}^m \langle du\bigg  (A^{e_{\alpha}}\big (A^{e_{\alpha}}(e_k)\big )\bigg ) , du (e_j) \rangle \langle du\big (e_k \big ), du (e_j) \rangle \\
 & = \sum_{i,j,k=1}^m\langle (\nabla ^u_{e_k} du)(e_i) , du (e_j) \rangle^2 \\
 &\quad+ \sum_{\alpha=m+1}^q \sum_{i=1}^m \left \langle du\bigg  (A^{e_{\alpha}}\big (A^{e_{\alpha}}(e_i)\big )\bigg ) , \sum_{j=1}^m \langle du\big (e_i \big ), du (e_j) \rangle du (e_j) \right \rangle.  \\
\end{aligned}
\end{equation*}
Analogous to \eqref{3.19} for each $1 \le \ell \le q\, ,$
\begin{equation}\label{3.21}
 \langle du \big (A^{v_{\ell}^\bot}(e_i)\big ) , du (e_j) \rangle \langle (\nabla ^u_{e_j} du)(v_{\ell}^\top) , du (e_i) \rangle   = 0\, .
\end{equation}

By \eqref{3.18} and \eqref{3.21} the second integrand in \eqref{3.17}

\begin{equation}\label{3.22}
\begin{aligned}
&\quad  \sum_{\ell=1}^q \sum_{i,j=1}^m \langle  du (e_i), \nabla ^{u}_{e_j} du (v_{\ell}^\top)  \rangle \langle \nabla ^{u}_{e_i} du (v_{\ell}^\top), du (e_j) \rangle \\
& = \sum_{\ell=1}^q \sum_{i,j=1}^m \bigg (\langle du (e_i), (\nabla ^u_{e_j} du)(v_{\ell}^\top)  \rangle + \langle du (e_i), du\left(A^{v_{\ell}^\bot}(e_j)\right )\rangle\bigg )\\
& \quad \cdot \bigg (\langle (\nabla ^u_{e_i} du)(v_{\ell}^\top),du (e_j)\rangle + \langle du\left(A^{v_{\ell}^\bot}(e_i)\right ),du (e_j)\rangle\bigg )\\
& = \sum_{\ell=1}^q \sum_{ij=1}^m \bigg ( \langle du (e_i), (\nabla ^u_{e_j} du)(v_{\ell}^\top)  \rangle  \langle (\nabla ^u_{e_i} du)(v_{\ell}^\top) , du (e_j)\rangle \\
\end{aligned}
\end{equation}
\begin{equation*}
\begin{aligned}
& \quad + \langle du (e_i), du\big (A^{v_{\ell}^\bot}(e_j)\big )\rangle \langle du\big(A^{v_{\ell}^\bot}(e_i)\big ), du (e_j)\rangle\bigg )\\
& = \sum_{i,j,k=1}^m  \langle du (e_i), (\nabla ^u_{e_k} du) (e_{j})  \rangle  \langle (\nabla ^u_{e_k} du)(e_i) , du (e_j) \rangle \\
& \quad + \sum_
{\alpha=m+1}^q \sum_{i,j=1}^m \langle du (e_i), du \big ( A^{e_{\alpha}}(e_j) \big  )\rangle  \langle du \big  (A^{e_{\alpha}}(e_i)\big  ) , du (e_j) \rangle\\
& = \sum_{i,j,k=1}^m  \langle du (e_i), (\nabla ^u_{e_k} du) (e_{j})  \rangle  \langle (\nabla ^u_{e_k} du)(e_i) , du (e_j) \rangle \\
&\quad + \sum_
{\alpha=m+1}^q \sum_{i=1}^m  \left \langle du\big (A^{e_{\alpha}}(e_i)\big ), \sum_{j=1}^m  \langle du(e_i), du\big (  A^{e_{\alpha}}(e_j)\big ) \rangle\, du (e_j) \right \rangle.
\end{aligned}
\end{equation*}

By \eqref{3.18} and \eqref{3.21} the third integrand in \eqref{3.17}

\begin{equation}\label{3.23}
\begin{aligned}
&\sum_{\ell=1}^q\sum_{i,j=1}^m \langle  du (e_i) , du (e_j) \rangle \langle \nabla ^{u}_{e_i} du (v_{\ell}^\top), \nabla ^{u}_{e_j} du (v_{\ell}^\top) \rangle\\
= &\sum_{\ell=1}^q\sum_{i,j=1}^m \langle  du (e_i) , du (e_j) \rangle \langle (\nabla ^u_{e_i} du)(v_{\ell}^\top) + du \left(A^{v_{\ell}^\bot}(e_i)\right ), (\nabla ^u_{e_j} du)(v_{\ell}^\top) + du \left(A^{v_{\ell}^\bot}(e_j)\right )\rangle\\
= &\sum_{\ell=1}^q\sum_{i,j=1}^m \langle  du (e_i) , du (e_j) \rangle \langle (\nabla ^u_{e_i} du)(v_{\ell}^\top) , (\nabla ^u_{e_j} du)(v_{\ell}^\top)\rangle \\
& + \sum_{\ell=1}^q\sum_{i,j=1}^m \langle  du (e_i) , du (e_j) \rangle \langle du \big (A^{v_{\ell}^\bot}(e_i)\big ), du \big (A^{v_{\ell}^\bot}(e_j)\big )\rangle\\
= &\sum_{i,j,k=1}^m \langle  du (e_i) , du (e_j) \rangle \langle (\nabla ^u_{e_k} du)(e_i) , (\nabla ^u_{e_k} du)(e_j)\rangle \\
& + \sum_{\alpha=m+1}^q\sum_{i,j,k=1}^m \langle  du (e_i) , du (e_j) \rangle \left \langle du \big (A^{e_{\alpha}}(e_i)\big ), du \big (\langle  A^{e_{\alpha}}(e_j), e_k\rangle e_k \big ) \right \rangle\\
= &\sum_{i,j,k=1}^m \langle  du (e_i) , du (e_j) \rangle \langle (\nabla ^u_{e_k} du)(e_i) , (\nabla ^u_{e_k} du)(e_j)\rangle
\end{aligned}
\end{equation}
\begin{equation*}
\begin{aligned}
& + \sum_{\alpha=m+1}^q\sum_{i,j,k=1}^m \langle  du (e_i) , du \big (\langle  A^{e_{\alpha}}(e_k), e_j\rangle e_j\big ) \rangle \left \langle du \big (A^{e_{\alpha}}(e_i)\big ), du ( e_k ) \right \rangle\\
= &\sum_{i,j,k=1}^m \langle  du(e_i), du (e_j) \rangle \langle (\nabla ^u_{e_k} du)(e_i) , (\nabla ^u_{e_k} du)(e_j)\rangle \\
& +\sum_{\alpha=m +1}^q\sum_{i,j=1}^m \langle  du(e_i), du \big ( A^{e_{\alpha}}(e_j)\big ) \rangle \left \langle du \big (A^{e_{\alpha}}(e_i)\big ), du(e_j) \right \rangle\\
= &\sum_{i,j,k=1}^m \langle  du (e_i) , du (e_j) \rangle \langle (\nabla ^u_{e_k} du)(e_i) , (\nabla ^u_{e_k} du)(e_j)\rangle \\
&+ \sum_
{\alpha=m+1}^q \sum_{i=1}^m  \left \langle du\big (A^{e_{\alpha}}(e_i)\big ), \sum_{j=1}^m  \langle du(e_i), du\big (  A^{e_{\alpha}}(e_j)\big ) \rangle\, du (e_j) \right \rangle.
\end{aligned}
\end{equation*}

By the Weitzenb\"{o}ck formula for $1$-form [EL, Proposition 1.34, p.13]
\begin{equation}\label{3.24}
\sum _{i=1}^{m} R^{N} \big (du (v_{\ell}^\top), du (e_i) \big ) du (e_i) =  du (\text{Ric} ^M(v_{\ell}^\top)) - \sum _{i=1}^{m}(\nabla_{e_i}\nabla_{e_i} du) (v_{\ell}^\top) + \triangle (du) (v_{\ell}^\top),
\end{equation}
where $\triangle$ is the Hodge Laplacian given by $\triangle = - (d^{\ast} d + d d^{\ast})\, $ in which $d$ is the exterior differential operator, $d^{\ast}$ is the codifferental operator, and $d^{\ast} du = - \sum _{i=1}^m (\nabla^u _{e_i} du) (e_i)\, $.

By \eqref{3.24},  the forth integrand in \eqref{3.17}
\begin{equation}\label{3.25}
\begin{aligned}
&\sum_{\ell=1}^q\sum_{i=1}^m  \left \langle R^{N} \big (du (v_{\ell}^\top), du (e_i) \big ) du (v_{\ell}^\top), \sum_{j=1}^m \langle  du (e_i) , du (e_j) \rangle du (e_j) \right \rangle \\  
= &\sum_{i,j,k=1}^m  - \langle R^{N} \big (du (e_i), du (e_k) \big ) du (e_k),  du (e_j) \rangle \langle  du (e_i) , du (e_j) \rangle  \\
= &\sum_{i,j,k=1}^m \langle du \big ( - \text{Ric} ^M(e_i) \big )+ (\nabla_{e_k}\nabla_{e_k} du) (e_i) - \triangle (du) (e_i),  du (e_j) \rangle \langle  du (e_i) , du (e_j) \rangle  \\
= &\sum_{i,=1}^m \left \langle du \big ( - \text{Ric} ^M(e_i)\big ), \sum_{j=1}^m \langle  du (e_i) , du (e_j) \rangle du (e_j) \right \rangle  \\
& \quad + \sum_{i=1}^m \left \langle - \triangle (du) (e_i), \sum_{j=1}^m \langle  du (e_i) , du (e_j) \rangle du (e_j) \right \rangle   \\
\end{aligned}
\end{equation}
\[
\begin{aligned}
&\quad+  \sum_{i,,j,k=1}^m e_k \big (\langle \nabla_{e_k} du (e_i), du (e_j) \rangle \langle  du (e_i) , du (e_j) \rangle \big )\\
 &\quad- \sum_{i,j,k=1}^m \langle \nabla_{e_k} du (e_i), \nabla_{e_k} du (e_j)\rangle \langle  du (e_i) , du (e_j) \rangle   \\
&\quad- \sum_{i,j,k=1}^m  \langle \nabla_{e_k} du (e_i),  du (e_j)\rangle ^2 \\
  &\quad- \sum_{i,j,k=1}^m  \langle \nabla_{e_k} du (e_i),  du (e_j)\rangle \langle  du (e_i) , \nabla_{e_k} du (e_j) \rangle .
\end{aligned}
\]
Since $u$ is $\Phi$-harmonic, applying  $d(du) = 0\, ,$ and Corollary \ref{C:2.2}, we have
\begin{equation}\label{3.26}
\begin{aligned}
& \int _M \sum_{i=1}^m \left \langle \triangle (du) (e_i), \sum_{j=1}^m \langle  du (e_i) , du (e_j) \rangle du (e_j) \right \rangle\, dx \\
= & \int _M \sum_{i=1}^m \left \langle -(d^{\ast}d + dd^{\ast}) (du) (e_i), \sum_{j=1}^m \langle  du (e_i) , du (e_j) \rangle du (e_j) \right \rangle\, dx\\
= & \int _M \sum_{i=1}^m \left \langle -d^{\ast} (du) (e_i), d^{\ast}\sum_{j=1}^m \langle  du (e_i) , du (e_j) \rangle du (e_j) \right \rangle\, dx\\
= & \int _M  \left \langle -d^{\ast} (du), d^{\ast}\sum_{j=1}^m \langle  du , du (e_j) \rangle du (e_j) \right \rangle\, dx\\
= & \int _M  \left \langle -d^{\ast} (du), \sum_{i=1}^m  \nabla^u _{e_i} \bigg (\sum_{j=1}^m \langle  du , du (e_j) \rangle du (e_j) \bigg ) (e_i)\right \rangle\, dx\\
= & \int _M  \left \langle -d^{\ast} (du), \sum_{i=1}^m \nabla^u _{e_i} \bigg (\sum_{j=1}^m \langle  du  (e_i) , du (e_j) \rangle du (e_j) \bigg )\right \rangle\, dx\\
= & 0.
\end{aligned}
\end{equation}
Furthermore, by \eqref{2.50}
\begin{equation}\label{3.27}
\begin{aligned}
&\sum_{i,,j,k=1}^m e_k \big (\langle \nabla_{e_k} du (e_i), du (e_j) \rangle \langle  du (e_i) , du (e_j) \rangle \big ) \\
\end{aligned}
\end{equation}
\begin{equation*}
\begin{aligned}
& =  \sum_{i,j,k=1}^m  \text{div}_M \big (\langle \nabla_{e_k} du (e_i), du (e_j) \rangle \langle  du (e_i) , du (e_j) \rangle  e_k \big ).
\end{aligned}
\end{equation*}
Substituting \eqref{3.20},\eqref{3.22},\eqref{3.23} and \eqref{3.26} into \eqref{3.17}, and  applying \eqref{3.26} and \eqref{3.27},
we obtain via \eqref{3.7} the desired \eqref{3.16}.
\end{proof}

\section{Average variational method Part II: $\Phi$-SSU and $\Phi$-SU manifolds }

In this section, using the technique in \cite{WY}, we write the average variation formulas in orthogonal notation in terms of the differential matrix $(u_{i\alpha})$ of $u$. This enables us to make estimates on the variation formulas from which we find $\Phi$-SSU manifolds. Applying an average method, we prove that $\Phi$-SSU manifolds are $\Phi$-SU; i.e.,  Theorem $1.1 (a),(b),(c)$, and $(d)$ hold.

\begin{lem}[An Estimate on the Average Variation Formula \eqref{3.8} on the Target of $u$]  Let $\mathsf C_{\alpha}$ be as in \eqref{4.4}. Then
\begin{equation}\label{4.1}
\begin{aligned}
&\sum_{\ell=1}^q\frac{d^2}{dt^2}E_{\Phi}( \mathsf f_t^{\mathsf v_{\ell}^\top} \circ u)_{\big |_{t=0}}\\
& \le \int_M \sum_{\alpha =1}^n \mathsf C_{\alpha}^2 \sum_{\beta =1}^n \bigg (4 \langle  \mathsf B (\mathsf e_{\alpha} , \mathsf e_{\beta} ), \mathsf B (\mathsf e_{\alpha}  , \mathsf e_{\beta} ) \rangle - \langle \mathsf B (\mathsf e_{\alpha}  , \mathsf e_{\alpha} ), \mathsf B (\mathsf e_{\beta}  , \mathsf e_{\beta} \rangle \bigg )\, dx.\\
\end{aligned}
\end{equation}\label{L:4.1}
\end{lem}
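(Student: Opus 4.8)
The plan is to reduce the invariantly-defined average formula \eqref{3.8} of Theorem \ref{T:3.1} to a pointwise algebraic inequality and then integrate. First I would fix a point $x_0 \in M$ and pass to the \emph{orthogonal notation} of \cite{WY}: choose the orthonormal frames $\{e_i\}$ on $M$ and $\{\mathsf e_\alpha\}$ on $N$ adapted to the singular value decomposition of $du$ at $x_0$, so that the pull-back metric is diagonalized, $\langle du(e_i), du(e_j)\rangle = \mathsf C_i\,\delta_{ij}$, with $\mathsf C_i$ as in \eqref{4.4} (and $\mathsf C_i = 0$ for indices beyond the rank of $du$, so the sums may be taken up to $n$). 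Since both integrands in \eqref{3.8} are frame-independent scalars, it suffices to establish \eqref{4.1} pointwise at $x_0$ in this special frame and then integrate over $M$.

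Next I would evaluate the two integrands of \eqref{3.8} separately in this frame. For the $Q^N$-term, diagonalization gives $\sum_j \langle du(e_i), du(e_j)\rangle du(e_j) = \mathsf C_i\,du(e_i)$, so that term collapses to $\sum_i \mathsf C_i^2\,\langle Q^N\mathsf e_i, \mathsf e_i\rangle$; expanding $Q^N$ via $(3.7^{\prime})$ and \eqref{3.3}, together with the Parseval identity $\sum_{\alpha=n+1}^q \langle \mathsf A^{\mathsf v_\alpha}\mathsf e_i, \mathsf e_j\rangle^2 = \langle \mathsf B(\mathsf e_i,\mathsf e_j), \mathsf B(\mathsf e_i,\mathsf e_j)\rangle$ (valid since $\mathsf B(\mathsf e_i,\mathsf e_j)$ is normal and $\{\mathsf v_\alpha\}$ is an orthonormal normal frame), yields
\[
\sum_{i=1}^n \mathsf C_i^2 \sum_{\beta=1}^n \Big(2\langle \mathsf B(\mathsf e_i,\mathsf e_\beta), \mathsf B(\mathsf e_i,\mathsf e_\beta)\rangle - \langle \mathsf B(\mathsf e_i,\mathsf e_i), \mathsf B(\mathsf e_\beta,\mathsf e_\beta)\rangle \Big).
\]
The same substitutions applied to the Weingarten term reduce it to $2\sum_{i,j} \mathsf C_i\mathsf C_j\,\langle \mathsf B(\mathsf e_i,\mathsf e_j), \mathsf B(\mathsf e_i,\mathsf e_j)\rangle$. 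These reductions are routine once the frame is fixed; the only care needed is in repeatedly converting $\mathsf A^{\mathsf v_\alpha}$ into $\mathsf B$ through \eqref{3.3} and summing over the normal frame.

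Finally I would subtract the summed integrand from the claimed bound $\sum_i \mathsf C_i^2\sum_j\big(4\langle \mathsf B(\mathsf e_i,\mathsf e_j), \mathsf B(\mathsf e_i,\mathsf e_j)\rangle - \langle \mathsf B(\mathsf e_i,\mathsf e_i), \mathsf B(\mathsf e_j,\mathsf e_j)\rangle\big)$. The $\langle \mathsf B(\mathsf e_i,\mathsf e_i), \mathsf B(\mathsf e_j,\mathsf e_j)\rangle$ contributions cancel exactly, leaving
\[
2\sum_{i,j} \big(\mathsf C_i^2 - \mathsf C_i\mathsf C_j\big)\,\langle \mathsf B(\mathsf e_i,\mathsf e_j), \mathsf B(\mathsf e_i,\mathsf e_j)\rangle = 2\sum_{i,j}\mathsf C_i(\mathsf C_i - \mathsf C_j)\,|\mathsf B(\mathsf e_i,\mathsf e_j)|^2.
\]
Using $\mathsf B(\mathsf e_i,\mathsf e_j) = \mathsf B(\mathsf e_j,\mathsf e_i)$ and symmetrizing in $i \leftrightarrow j$, this equals $\sum_{i,j}(\mathsf C_i - \mathsf C_j)^2\,|\mathsf B(\mathsf e_i,\mathsf e_j)|^2 \ge 0$, which is precisely the pointwise form of \eqref{4.1}; integrating over $M$ completes the argument. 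I expect the conceptual crux to be this final symmetrization: the naturally occurring coefficients ($2$ from the $Q^N$-term and $2$ from the Weingarten term) are arranged so that the difference against the coefficient $4$ in the $\Phi$-$\operatorname{SSU}$ functional \eqref{1.5} becomes a genuine sum of squares rather than a sign-indefinite remainder. Recognizing that the leftover cross term $2\mathsf C_i(\mathsf C_i - \mathsf C_j)|\mathsf B(\mathsf e_i,\mathsf e_j)|^2$ symmetrizes to something nonnegative is what upgrades the identity of Theorem \ref{T:3.1} into the estimate \eqref{4.1}; keeping the $\mathsf A^{\mathsf v_\alpha}$-to-$\mathsf B$ bookkeeping consistent across the three reductions is the secondary technical point.
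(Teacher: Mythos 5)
Your proposal is correct and follows essentially the same route as the paper's proof: diagonalize the pull-back tensor at a point, reduce the $Q^N$-integrand of \eqref{3.8} to $\sum_{\alpha}\mathsf C_{\alpha}^2\sum_{\beta}\big(2|\mathsf B(\mathsf e_{\alpha},\mathsf e_{\beta})|^2-\langle\mathsf B(\mathsf e_{\alpha},\mathsf e_{\alpha}),\mathsf B(\mathsf e_{\beta},\mathsf e_{\beta})\rangle\big)$ and the Weingarten integrand to $2\sum_{\alpha,\beta}\mathsf C_{\alpha}\mathsf C_{\beta}|\mathsf B(\mathsf e_{\alpha},\mathsf e_{\beta})|^2$, then dominate the cross term. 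Your sum-of-squares symmetrization $\sum_{i,j}(\mathsf C_i-\mathsf C_j)^2|\mathsf B(\mathsf e_i,\mathsf e_j)|^2\ge 0$ is exactly the paper's step in \eqref{4.9}, namely $2\mathsf C_{\alpha}\mathsf C_{\beta}\le \mathsf C_{\alpha}^2+\mathsf C_{\beta}^2$ combined with the symmetry of $|\mathsf B|^2$; the only cosmetic difference is that you work in a full singular-value-decomposition frame, whereas the paper diagonalizes only the $n\times n$ product $(u_{i\alpha})^{\text T}(u_{i\alpha})$ as in \eqref{4.4} and carries the index bookkeeping explicitly.
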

\begin{proof}[Proof of Lemma 4.1]
Denote $(u_{i \alpha})\, 1 \le i \le m, 1 \le \alpha \le n$ the differential matrix of $u: M \to N$ relative to local orthonormal bases
$\{e_1,\cdots,e_m\}$ in $M$ and $\{\mathsf e_1,\cdots,\mathsf e_n\}$ in $N$. That is, \begin{equation}\label{4.2}du(e_i)=\sum_{\alpha=1}^nu_{i\alpha} \mathsf e_{\alpha}\quad \text{for}\quad 1 \le i \le m\, .\end{equation} Let $(u_{i \alpha})^{\text T}$ be the transpose of $(u_{i \alpha})\, .$
Then the product matrix $(u_{i \alpha})^{\text T} \cdot (u_{i \alpha})$ is an $n\times n$ symmetric  matrix with the $\alpha\, ,$ $\beta$ entry over the field of real numbers given by
\begin{eqnarray}\label{4.3}
\left( \sum_{i=1}^mu_{i\alpha}u_{i\beta}\right).
\end{eqnarray}
Without ambiguity, we use the same notations for local orthonormal bases $\{e_1,\cdots,e_m\}$ in $M$ and $\{\mathsf e_1,\cdots,\mathsf e_n\}$ in $N$ so that the product matrix is diagonalizable. That is,
\begin{equation}\label{4.4}
\sum_{i=1}^mu_{i \alpha}u_{i \beta}=
\begin{cases}
\begin{aligned}
0\quad &\text{if}\quad  \beta \neq \alpha\\
\mathsf C_{\alpha}  \quad &\text{if}\quad  \beta = \alpha
\end{aligned}
\end{cases}
\end{equation}
for some $\mathsf C_{\alpha} \ge 0\, .$
Then by \eqref{4.4} and \eqref{1.1} we have

\begin{equation}\label{4.100}
2 e(u) = \sum_{i=1}^m\langle du(e_i),du(e_i)\rangle =\sum_{\alpha =1}^n \mathsf C_{\alpha},
\end{equation}

\begin{equation}\label{4.5}
\begin{aligned}
4 e_{\Phi}(u)&=\sum_{ij=1}^m\langle du(e_i),du(e_j)\rangle^2 =\sum_{ij=1}^m\sum_{\alpha\beta\gamma\delta=1}^n\langle u_{i\alpha} \mathsf e_{\alpha} ,u_{j\beta} \mathsf e_{\beta}\rangle \langle u_{i\gamma} \mathsf e_{\gamma} ,u_{j\delta} \mathsf e_{\delta}\rangle  \\
& =\sum_{ij=1}^m\sum_{\alpha\gamma=1}^n u_{i\alpha}u_{j\alpha}u_{i\gamma}u_{j\gamma} =\sum_{\alpha\gamma=1}^n (\sum_{i=1}^m u_{i\alpha}u_{i\gamma}\sum_{j=1}^mu_{j\alpha}u_{j\gamma}) =\sum_{\alpha=1}^n\mathsf C_{\alpha}^2
\end{aligned}
\end{equation}
and via \eqref{3.3},
\begin{equation}\label{4.6}
\begin{aligned}
\mathsf A^{\mathsf e_{\nu}} \big (du(e_i)\big )&=\sum_{\alpha = 1}^n u_{i\alpha} \mathsf A^{\mathsf e_{\nu}} (\mathsf e_{\alpha}  ) = \sum_{\alpha\beta = 1}^n u_{i\alpha} \langle \mathsf A^{\mathsf e_{\nu}} (\mathsf e_{\alpha}  ), \mathsf e_{\beta} \rangle \mathsf e_{\beta} \\
& = \sum_{\alpha\beta = 1}^n u_{i\alpha} \langle \mathsf B (\mathsf e_{\alpha}  , \mathsf e_{\beta} ), \mathsf e_{\nu} \rangle \mathsf e_{\beta}.\end{aligned}
\end{equation}

In view of \eqref{4.6}, \eqref{3.3} and \eqref{4.4}, we have

\begin{equation}\label{4.7}
\begin{aligned}
&\sum_{ij=1}^m\sum_{\nu= n+1}^q 2 \langle \mathsf A^{\mathsf e_{\nu}} \mathsf A^{\mathsf e_{\nu}} \big (du(e_i)\big ), du(e_j) \rangle \langle du(e_i), du(e_j) \rangle\\
&=\sum_{ij=1}^m\sum_{\nu= n+1}^q\sum_{\alpha\beta\gamma\delta\tau\mu = 1}^n 2 u_{i\alpha} \langle \mathsf B (\mathsf e_{\alpha}  , \mathsf e_{\beta} ), \mathsf e_{\nu} \rangle \langle \mathsf A^{\mathsf e_{\nu}} (\mathsf e_{\beta}), \mathsf e_{\gamma} \rangle \langle \mathsf e_{\gamma }, u_{j\delta}\mathsf e_{\delta}\rangle \langle u_{i\tau}\mathsf e_{\tau }, u_{j\mu}\mathsf e_{\mu}\rangle\\
& = \sum_{\alpha\beta\gamma\tau = 1}^n 2 \big (\sum_{i=1}^m u_{i\alpha} u_{i\tau}\sum_{j=1}^mu_{j\gamma}u_{j\tau}\sum_{\nu= n+1}^q \langle \mathsf B (\mathsf e_{\alpha}  , \mathsf e_{\beta} ), \mathsf e_{\nu} \rangle  \langle \mathsf B (\mathsf e_{\gamma}  , \mathsf e_{\beta}), \mathsf e_{\nu} \rangle \big ) \\
& = \sum_{\alpha\beta = 1}^n 2 \mathsf C_{\alpha}^2 \langle \mathsf B (\mathsf e_{\alpha}  , \mathsf e_{\beta} ), \mathsf B (\mathsf e_{\alpha}  , \mathsf e_{\beta} ) \rangle.
 \end{aligned}
\end{equation}

Similarly, by \eqref{4.6}, \eqref{3.3} and \eqref{4.4}, we have

\begin{equation}\label{4.8}
\begin{aligned}
& \sum_{ij=1}^m\sum_{\nu= n+1}^q -\langle \text{tr} (\mathsf A^{\mathsf e_{\nu}} )\mathsf A^{\mathsf e_{\nu}} \big (du(e_i)\big ), du(e_j) \rangle \langle du(e_i), du(e_j) \rangle\\
&= \sum_{ij=1}^m\sum_{\nu= n+1}^q\sum_{\alpha\beta\gamma\delta\tau\mu = 1}^n -\langle \mathsf A^{\mathsf e_{\nu}}(\mathsf e_{\gamma} ), \mathsf e_{\gamma} \rangle u_{i\alpha} \langle \mathsf B (\mathsf e_{\alpha}  , \mathsf e_{\beta} ), \mathsf e_{\nu} \rangle \langle \mathsf e_{\beta}, u_{j\delta}\mathsf e_{\delta}\rangle \langle u_{i\tau}\mathsf e_{\tau }, u_{j\mu}\mathsf e_{\mu}\rangle\\
& = \sum_{\alpha\beta\gamma\tau = 1}^n -\big (\sum_{i=1}^m u_{i\alpha} u_{i\tau}\sum_{j=1}^mu_{j\beta}u_{j\tau}\sum_{\nu= n+1}^q \langle \mathsf B (\mathsf e_{\gamma}  , \mathsf e_{\gamma} ), \mathsf e_{\nu} \rangle  \langle \mathsf B (\mathsf e_{\alpha}  , \mathsf e_{\beta} ), \mathsf e_{\nu} \rangle \big ) \\
& = \sum_{\alpha\gamma = 1}^n -\mathsf C_{\alpha}^2 \langle \mathsf B (\mathsf e_{\gamma}  , \mathsf e_{\gamma} ), \mathsf B (\mathsf e_{\alpha}  , \mathsf e_{\alpha}) \rangle \\
& = \sum_{\alpha\beta = 1}^n - \mathsf C_{\alpha}^2 \langle \mathsf B (\mathsf e_{\alpha}  , \mathsf e_{\alpha} ), \mathsf B (\mathsf e_{\beta}  , \mathsf e_{\beta}) \rangle.
 \end{aligned}
\end{equation}

In view of \eqref{4.6}, \eqref{3.3} and \eqref{4.4}, we have

\begin{equation}\label{4.9}
\begin{aligned}
&\sum_{ij=1}^m\sum_{\nu= n+1}^q 2 \langle  \mathsf A^{\mathsf e_{\nu}} \big (du(e_i)\big ), du(e_j) \rangle \langle \mathsf A^{\mathsf e_{\nu}} \big (du(e_i)\big ), du(e_j) \rangle\\
&=  \sum_{ij=1}^m\sum_{\nu= n+1}^q\sum_{\alpha\beta\gamma\delta\tau\mu = 1}^n  2 u_{i\alpha} \langle \mathsf B (\mathsf e_{\alpha}  , \mathsf e_{\beta} ), \mathsf e_{\nu} \rangle \langle \mathsf e_{\beta}, u_{j\gamma}\mathsf e_{\gamma}\rangle u_{i\delta} \langle \mathsf B (\mathsf e_{\delta}  , \mathsf e_{\tau} ), \mathsf e_{\nu} \rangle \langle \mathsf e_{\tau}, u_{j\mu}\mathsf e_{\mu}\rangle\\
& = \sum_{\alpha\beta\delta\tau = 1}^n 2 \big (\sum_{i=1}^m u_{i\alpha} u_{i\delta}\sum_{j=1}^mu_{j\beta}u_{j\tau}\sum_{\nu= n+1}^q \langle \mathsf B (\mathsf e_{\alpha}  , \mathsf e_{\beta} ), \mathsf e_{\nu} \rangle  \langle \mathsf B (\mathsf e_{\delta}  , \mathsf e_{\tau} ), \mathsf e_{\nu} \rangle \big ) \\
& = \sum_{\alpha\beta = 1}^n 2 \mathsf C_{\alpha} \mathsf C_{\beta}\langle \mathsf B (\mathsf e_{\alpha}  , \mathsf e_{\beta} ), \mathsf B (\mathsf e_{\alpha}  , \mathsf e_{\beta} ) \rangle \le \sum_{\alpha\beta = 1}^n (\mathsf C_{\alpha}^2 + \mathsf C_{\beta}^2)\langle \mathsf B (\mathsf e_{\alpha} , \mathsf e_{\beta} ), \mathsf B (\mathsf e_{\alpha}  , \mathsf e_{\beta} ) \rangle \\
& =  \sum_{\alpha\beta = 1}^n 2 \mathsf C_{\alpha}^2 \langle \mathsf B (\mathsf e_{\alpha}  , \mathsf e_{\beta} ), \mathsf B (\mathsf e_{\alpha}  , \mathsf e_{\beta} ) \rangle.
 \end{aligned}
\end{equation}

Substituting \eqref{4.7}, \eqref{4.8} and \eqref{4.9} into \eqref{3.8}, we obtain the desired \eqref{4.1}.
\end{proof}

\begin{lem}[An Estimate on the Average Variation Formula \eqref{3.16} on the Domain of $u$]  Let $C_i$ be as in \eqref{4.12}. Then
\begin{equation}\label{4.10}
\begin{aligned}
\sum_{\ell=1}^q\frac{d^2}{dt^2}E_{\Phi}( u \circ f_t^{v_{\ell}^\top})_{\big |_{t=0}}& \le \int_M \sum_{i =1}^m  C_{i}^2 \sum_{j =1}^m \bigg (4 \langle   B (e_{i},  e_{j} ),  B (e_{i}  , e_{j} ) \rangle - \langle  B (e_{i}  , e_{i} ),  B(e_{j}  , e_{j} \rangle \bigg )\, dx\\
\end{aligned}
\end{equation}

\end{lem}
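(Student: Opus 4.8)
The plan is to mirror the proof of Lemma \ref{L:4.1}, but starting from the average variation formula \eqref{3.16} on the domain in place of \eqref{3.8} on the target. First I would fix the differential matrix $(u_{i\alpha})$ of \eqref{4.2} and, as in \eqref{4.4}, choose the local orthonormal bases so that the $m\times m$ product matrix $(u_{i\alpha})(u_{i\alpha})^{\text T}$ is diagonal, giving constants $C_i \ge 0$ with
\begin{equation}\label{4.12}
\sum_{\alpha=1}^n u_{i\alpha}u_{j\alpha}=
\begin{cases}
0 & \text{if}\quad j \neq i,\\
C_i & \text{if}\quad j = i,
\end{cases}
\end{equation}
so that $\langle du(e_i), du(e_j)\rangle = C_i\,\delta_{ij}$. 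Two identities then drive the computation: the inner sum collapses to $\sum_{j=1}^m \langle du(e_i), du(e_j)\rangle\, du(e_j) = C_i\, du(e_i)$, and, using \eqref{3.3} together with $A^{e_\alpha}(e_i) = \sum_{k=1}^m \langle B(e_i,e_k), e_\alpha\rangle e_k$, one obtains the single clean relation $\langle du(A^{e_\alpha}(e_i)), du(e_j)\rangle = C_j \langle B(e_i,e_j), e_\alpha\rangle$.

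Next I would expand the first integrand of \eqref{3.16} by inserting $Q^M = \sum_{\alpha=m+1}^q (2A^{e_\alpha}A^{e_\alpha} - \text{tr}(A^{e_\alpha})A^{e_\alpha})$ from \eqref{3.7} and splitting it into two pieces. Applying the relation above twice to the $2A^{e_\alpha}A^{e_\alpha}$ piece, and using $\sum_{\alpha=m+1}^q \langle B(e_i,e_j),e_\alpha\rangle^2 = \langle B(e_i,e_j), B(e_i,e_j)\rangle$, produces $2\sum_{i,j=1}^m C_i^2 \langle B(e_i,e_j), B(e_i,e_j)\rangle$; the $-\text{tr}(A^{e_\alpha})A^{e_\alpha}$ piece, after writing $\text{tr}(A^{e_\alpha}) = \sum_{k=1}^m \langle B(e_k,e_k), e_\alpha\rangle$, produces $-\sum_{i,j=1}^m C_i^2 \langle B(e_i,e_i), B(e_j,e_j)\rangle$. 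These are the domain analogues of \eqref{4.7} and \eqref{4.8}.

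For the second integrand of \eqref{3.16}, the same relation applied to both factors gives exactly $2\sum_{i,j=1}^m C_i C_j \langle B(e_i,e_j), B(e_i,e_j)\rangle$. This is the one place where an \emph{equality} must be relaxed to an \emph{inequality}, and is therefore the crux: exactly as in \eqref{4.9}, I would invoke $2 C_i C_j \le C_i^2 + C_j^2$ together with the symmetry of $\langle B(e_i,e_j), B(e_i,e_j)\rangle$ in the indices $i,j$ to bound this term by $2\sum_{i,j=1}^m C_i^2 \langle B(e_i,e_j), B(e_i,e_j)\rangle$. Summing the three contributions then yields the integrand $4\sum_{i,j=1}^m C_i^2 \langle B(e_i,e_j), B(e_i,e_j)\rangle - \sum_{i,j=1}^m C_i^2 \langle B(e_i,e_i), B(e_j,e_j)\rangle$, which upon integration over $M$ is precisely \eqref{4.10}. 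I expect no real difficulty beyond careful index bookkeeping; the only substantive step is the symmetrization inequality, entirely parallel to the target case.
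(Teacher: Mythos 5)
Your proposal is correct and follows essentially the same route as the paper's proof: the same diagonalization \eqref{4.12}, the same decomposition of \eqref{3.16} into the $2A^{e_\alpha}A^{e_\alpha}$, $-\operatorname{tr}(A^{e_\alpha})A^{e_\alpha}$, and cross terms (the paper's \eqref{4.15}, \eqref{4.16}, \eqref{4.17}), and the same single relaxation $2C_iC_j \le C_i^2 + C_j^2$ in the cross term. Your reformulation via $\langle du(e_i),du(e_j)\rangle = C_i\delta_{ij}$ and $\langle du(A^{e_\alpha}(e_i)),du(e_j)\rangle = C_j\langle B(e_i,e_j),e_\alpha\rangle$ merely streamlines the paper's index bookkeeping without changing the argument.
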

\begin{proof}[Proof of Lemma 4.2]
In the following, we consider  the product matrix $(u_{i \alpha}) \cdot (u_{i \alpha})^{\text T} $ to be an $m\times m$ symmetric  matrix with the $i, j$ entry over the field of real numbers given by
\begin{eqnarray}\label{4.11}
\left( \sum_{\alpha =1}^n u_{i\alpha}u_{j\alpha}\right).
\end{eqnarray}
Without ambiguity, we use the same notations for local orthonormal bases $\{e_1,\cdots,e_m\}$ in $M$ and $\{\mathsf e_1,\cdots,\mathsf e_n\}$ in $N$ so that the product matrix is diagonalizable. That is,
\begin{equation}\label{4.12}
\sum_{\alpha=1}^n u_{i \alpha}u_{j \alpha}=
\begin{cases}
\begin{aligned}
0\quad &\text{if}\quad  j \neq i\\
 C_{i}  \quad &\text{if}\quad  j = i
\end{aligned}
\end{cases}
\end{equation}
for some $ C_{i} \ge 0\, .$
Then by \eqref{4.2} and \eqref{1.1}, we have

\begin{equation}\label{4.200}
2 e(u) = \sum_{i=1}^m\langle du(e_i),du(e_i)\rangle =\sum_{i=1}^m C_{i},
\end{equation}

\begin{equation}\label{4.13}
\begin{aligned}
4 e_{\Phi}(u)&=\sum_{ij=1}^m\langle du(e_i),du(e_j)\rangle^2 =\sum_{ij=1}^m\sum_{\alpha\beta\gamma\delta=1}^n\langle u_{i\alpha} \mathsf e_{\alpha} ,u_{j\beta} \mathsf e_{\beta}\rangle \langle u_{i\gamma} \mathsf e_{\gamma} ,u_{j\delta} \mathsf e_{\delta}\rangle  \\
& =\sum_{ij=1}^m\sum_{\alpha\gamma=1}^n u_{i\alpha}u_{j\alpha}u_{i\gamma}u_{j\gamma} =\sum_{ij=1}^m (\sum_{\alpha=1}^n u_{i\alpha}u_{j\alpha}\sum_{\gamma=1}^nu_{i\gamma}u_{j\gamma}) =\sum_{i=1}^m  C_{i}^2
\end{aligned}
\end{equation}
and via \eqref{3.3},
\begin{equation}\label{4.14}
\begin{aligned}
du \big (A^{e_{\nu}} (e_i)\big )&=\sum_{k = 1}^mdu \big ( \langle A^{e_{\nu}} (e_{i}  ), e_{k} \rangle e_{k}\big ) \\
& = \sum_{k = 1}^m \sum_{\alpha = 1}^n u_{k\alpha} \langle  B(e_{i}  , e_{k} ), e_{\nu} \rangle \mathsf e_{\alpha}.\end{aligned}
\end{equation}

In view of \eqref{4.14}, \eqref{3.3} and \eqref{4.12}, we have

\begin{equation}\label{4.15}
\begin{aligned}
&\sum_{ij=1}^m\sum_{\nu= n+1}^q \langle du \big (2 A^{e_{\nu}} A^{e_{\nu}} (e_i)\big ), du(e_j) \rangle \langle du(e_i), du(e_j) \rangle\\
&=\sum_{ijk=1}^m\sum_{\nu= n+1}^q\sum_{\alpha\beta\gamma = 1}^n 2  \langle  B (e_{i}  , e_{k} ), e_{\nu} \rangle \langle du \big (A^{e_{\nu}} (e_{k})\big ), u_{j\alpha}\mathsf e_{\alpha}\rangle \langle u_{i\beta}\mathsf e_{\beta }, u_{j\gamma}\mathsf e_{\gamma}\rangle\\
& = \sum_{ijk\hbar = 1}^m 2 \bigg (\sum_{\beta=1}^n u_{i\beta} u_{j\beta}\sum_{\alpha=1}^nu_{\hbar\alpha}u_{j\alpha}\sum_{\nu= n+1}^q \langle  B (e_{i}  , e_{k} ), e_{\nu} \rangle  \langle  B (e_{k}  , e_{\hbar}), e_{\nu} \rangle \bigg ) \\
& = \sum_{ij = 1}^m 2  C_{i}^2 \langle  B (e_{i}  , e_{j} ), B (e_{i}  , e_{j}) \rangle.
 \end{aligned}
\end{equation}

Similarly, by \eqref{4.14}, \eqref{3.3} and \eqref{4.12}, we have

\begin{equation}\label{4.16}
\begin{aligned}
& \sum_{ij=1}^m\sum_{\nu= n+1}^q -\langle \text{tr} (A^{e_{\nu}} ) du \big ( A^{e_{\nu}} (e_i)\big ), du(e_j) \rangle \langle du(e_i), du(e_j) \rangle\\
&= \sum_{ijk\hbar=1}^m\sum_{\nu= n+1}^q\sum_{\alpha\beta\gamma\delta = 1}^n -\langle A^{e_{\nu}}(e_{\hbar} ), e_{\hbar} \rangle u_{k\alpha} \langle  B (e_{i}  , e_{k} ), e_{\nu} \rangle \langle \mathsf e_{\alpha}, u_{j\beta}\mathsf e_{\beta}\rangle \langle u_{i\gamma}\mathsf e_{\gamma }, u_{j\delta}\mathsf e_{\delta}\rangle\\
& = \sum_{ijk\hbar = 1}^m -\big (\sum_{\alpha=1}^n u_{k\alpha} u_{j\alpha}\sum_{\gamma=1}^nu_{i\gamma}u_{j\gamma}\sum_{\nu= n+1}^q \langle  B (e_{\hbar}  , e_{\hbar} ), e_{\nu} \rangle  \langle  B (e_{i}  , e_{k} ), e_{\nu} \rangle \big ) \\
& = \sum_{i\hbar = 1}^m - C_{i}^2 \langle  B (e_{\hbar}  , e_{\hbar} ), B (e_{i}  , e_{i}) \rangle \\
& = \sum_{ij = 1}^m - C_{i}^2 \langle  B (e_{i}  , e_{i} ), B (e_{j}  , e_{j}) \rangle.
 \end{aligned}
\end{equation}

In view of \eqref{4.14}, \eqref{3.3} and \eqref{4.12}, we have

\begin{equation}\label{4.17}
\begin{aligned}
&\sum_{ij=1}^m\sum_{\nu= n+1}^q 2 \langle  du \big ( A^{e_{\nu}} (e_i)\big ), du(e_j) \rangle \langle du(e_i), du \big ( A^{e_{\nu}} (e_j)\big ) \rangle
\end{aligned}
\end{equation}
\begin{equation*}
\begin{aligned}
&=  \sum_{ijk\hbar=1}^m\sum_{\nu= n+1}^q\sum_{\alpha\beta\gamma\delta = 1}^n  2 u_{k\alpha} \langle  B (e_{i}  , e_{k} ), e_{\nu} \rangle \langle
\mathsf e_{\alpha}, u_{j\gamma}\mathsf e_{\gamma}\rangle u_{\hbar\beta} \langle  B (e_{j}  , e_{\hbar} ), e_{\nu} \rangle
\langle u_{i\delta}\mathsf e_{\delta}, \mathsf e_{\beta}\rangle \\
& = \sum_{ij\hbar k = 1}^n 2 \big (\sum_{\alpha=1}^n u_{k\alpha} u_{j\alpha}\sum_{\beta=1}^nu_{\hbar\beta}u_{i\beta}\sum_{\nu= n+1}^q \langle  B (e_{i}  , e_{k} ),  B (e_{j}  , e_{\hbar} )
 \rangle \big ) \\
& = \sum_{ij = 1}^m 2  C_{i}  C_{j}\langle  B (e_i  , e_{j} ), B (e_{i}  , e_{j} ) \rangle \le \sum_{ij = 1}^m ( C_{i}^2 +  C_{j}^2)\langle B (e_{i} , e_{j} ),  B (e_{i}  , e_{j} ) \rangle \\
& =  \sum_{ij = 1}^m 2  C_{i}^2 \langle  B (e_{i},  e_{j} ),  B (e_{i}  , e_{j} ) \rangle.
 \end{aligned}
\end{equation*}

Substituting \eqref{4.15}, \eqref{4.16} and \eqref{4.17} into \eqref{3.16}, we obtain the desired
\eqref{4.10}.

\end{proof}
\begin{rem} In examining the factors in the estimates in \eqref{4.1} and \eqref{4.10},
\begin{equation}\label{4.20}
\begin{aligned}
& \sum_{\beta =1}^n 4 \langle  \mathsf B (\mathsf e_{\alpha} , \mathsf e_{\beta} ), \mathsf B (\mathsf e_{\alpha}  , \mathsf e_{\beta} ) \rangle - \langle \mathsf B (\mathsf e_{\alpha}  , \mathsf e_{\alpha} ), \mathsf B (\mathsf e_{\beta}  , \mathsf e_{\beta}) \rangle   < 0
\quad \operatorname{on}\quad N\\
\text{and}\quad & \sum_{i =1}^m 4 \langle  B (e_{i},  e_{j} ), B (e_{i}  , e_{j} ) \rangle - \langle B (e_{i}  , e_{i} ), B (e_{j}  , e_{j} )\rangle  < 0\quad \operatorname{on}\quad M,
\end{aligned}
\end{equation}
which yield information on $u$ into $N$ and from $M$ respectively, we find $\Phi$-superstrongly unstable manifold as defined in Definition \ref {D:1.3} or \eqref{1.5}.
\end{rem}

Now we are ready to apply an {\it algebraic} average method to prove \smallskip

\noindent
{\bf Theorem \ref{T:1.1} (a)}
{\it For every compact $\Phi$-$\operatorname{SSU}$ manifold $M$, there are no nonconstant smooth stable $\Phi$-harmonic map $u: M \to N\, $ into any compact manifold $N$}
\smallskip

\noindent
{\bf Theorem \ref{T:1.1} (c)}
{\it For every compact $\Phi$-$\operatorname{SSU}$ manifold $N$, there are no nonconstant smooth stable $\Phi$-harmonic map $u: M \to N\, $ from any compact manifold $M$ $ (u$ is not necessarily $\Phi$-harmonic$)$.}

\begin{proof}[Proof of Theorem \ref{T:1.1} $(a)$ and Theorem \ref{T:1.1} $(c)$]
If $u$ is not constant,
then by \eqref{4.100} $(\text{resp}. \eqref{4.200})$, there exists $1 \le \alpha \le n$ $(\text{resp}.\,  1 \le i \le m \big )$ and a domain $\mathsf D \subset N$ $(\text{resp}.\,  D \subset M),$ over which  $\mathsf C_{\alpha} > 0\, (\text{resp}.\,  C_i > 0 )\, .$ Hence, if $N$ is $\Phi$-SSU $(\text{resp}.\, M$ is $\Phi$-SSU$)$  \begin{equation}\label{4.21}
\begin{aligned}
& \mathsf C_{\alpha}^2 \sum_{\beta =1}^n \big (4 \langle  \mathsf B (\mathsf e_{\alpha},  \mathsf e_{\beta} ), \mathsf  B (\mathsf e_{\alpha}  , \mathsf e_{\beta} ) \rangle - \langle \mathsf  B (\mathsf e_{\alpha}  , \mathsf e_{\alpha} ), \mathsf  B (\mathsf e_{\beta}  , \mathsf e_{\beta} \rangle \big )< 0\quad \text{on}\quad \mathsf D\\
\bigg ( \text{resp}.\quad  & C_{i}^2 \sum_{j =1}^m \big (4 \langle  B (e_{i},  e_{j} ), B (e_{i}  , e_{j} ) \rangle - \langle B (e_{i}  , e_{i} ), B (e_{j}  , e_{j} ) \rangle \big ) < 0 \quad \text{on}\quad D \bigg )
\end{aligned}
\end{equation}
By Lemma \ref{L:4.1} (resp. Lemma \ref{4.2}) we have via \eqref{4.21} \begin{equation}\label{4.22}
\begin{aligned}
\sum_{\ell=1}^q\frac{d^2}{dt^2}E_{\Phi}( \mathsf f_t^{\mathsf v_{\ell}^\top} \circ u)_{\big |_{t=0}} & < 0\\
\big (\text{resp}. \sum_{\ell=1}^q\frac{d^2}{dt^2}E_{\Phi}( u \circ f_t^{v_{\ell}^\top})_{\big |_{t=0}} & < 0 \big ).\end{aligned}
\end{equation}
 By the algebraic average method, \eqref{4.22} implies that there exists a number $\ell\, ,$ $1 \le \ell \le q$ such that 
 $$\begin{aligned}
 & \frac{d^2}{dt^2}E_{\Phi}( \mathsf f_t^{\mathsf v_{\ell}^\top} \circ u)_{\big |_{t=0}} < 0.\, \text{Or}\, \sum_{\ell=1}^q\frac{d^2}{dt^2}E_{\Phi}( \mathsf f_t^{\mathsf v_{\ell}^\top} \circ u)_{\big |_{t=0}} \ge 0,\, \text {a}\, \text{contradiction}.\\
\bigg ( \text{resp}.\quad  & \frac{d^2}{dt^2}E_{\Phi}( u \circ f_t^{v_{\ell}^\top})_{\big |_{t=0}} < 0.\,  \text{Or}\, \sum_{\ell=1}^q\frac{d^2}{dt^2}E_{\Phi}( u \circ f_t^{v_{\ell}^\top})_{\big |_{t=0}}\ge 0,\, \text {a}\, \text{contradiction}.\bigg )
\end{aligned}
$$
This means that there exists a vector field $\mathsf v_{\ell}^{\top} = \mathsf e_{\ell}$ (resp. $v_{\ell}^{\top} =  e_{\ell}$) for some $1 \le \ell \le q$, along which the variation $$\begin{aligned}
& \mathsf f_t^{\mathsf v_{\ell}^\top} \circ u\, \text{decreases}\,  \text{the}\, \Phi\text{-energy}\, \text{of}\, \text{any}\,  \text{nonconstant}\,  \text{map}\, u\\
\bigg ( \text{resp}.\quad  & u \circ f_t^{v_{\ell}^\top} \text{decreases}\,  \text{the}\, \Phi\text{-energy}\, \text{of}\, \text{any}\,  \text{nonconstant}\,  \text{map}\, u.
\bigg )
\end{aligned}$$ That is, 
$$\begin{aligned}
u,\quad \text{not}\, \text{necessarily}\, & \Phi\text{-harmonic}\quad \text{is}\, \, \text{not}\, \, \text{a}\, \, \text{nonconstant}\, \, \Phi\text{-stable}\, \, \text{map}\\
\bigg ( \text{resp}.\quad  & u\quad \text{is}\, \, \text{not}\, \, \text{a}\, \, \text{nonconstant}\, \, \Phi\text{-stable}\, \, \text{map}
\bigg ) .
\end{aligned}$$
\end{proof}

\noindent
{\bf Theorem \ref{T:1.1} (b)}
{\it If $N$ is $\Phi$-$\operatorname{SSU}$, then for every compact manifold $M$, the homotopic class of any map from $M$ into $N$ contains elements of arbitrarily small $\Phi$-energy.}
\smallskip

\begin{lem}\label{L:4.3}
If $N$ is a compact $\Phi$-SSU manifold, then there is a number $0<\rho<1$ such that for any compact manifold $M$ and any map $u:M\rightarrow N$ there is a map $u_1:M\rightarrow N$ homotopic to $u$ with $\Phi(u_1)\leq \rho \Phi(u)$.
\end{lem}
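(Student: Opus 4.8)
The plan is to manufacture, from an arbitrary $u$, a homotopic competitor of strictly smaller $\Phi$-energy with a reduction factor $\rho$ that depends only on the geometry of $N$. The competitors will be the maps $\mathsf f_t^{\mathsf v_{\ell}^\top}\circ u$ associated to the isometric immersion $N\hookrightarrow\mathbb R^q$ of Definition \ref{D:1.3} — precisely the variations that appear in Theorem \ref{T:3.1}. Each $\mathsf f_t^{\mathsf v_{\ell}^\top}$ is the time-$t$ flow of a smooth vector field on the compact manifold $N$, hence a diffeomorphism isotopic to $\mathrm{id}_N$ through $\{\mathsf f_s^{\mathsf v_{\ell}^\top}\}$; composing the isotopy with $u$ shows $\mathsf f_t^{\mathsf v_{\ell}^\top}\circ u\simeq u$, so every competitor stays in the homotopy class of $u$.

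First I would extract a uniform constant from the $\Phi$-SSU hypothesis. Since the functional $\mathsf F_y(\mathsf x)$ of \eqref{1.5} is continuous, homogeneous of degree two in $\mathsf x$, and negative on the compact unit tangent bundle of the compact $N$, there is $\delta>0$ with $\mathsf F_y(\mathsf x)\le-\delta|\mathsf x|^2$ for all $\mathsf x\in TN$. Feeding this into the estimate \eqref{4.1} of Lemma \ref{L:4.1} (the bracket there is exactly $\mathsf F_y(\mathsf e_{\alpha})$ with $\mathsf e_{\alpha}$ a unit vector) and using $4e_{\Phi}(u)=\sum_{\alpha}\mathsf C_{\alpha}^2$ from \eqref{4.5}, the average second variation at $t=0$ obeys
\[\sum_{\ell=1}^q\frac{d^2}{dt^2}E_{\Phi}(\mathsf f_t^{\mathsf v_{\ell}^\top}\circ u)\Big|_{t=0}\le-\delta\int_M\sum_{\alpha=1}^n\mathsf C_{\alpha}^2\,dx=-4\delta\,E_{\Phi}(u).\]
To kill the first-order term I would symmetrize: put $\Theta(t)=\sum_{\ell=1}^q E_{\Phi}(\mathsf f_t^{\mathsf v_{\ell}^\top}\circ u)$ and $\widetilde\Theta(t)=\Theta(t)+\Theta(-t)$, which is even, so $\widetilde\Theta'(0)=0$, while $\widetilde\Theta(0)=2q\,E_{\Phi}(u)$ and $\widetilde\Theta''(0)=2\Theta''(0)\le-8\delta\,E_{\Phi}(u)$. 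The $2q$ competing maps are $\mathsf f_{\pm t}^{\mathsf v_{\ell}^\top}\circ u$.

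Next I would run a Taylor argument with a uniform remainder. Because $\widetilde\Theta'(0)=0$, Taylor's theorem gives $\widetilde\Theta(t)=2q\,E_{\Phi}(u)+\tfrac12\widetilde\Theta''(0)\,t^2+\tfrac16\widetilde\Theta'''(\xi)\,t^3$ for some $\xi\in(0,t)$. The crucial point is that $\widetilde\Theta'''(\xi)$ is bounded by $C\,E_{\Phi}(u)$ with $C=C(N)$ independent of $u$ and of $M$: each $\tfrac{d^k}{dt^k}E_{\Phi}(\mathsf f_t^{\mathsf v_{\ell}^\top}\circ u)$ is an integral over $M$ of an expression quartic in $d(\mathsf f_t^{\mathsf v_{\ell}^\top}\circ u)=d\mathsf f_t^{\mathsf v_{\ell}^\top}\circ du$, weighted by the differential of the flow and its covariant $t$-derivatives, all uniformly bounded on the compact $N$ for $t$ in a fixed interval, say $t\in[0,1]$; hence the integrand is pointwise $\le C'|du|^4$ and its integral $\le C\,E_{\Phi}(u)$. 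Choosing $t_\ast=\min\{1,12\delta/C\}$ makes the cubic remainder absorb at most half the quadratic gain, so $\widetilde\Theta(t_\ast)\le(2q-2\delta t_\ast^2)E_{\Phi}(u)$. By the algebraic (pigeonhole) average one of the $2q$ maps $u_1:=\mathsf f_{\pm t_\ast}^{\mathsf v_{\ell}^\top}\circ u$ then satisfies $E_{\Phi}(u_1)\le\big(1-\tfrac{\delta t_\ast^2}{q}\big)E_{\Phi}(u)$, and $\rho:=1-\tfrac{\delta t_\ast^2}{q}\in(0,1)$ together with $u_1\simeq u$ finishes the argument.

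I expect the main obstacle to be exactly the uniformity of $\rho$, i.e. the bound $|\widetilde\Theta'''|\le C(N)\,E_{\Phi}(u)$. The negative second-variation estimate at $t=0$ only guarantees an infinitesimal decrease whose usable $t$-range could a priori shrink with $u$; forcing the reduction factor to be independent of $u$ and $M$ requires controlling the $\Phi$-energy along the entire flow over a fixed $t$-interval, which is precisely where the compactness of $N$ — uniform bounds on $\mathsf B$, on the curvature of $N$, and on the flow of $\mathsf v_{\ell}^\top$ — must be used decisively. A secondary bookkeeping matter is to check that the first-order contributions genuinely cancel under $t\mapsto-t$ and that the estimate \eqref{4.1} still drives the computation after symmetrization.
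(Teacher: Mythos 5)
Your proposal is correct and follows essentially the same route as the paper's own proof: both extract a uniform negativity constant $\kappa$ from compactness of the unit tangent bundle, feed it into the average estimate \eqref{4.1} via \eqref{4.5}, bound the third $t$-derivative of the $\Phi$-energy along the flows uniformly on a fixed $t$-interval (the pointwise compactness/homogeneity bound plus $\int_M |du|^4\, dx \le 4m\, E_{\Phi}(u)$, exactly as in \eqref{4.25} and \eqref{4.31}), and conclude with a quantitative Taylor argument and a pigeonhole over the finite family of flow-composed competitors, which are homotopic to $u$ since each flow map is isotopic to $\operatorname{Id}_N$. The only cosmetic differences are that you kill the first-order term by symmetrizing over $t \mapsto -t$ (doubling the competitor family to $2q$ maps and pigeonholing after the Taylor step), whereas the paper first pigeonholes the second variation to pick one index $\ell$ and then chooses the sign $\mathsf V = \pm \mathsf v_{\ell}^{\top}$ to make the first variation nonpositive.
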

\begin{proof}
Let $\widehat{T}_yN$ be the space of the unit tangent vectors to $N$ at the point $y \in N$. Since $N$ is $\Phi$-SSU and $\widehat{T}_yN$ is compact, by \eqref{1.5}, there exists $\kappa > 0$ such that for every $y \in N\, $ and every $\mathsf x \in \widehat{T}_yN\, ,$
\begin{align}\label{4.23}
\mathsf F_y(\mathsf x) < - q \kappa
\end{align}

It follows from \eqref{4.1}, \eqref{1.5}, \eqref{4.5} and \eqref{4.23} that
\begin{align}
\sum_{\ell=1}^q\frac{d^2}{dt^2}\Phi(\mathsf f_t^{\mathsf v_{\ell}^\top}\circ u)_{\big |_{t=0}}\le \int_M \sum_{\alpha =1}^n \mathsf C_{\alpha}^2 \mathsf F_{u(x)}(\mathsf e_{\alpha})\, dx \le  -q\kappa\int_M e_{\Phi} (u)\, dx=-4q\kappa\Phi(u).\label{4.24}
\end{align}
We now proceed in steps.\\

{\bf{Step 1}}. There is a number $\xi\geq 4\kappa>0$ such that for $1\leq \ell\leq q $, $|t|\leq 1$ and all $\mathsf X,\mathsf Y\in \Gamma(TN)$,
\begin{align}
\bigg | \frac{d^3}{dt^3} \langle d\mathsf f_t^{\mathsf v_{\ell}^\top}(\mathsf X),d\mathsf f_t^{\mathsf v_{\ell}^\top}(\mathsf Y)\rangle^2\bigg |\leq \xi |\mathsf X|^2|\mathsf Y|^2.\label{4.25}
\end{align}
\begin{proof}
Let $SN$ be the unit sphere bundle of $N$. Then the function defined on the compact set $[-1,1]\times SN\times SN$ by
\begin{align}\label{4.26}
(t,\mathsf x,\mathsf y)\mapsto \max_{1\leq \ell\leq q}\left|\frac{d^3}{dt^3}\langle d\mathsf f_t^{\mathsf v_{\ell}^\top}(\mathsf x),d\mathsf f_t^{\mathsf v_{\ell}^\top}(\mathsf y)\rangle^2\right|
\end{align}
is continuous and thus has a maximum. Let ${\xi}_0$ be this maximum and $\xi=\max\{\frac{3\kappa}{m}, \xi_0\}$. Then \eqref{4.25} follows by homogeneity.
\end{proof}
{\bf{Step 2}}. There is a smooth vector field $\mathsf V$ on $N$ such that if $\xi$ is as in Step 1,
then we have
\begin{align}
\frac{d}{dt}\Phi(\mathsf f_t^{\mathsf V}\circ u)_{\big |_{t=0}}\leq 0,\label{4.27}
\end{align}
\begin{align}
\frac{d^2}{dt^2}\Phi(\mathsf f_t^{\mathsf V}\circ u)_{\big |_{t=0}}\leq - 4\kappa\Phi(u),\label{4.28}
\end{align}
and
\begin{align}
\left|\frac{d^3}{dt^3}\Phi(\mathsf f_t^{\mathsf V}\circ u)\right|\leq m\xi\Phi(u),\quad{\text{for}}\quad |t|\leq 1.\label{4.29}
\end{align}
\begin{proof}
From \eqref{4.24} it is seen that
\begin{align}\label{4.30}
\frac{d^2}{dt^2}\Phi(\mathsf f_t^{\mathsf v_{\ell}^\top}\circ u)_{\big |_{t=0}}\leq - 4\kappa\Phi(u),
\end{align}
for some $1 \le \ell \le q$. Or we would have $\sum_{\ell=1}^q\frac{d^2}{dt^2}\Phi(\mathsf f_t^{\mathsf v_{\ell}^\top}\circ u)_{\big |_{t=0}}  > -4q\kappa\Phi(u)\, ,$ contradicting \eqref{4.24}. If $\frac{d}{dt}\Phi(\mathsf f_t^{\mathsf v_{\ell}^\top}\circ u)_{\big |_{t=0}}\leq 0,$ set $\mathsf V=\mathsf v_{\ell}^\top$; otherwise, set $\mathsf V=-\mathsf v_{\ell}^\top$. Then \eqref{4.27} and \eqref{4.28} hold. From \eqref{4.25}, we have
\begin{equation}\label{4.31}
\begin{aligned}
\left|\frac{d^3}{dt^3}\Phi(\mathsf f_t^{\mathsf V}\circ u)\right| & = \frac 14 \int_M\sum_{i,j=1}^m \frac{d^3}{dt^3} \langle d\mathsf f_t^{\mathsf v_{\ell}^\top}\big (du(e_i)\big ),d\mathsf f_t^{\mathsf v_{\ell}^\top}\big (du(e_j)\big )\rangle^2\, dx\\
& \leq \frac {\xi}{4} \int_M\sum_{i,j=1}^m|du(e_i)|^2|du(e_j)|^2\, dx\\
& =\frac {\xi}{4}\int_M\bigg ( \sum_{i=1}^m\langle du(e_i), du(e_i)\rangle\bigg )^2\, dx\\ & \le \frac {m\xi}{4}\int_M \sum_{i=1}^m\langle du(e_i), du(e_i)\rangle^2\, dx\\
&\le \frac {m\xi}{4}\int_M \sum_{i,j=1}^m\langle du(e_i), du(e_j)\rangle^2\, dx = m\xi\Phi(u).
\end{aligned}
\end{equation}
So \eqref{4.29} is right.
\end{proof}
{\bf{Step 3}}. Let $\zeta=\frac{3\kappa}{m\xi}$ ($\zeta\leq 1$, as $\frac{3\kappa}{m}\leq \xi$), $\rho=1-\frac{\kappa \zeta^2}{2}$, and $\mathsf V$ be as in Step 2. Then $0<\rho<1$ and
\begin{align}\label{4.32}
\Phi(\mathsf f_{\zeta}^{\mathsf V}\circ u)\leq \rho \Phi(u).
\end{align}
\begin{proof}
Let $\Phi(t)=\Phi(\mathsf f_t^\mathsf V\circ u)$. Then by Step 2 for $0\leq t\leq \zeta$, we have
\begin{align}\label{4.33}
&\Phi^{\prime \prime}(t)=\Phi^{\prime \prime}(0)+\int_0^t\Phi^{\prime \prime \prime}(s)ds\leq -4\kappa\Phi(u) + m \xi \zeta\Phi(u)=-\kappa\Phi(u).
\end{align}
Thus
\begin{align}\label{4.33}
\Phi^{\prime}(t)=\Phi^{\prime}(0)+\int_0^t\Phi^{\prime \prime}(s)ds\leq -\kappa t\Phi(u)
\end{align}
and
\begin{align}\label{4.34}
\Phi(\zeta)=\Phi(0)+\int_0^{\zeta}\Phi^{\prime}(s)ds\leq \left(1-\frac{\kappa\zeta^2}{2}\right)\Phi(u)=\rho\Phi(u).
\end{align}
As both $\Phi(\zeta)$ and $\Phi(u)$ are positive this inequality implies $\rho$ is positive. Let $u_1=\mathsf f_{\zeta}^{\mathsf V}\circ u$. Then $u_1$ is homotopic to $u$ and we have just shown $\Phi(u_1)\leq \rho\Phi(u)$.
\end{proof}
From Step 1, Step 2 and Step 3, we know this lemma is right.
\end{proof}

\begin{proof}[Proof of Theorem \ref{T:1.1} $($b$)$]
Let $u:M\rightarrow N$ be any smooth map from $M$ to $N$. By using Lemma \ref{L:4.3}, we can find a map $u_1:M\rightarrow N$ which is homotopic to $f$ with $\Phi(u_1)\leq \rho\Phi(u)$.
Another application of the lemma gives an $u_2$ homotopic to $u_1$ with $\Phi(u_2)\leq \rho\Phi(u_1)\leq \rho^2\Phi(u)$. By induction, there is $u_{\ell}$ $(\ell=1,2,\cdots)$ homotopic to $u$ with $\Phi(u_l)\leq \rho^l\Phi(u)$.
But $0<\rho<1$ whence $\lim_{\ell\rightarrow\infty}\Phi(u_{\ell})=0$ as required.
\end{proof}

\begin{cor}\label{C:4.1}
If $N$ is $\Phi$-$\operatorname{SSU}$. then the infimum of the $\Phi$-energy is zero among maps homotopic to the identity map on $N\, .$
\end{cor}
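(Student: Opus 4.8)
The plan is to deduce this as an immediate consequence of Theorem~\ref{T:1.1}~(b), by specializing the source manifold to be $N$ itself and the map to be the identity. Indeed, the whole point of isolating properties (a)--(d) is that the target statement (b) applies to \emph{any} map from \emph{any} compact manifold, and $\text{Id}_N$ is simply one such map; the only thing to verify is that the infimum is pinned down to zero rather than merely bounded below by it.

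First I would observe that, since $N$ is a compact $\Phi$-$\operatorname{SSU}$ manifold, Theorem~\ref{T:1.1}~(b) is applicable with the compact domain $M = N$. Taking $u = \text{Id}_N : N \to N$, that theorem guarantees that the homotopy class of $\text{Id}_N$ contains maps of arbitrarily small $\Phi$-energy. Concretely, the iterative construction underlying its proof (repeated application of Lemma~\ref{L:4.3}) produces maps $u_{\ell}$ homotopic to $\text{Id}_N$ with $\Phi(u_{\ell}) \le \rho^{\ell}\, \Phi(\text{Id}_N)$, and since $0 < \rho < 1$ we have $\Phi(u_{\ell}) \to 0$ as $\ell \to \infty$.

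Next I would record that the $\Phi$-energy is always non-negative: from \eqref{1.1},
\[
\Phi(v) = E_{\Phi}(v) = \frac 14 \int_N \sum_{i,j=1}^n \langle dv(e_i), dv(e_j)\rangle^2\, dx \ge 0
\]
for every smooth map $v$, being the integral of a sum of squares. Consequently $\inf\{\Phi(v) : v \simeq \text{Id}_N\} \ge 0$. Combining this lower bound with the sequence $\Phi(u_{\ell}) \to 0$ constructed above forces the infimum of the $\Phi$-energy over the homotopy class of $\text{Id}_N$ to equal zero, which is exactly the assertion.

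The argument carries no genuine obstacle. The single conceptual point worth making explicit is that $\text{Id}_N$ is a legitimate map into the compact $\Phi$-$\operatorname{SSU}$ target $N$ from the compact domain $N$, so Theorem~\ref{T:1.1}~(b) applies verbatim; the non-negativity of $E_{\Phi}$ then upgrades ``arbitrarily small $\Phi$-energy'' to ``infimum equal to zero.'' (This also foreshadows the study of $\Phi$-stability of $\text{Id}_N$: the identity, though $\Phi$-harmonic by Corollary~\ref{C:2.3}, cannot be a minimizer of the $\Phi$-energy in its homotopy class once $N$ is $\Phi$-$\operatorname{SSU}$.)
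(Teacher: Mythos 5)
Your proposal is correct and follows exactly the paper's own argument: Corollary~\ref{C:4.1} is obtained by applying Theorem~\ref{T:1.1}~(b) with $M=N$ and $u=\text{Id}_N$. The additional remark that $E_{\Phi}\ge 0$ pins the infimum at zero is a harmless elaboration of what the paper leaves implicit.
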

\begin{proof} This follows at once from Theorem 1.1$($b$)$ by choosing $M=N$ and the smooth map to be the identity map on $N\, .$
\end{proof}

\noindent
{\bf Theorem \ref{T:1.1} (d)}
{\it If $N$ is $\Phi$-$\operatorname{SSU}$, then for every compact manifold $M$, the homotopic class of any map from $N$ into $M$ contains elements of arbitrarily small $\Phi$-energy.}
\smallskip

\begin{lem}\label{L: 4.4}
If $N$ is a manifold such that the infimum of the energy is zero among maps homotopic to the identity and if $M$ is a compact manifold, then
 the infimum of the energy is zero in each homotopy class of maps from $N$ to $M$.
\end{lem}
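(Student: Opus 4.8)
The plan is to deform an arbitrary map $f\colon N\to M$ within its homotopy class by precomposing it with self-maps of $N$ that are homotopic to the identity and carry very small $\Phi$-energy; such maps exist by the hypothesis that the infimum of the $\Phi$-energy among maps $N\to N$ homotopic to $\mathrm{id}_N$ is zero. Concretely, I would fix a smooth representative $f$ of a given homotopy class of maps $N\to M$ and let $\varepsilon>0$. By assumption there is a smooth $g\colon N\to N$ homotopic to $\mathrm{id}_N$ with $E_\Phi(g)$ as small as we please, and composing a homotopy $\mathrm{id}_N\simeq g$ with $f$ yields a homotopy $f\simeq f\circ g$, so $f\circ g\colon N\to M$ lies in the prescribed class. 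It therefore suffices to control $E_\Phi(f\circ g)$ in terms of $E_\Phi(g)$.

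The key step is a uniform pointwise comparison $e_\Phi(f\circ g)\le n\Lambda^4\,e_\Phi(g)$, where $n=\dim N$ and $\Lambda:=\sup_N\|df\|$ is finite because $f$ is smooth and $N$ is compact. For a local orthonormal frame $\{e_i\}_{i=1}^n$ on $N$ the chain rule gives $d(f\circ g)(e_i)=df\big(dg(e_i)\big)$, so by Cauchy--Schwarz in $TM$ and the definition of $\Lambda$,
\[
\langle d(f\circ g)(e_i),d(f\circ g)(e_j)\rangle^2\le |df(dg(e_i))|^2\,|df(dg(e_j))|^2\le \Lambda^4\,|dg(e_i)|^2\,|dg(e_j)|^2 .
\]
Summing over $i,j$ and invoking the elementary bound $\big(\sum_{i=1}^n|dg(e_i)|^2\big)^2\le n\sum_{i,j=1}^n\langle dg(e_i),dg(e_j)\rangle^2=4n\,e_\Phi(g)$ (the last equality being \eqref{1.1}) gives $4\,e_\Phi(f\circ g)\le 4n\Lambda^4\,e_\Phi(g)$; integrating over $N$ yields $E_\Phi(f\circ g)\le n\Lambda^4\,E_\Phi(g)$. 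If $\Lambda=0$ then $f$ is constant and $E_\Phi(f)=0$ already; otherwise, choosing $g$ with $E_\Phi(g)<\varepsilon/(n\Lambda^4)$ produces a map $f\circ g$ homotopic to $f$ with $E_\Phi(f\circ g)<\varepsilon$. Since $\varepsilon>0$ is arbitrary, the infimum of $E_\Phi$ in the homotopy class of $f$ is zero, which is the assertion.

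The only substantive point, and hence the main obstacle, is establishing the uniform pointwise comparison between the two $\Phi$-energy densities. This rests on two facts: that $\Lambda=\sup_N\|df\|$ is finite, which is where compactness of $N$ (and smoothness of $f$) enters; and that the ordinary energy density $\sum_i|dg(e_i)|^2$ produced by the chain rule is dominated by the $\Phi$-energy density $e_\Phi(g)$ through Cauchy--Schwarz, with a constant depending only on $\dim N$. Once this estimate is secured, the remainder is routine homotopy bookkeeping together with the hypothesis on maps homotopic to the identity.
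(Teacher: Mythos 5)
Your proposal is correct and follows essentially the same route as the paper's own proof: precompose the given map $f$ with self-maps $g$ of the domain that are homotopic to the identity and have arbitrarily small $\Phi$-energy, then bound $E_{\Phi}(f\circ g)$ by $E_{\Phi}(g)$ times a constant coming from $\sup\|df\|$ on the compact domain. The only difference is presentational: your pointwise Cauchy--Schwarz estimate $e_{\Phi}(f\circ g)\le n\Lambda^4\, e_{\Phi}(g)$ replaces (and in fact cleans up) the paper's matrix-trace identity $e_{\Phi}(u\circ\psi)=\frac14\operatorname{trace}\big((U\cdot U^{T})\cdot(\Psi\cdot\Psi^{T})\big)$ together with its entry-by-entry limiting argument.
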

\begin{proof}[Proof of Lemma \ref{L: 4.4}]
Let $K, M, N$ be three compact Riemannian manifolds of dimensions $k, m$ and $n$ respectively, and $K \xrightarrow {\psi} M \xrightarrow {u} N$ be smooth maps.
Denote an $m$$\times$$m$-matrix $U$ with the $i$-$j$ entry $U_{ij}$ given by $U_{ij} = \langle du(e_i), du(e_j)\rangle\, ,$ and an $k$$\times$$k$-matrix $\Psi$ with the $i$-$j$ entry $\Psi_{ij}$ given by $\Psi_{ij} = \langle(d\psi(e_i), d\psi(e_j)\rangle_M\, .$ Then the $\Phi$-energy density $e_{\Phi}(u)$ of $u$ satisfies $e_{\Phi}(u) =\frac 14 \sum_{i,j=1}^m\langle du(e_i),du(e_j)\rangle^2\, = \frac 14 \text {trace} (U \cdot U^T)\, .$ Similarly, $e_{\Phi}(\psi) =\frac 14 \text {trace} (\Psi \cdot \Psi^T)\, ,$ and
$$ \begin{aligned}e_{\Phi}(u \circ \psi ) &=\frac 14 \text {trace} \big ( (U \cdot \Psi) \cdot (U \cdot \Psi)^T \big )\\
&=\frac 14 \text {trace} \big ( U \cdot ( \Psi \cdot \Psi ^T) \cdot U^T \big )\\
&=\frac 14 \text {trace} \big ( (U \cdot U^T) \cdot ( \Psi \cdot \Psi ^T)  \big ).
\end{aligned}$$
Thus, if $u: M \to N$ be any smooth map, its composition with $\psi ^{\ell} : M \to M$ homotopic to the identity map on $M$ and $E_{\Phi} (\psi^{\ell}) \to 0\, ,$ as $\ell \to \infty$, then  $e_\Phi (\psi^{\ell}) \to 0\, ,$ as $\ell \to \infty$, since $E_{\Phi} (\psi^{\ell}) = \int _M e_\Phi (\psi^{\ell})\, dx\, .$  Hence, each nonnegative entry $(\Psi ^{\ell} _{ij}) ^2\to 0\, ,$ where $\Psi^{\ell}_{ij} = \langle d\psi^{\ell}(e_i), d\psi^{\ell}(e_j)\rangle_M \, .$ For $\sum _{i,j=1}^m  (\Psi ^{\ell} _{ij} )^2 \to 0\, .$ Or  $\sum _{i,j=1}^m  (\Psi ^{\ell} _{ij} )^2 \not \to 0\, ,$ contradicting the hypothesis $E_{\Phi} (\psi^{\ell}) \to 0\, ,$ as $\ell \to \infty\, .$
It follows that each entry  $\Psi ^{\ell} _{ij} \to 0\, .$ Furthermore, the sequence $u \circ \psi ^{\ell}$ is homotopic to $u$ with
\[ \begin{aligned}\Phi(u \circ \psi ^{\ell})&  = \frac 14 \int _M \text {trace} \big ( (U \cdot U^T) \cdot ( \Psi ^{\ell} \cdot (\Psi ^{\ell} )^T)  \big )\, dv \\
& = \frac 14 \int _M \sum_{s,i=1}^m \big ( (U \cdot U^T)_{si}  ( \Psi ^{\ell} \cdot (\Psi ^{\ell} )^T)_{is}  \big )\, dv \\
& = \frac 14 \int _M \sum_{s,i=1}^m \bigg ( \big ( \sum _{t=1}^n U_{st}  U_{it}) ( \sum _{j=1}^m \Psi ^{\ell} _{ij} \Psi ^{\ell} _{sj}  \big ) \bigg )\, dv \\
& \to 0\quad \text{as}\quad  \ell \to \infty\, ,
\end{aligned}\]
$\text{since}\quad \text{each}\quad \Psi ^{\ell} _{ij} \to 0\quad \text{and}\quad  \text{each}\quad U_{st}\quad {is}\quad {bounded}\quad {on}\quad M.$\end{proof}

\begin{proof}[Proof of Theorem 1.1 $(d)$] This follows at once from Corollary \ref{C:4.1} and Lemma \ref{L: 4.4}.
\end{proof}

\section{Examples of $\Phi$-SSU manifolds }
\begin{thm}\label{T:5.1}
Let $N$ be a hypersurface in Euclidean space. Then $N$ is $\Phi$-SSU  if and only if its principal curvatures satisfy $$0<\lambda_1\leq \lambda_2\leq \cdots\leq \lambda_n < \frac{1}{3}(\lambda_1+\cdots+\lambda_{n-1})$$.
\end{thm}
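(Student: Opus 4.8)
The plan is to evaluate the functional $\mathsf F_y$ of Definition \ref{D:1.3} directly for the given hypersurface immersion $N\hookrightarrow\mathbb R^{n+1}$, reduce the sign condition $\mathsf F_y(\mathsf x)<0$ to an elementary system of inequalities on the principal curvatures, and then match that system to the stated bound. First I would fix a point $y\in N$, pick a local unit normal $\nu$, and diagonalize the self-adjoint Weingarten map $\mathsf A=\mathsf A^{\nu}$, choosing an orthonormal frame $\{\mathsf e_1,\dots,\mathsf e_n\}$ of principal directions with $\mathsf A\mathsf e_\alpha=\lambda_\alpha\mathsf e_\alpha$. Since the normal bundle has rank one, relation \eqref{3.3} gives $\mathsf B(\mathsf e_\alpha,\mathsf e_\beta)=\lambda_\alpha\delta_{\alpha\beta}\nu$, so for a unit vector $\mathsf x=\sum_\alpha x_\alpha\mathsf e_\alpha$ (with $\sum_\alpha x_\alpha^2=1$) one computes $\mathsf B(\mathsf x,\mathsf e_\beta)=x_\beta\lambda_\beta\nu$ and $\mathsf B(\mathsf x,\mathsf x)=\big(\sum_\alpha x_\alpha^2\lambda_\alpha\big)\nu$.

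Substituting into \eqref{1.5} and writing $S=\sum_\beta\lambda_\beta$ collapses the formula to
\[
\mathsf F_y(\mathsf x)=\sum_{\alpha=1}^n x_\alpha^2\,\lambda_\alpha(4\lambda_\alpha-S).
\]
The key observation is that the right-hand side is a convex combination of the numbers $g_\alpha:=\lambda_\alpha(4\lambda_\alpha-S)$, weighted by the $x_\alpha^2$; hence $\sup_{|\mathsf x|=1}\mathsf F_y(\mathsf x)=\max_\alpha g_\alpha$, attained at $\mathsf x=\mathsf e_\alpha$. Therefore $\mathsf F_y$ is negative-valued on all unit tangent vectors if and only if $g_\alpha=\lambda_\alpha(4\lambda_\alpha-S)<0$ for every $\alpha$. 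This yields one implication at once: if $0<\lambda_1\le\cdots\le\lambda_n<\frac13(\lambda_1+\cdots+\lambda_{n-1})$, then each $\lambda_\alpha>0$ and $4\lambda_\alpha\le 4\lambda_n<\lambda_1+\cdots+\lambda_n=S$, so every $g_\alpha<0$ and $N$ is $\Phi$-$\operatorname{SSU}$.

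For the converse I would show that $g_\alpha<0$ for all $\alpha$ forces both the sign and the size conditions. The step requiring care is positivity, since \emph{a priori} $\lambda_\alpha(4\lambda_\alpha-S)<0$ could hold with some $\lambda_\alpha<0$. Here I would use that $\mathsf F_y$ depends on $\mathsf B$ quadratically and so is unchanged under the orientation reversal $\nu\mapsto-\nu$, i.e. under $(\lambda_\alpha,S)\mapsto(-\lambda_\alpha,-S)$; thus we may normalize the normal so that $S\ge 0$. If $S=0$ then $g_\alpha=4\lambda_\alpha^2\ge 0$, contradicting $g_\alpha<0$, so $S>0$; and then $\lambda_\alpha<0$ would give $4\lambda_\alpha-S<0$ and hence $g_\alpha>0$, again a contradiction, so every $\lambda_\alpha>0$. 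Relabelling in increasing order, the inequalities $4\lambda_\alpha<S$ are all implied by the single one for the largest curvature, $4\lambda_n<S$, which rearranges to $\lambda_n<\frac13(\lambda_1+\cdots+\lambda_{n-1})$, exactly the asserted bound.

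The only genuine obstacle is the orientation/positivity bookkeeping just described, needed to rule out spurious solutions with negative curvatures and with $S\le 0$; the remainder of the argument is the direct substitution into \eqref{1.5} together with the convexity remark that lets one test $\mathsf F_y$ only on the principal directions.
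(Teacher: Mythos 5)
Your proposal is correct and is essentially the paper's own proof: the paper disposes of this theorem in one line by invoking ``the same method in proving Theorem 3.3 in [WY] or simplifying \eqref{1.5}'', and your argument is precisely that simplification (diagonalizing the Weingarten map, reducing $\mathsf F_y$ to the convex combination $\sum_\alpha x_\alpha^2\,\lambda_\alpha(4\lambda_\alpha-S)$, and testing on principal directions), carried out in full detail. Your careful handling of the orientation and sign of $S$ in the converse is a welcome addition that the paper leaves implicit.
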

\begin{proof}
This follows by using the same method in proving theorem 3.3 in \cite{WY} or simplifying  \e{1.5}.
\end{proof}
\begin{cor}\label{C:5.1}
The graph of $f(x)=x_1^2+\cdots+x_n^2\, ,  x = (x_1, \dots , x_n) \in \mathbb R^n$ is $\Phi$-SSU if and only if $n>4$.
\end{cor}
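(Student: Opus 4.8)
The plan is to realize the graph as a rotationally symmetric hypersurface (an elliptic paraboloid) in $\mathbb R^{n+1}$ and to feed its principal curvatures into the criterion of Theorem \ref{T:5.1}. First I would write the hypersurface as $y=\phi(r)$ with $\phi(r)=r^2$ and $r=|x|$, and compute its principal curvatures at a point lying at distance $r$ from the axis of revolution. By the $SO(n)$ rotational symmetry these split into a single radial (meridian) curvature together with $n-1$ equal angular (parallel) curvatures, given by the standard surface-of-revolution formulas
\[
\kappa_{\mathrm{rad}}=\frac{\phi''}{(1+\phi'^2)^{3/2}}=\frac{2}{(1+4r^2)^{3/2}},
\qquad
\kappa_{\mathrm{ang}}=\frac{\phi'}{r(1+\phi'^2)^{1/2}}=\frac{2}{(1+4r^2)^{1/2}}.
\]
All of these are strictly positive, so the requirement $0<\lambda_1$ in Theorem \ref{T:5.1} holds automatically at every point of the graph.

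Next I would order the curvatures and invoke the second half of the criterion. Since $(1+4r^2)^{3/2}\ge(1+4r^2)^{1/2}$, we have $\kappa_{\mathrm{rad}}\le\kappa_{\mathrm{ang}}$, so that $\lambda_1=\kappa_{\mathrm{rad}}$ and $\lambda_2=\cdots=\lambda_n=\kappa_{\mathrm{ang}}$, with the largest eigenvalue being $\lambda_n=\kappa_{\mathrm{ang}}$. Then the inequality $\lambda_n<\tfrac13(\lambda_1+\cdots+\lambda_{n-1})$ of Theorem \ref{T:5.1} becomes $\kappa_{\mathrm{ang}}<\tfrac13\big(\kappa_{\mathrm{rad}}+(n-2)\kappa_{\mathrm{ang}}\big)$, which after clearing the common factor $2$ and multiplying through by $(1+4r^2)^{3/2}$ collapses to the single scalar condition
\[
(5-n)(1+4r^2)<1\qquad\text{for all }r\ge 0.
\]

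Finally I would analyze this inequality. The binding point is the vertex $r=0$, where it reads $5-n<1$, i.e. $n>4$. Indeed, if $n\ge 5$ then $5-n\le 0$, so the left-hand side is $\le 0<1$ for every $r\ge 0$ and the paraboloid is $\Phi$-SSU; whereas if $n\le 4$ the value at $r=0$ is $5-n\ge 1$, violating the strict inequality there, so the graph fails to be $\Phi$-SSU. This gives the claimed equivalence. The only point requiring care — and the one I would carry out most carefully — is computing the two principal curvatures and, above all, pinning down their ordering, since Theorem \ref{T:5.1} singles out the largest curvature $\lambda_n$ on one side; once that is fixed, the remaining reduction to $(5-n)(1+4r^2)<1$ and its evaluation at the vertex is elementary.
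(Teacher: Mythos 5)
Your proposal is correct and is essentially the paper's own (implicit) argument: the corollary is meant to follow from Theorem \ref{T:5.1} by computing the two distinct principal curvatures of the rotational paraboloid, exactly as you do, and your reduction to $(5-n)(1+4r^2)<1$ with the binding case at the vertex $r=0$ is accurate in both directions.
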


\begin{cor}\label{C:5.2}
The standard sphere $S^n$ is $\Phi$-SSU if and only if $n>4$.
\end{cor}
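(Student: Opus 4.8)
The plan is to realize the standard sphere $S^n$ as a hypersurface in $\mathbb R^{n+1}$ and read off the conclusion directly from the principal-curvature criterion of Theorem \ref{T:5.1}. First I would recall that the unit sphere $S^n \subset \mathbb R^{n+1}$ is totally umbilic, so all of its principal curvatures coincide: $\lambda_1 = \lambda_2 = \cdots = \lambda_n = \lambda$ for a single constant $\lambda > 0$ (one has $\lambda = 1$ for the unit sphere, and $\lambda = 1/r$ for the sphere of radius $r$; the precise value will turn out to be irrelevant).

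Next I would substitute these equal principal curvatures into the inequality of Theorem \ref{T:5.1}. The hypothesis $0 < \lambda_1 \le \cdots \le \lambda_n$ holds automatically since $\lambda > 0$, so the only condition left to verify is
\[
\lambda_n < \tfrac{1}{3}\big(\lambda_1 + \cdots + \lambda_{n-1}\big).
\]
With every $\lambda_i = \lambda$, the right-hand side equals $\tfrac{(n-1)\lambda}{3}$, so the inequality reduces to $\lambda < \tfrac{(n-1)\lambda}{3}$. Dividing through by $\lambda > 0$ yields $3 < n-1$, that is, $n > 4$.

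Therefore $S^n$ satisfies the criterion of Theorem \ref{T:5.1} precisely when $n > 4$, which is exactly the asserted equivalence. I expect there to be essentially no obstacle here: all of the work is carried by Theorem \ref{T:5.1}, and the corollary amounts to evaluating its curvature inequality at the umbilic situation where all principal curvatures agree. The one point worth noting is the scale-invariance of the criterion---both sides of the defining inequality are homogeneous of degree one in the $\lambda_i$, so the conclusion does not depend on the radius of the sphere, confirming the statement for the standard $S^n$. (One may also cross-check against Corollary \ref{C:5.1}, since the round sphere and the elliptic paraboloid $y = x_1^2 + \cdots + x_n^2$ produce the identical threshold $n > 4$.)
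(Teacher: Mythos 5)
Your proof is correct and follows exactly the route the paper intends: Corollary \ref{C:5.2} is stated as an immediate consequence of the hypersurface criterion in Theorem \ref{T:5.1}, obtained by specializing to the totally umbilic case where all principal curvatures of $S^n$ are equal, so the inequality $\lambda_n < \tfrac{1}{3}(\lambda_1+\cdots+\lambda_{n-1})$ reduces to $n>4$. Your added remarks on scale-invariance and the cross-check with Corollary \ref{C:5.1} are sound but not needed.
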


\begin{cor}\label{C:5.3}  Let $N$ be a hypersurface in Euclidean space. Then

\noindent
$(i)$ If $N$ is $\Phi$-SSU, then $N$  is $p$-SSU for $2 \le p \le 4$.\\
$(ii)$ If $N$ is $p$-SSU for $p \ge 4$, then $N$  is $\Phi$-SSU.\\
$(iii)$ $N$ is $\Phi$-SSU if and only if $N$  is $4$-SSU.\\
\end{cor}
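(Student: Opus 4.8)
The plan is to reduce both conditions to a single scalar inequality at the principal directions and then compare them through a one-line algebraic identity. First I would fix a point $y \in N$ and, since $N$ is a hypersurface, write $\mathsf B(\mathsf X, \mathsf Y) = h(\mathsf X, \mathsf Y)\nu$ for the unit normal $\nu$ and the scalar second fundamental form $h$. Choosing $\{\mathsf e_1, \dots, \mathsf e_n\}$ to be principal directions with principal curvatures $\lambda_1 \le \cdots \le \lambda_n$, so that $h(\mathsf e_\alpha, \mathsf e_\beta) = \lambda_\alpha \delta_{\alpha\beta}$, I would write a unit vector as $\mathsf x = \sum_i x_i \mathsf e_i$ and set $t_i = x_i^2$, $S_1 = \sum_i \lambda_i t_i$, $S_2 = \sum_i \lambda_i^2 t_i$, and $H = \sum_{\beta=1}^n \lambda_\beta$. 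A direct substitution into \eqref{1.5} and \eqref{1.4}, using $\mathsf B(\mathsf x,\mathsf e_\beta)=x_\beta\lambda_\beta\nu$ and $\mathsf B(\mathsf x,\mathsf x)=S_1\nu$, then gives
\[
\mathsf F_y(\mathsf x) = 4 S_2 - S_1 H, \qquad \mathsf F_{p,y}(\mathsf x) = (p-2) S_1^2 + 2 S_2 - S_1 H,
\]
both regarded as functions of $t = (t_1, \dots, t_n)$ ranging over the standard simplex $\Delta = \{t_i \ge 0,\ \sum_i t_i = 1\}$, since each functional depends on $\mathsf x$ only through the squares $x_i^2$.

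The key step is to observe that each functional attains its maximum on $\Delta$ at a vertex $t = \mathsf e_k$, that is, at a principal direction. Indeed $S_1$ and $S_2$ are affine in $t$, so $\mathsf F_y$ is affine, while $\mathsf F_{p,y} = (p-2)S_1^2 + 2S_2 - S_1 H$ is convex for $p \ge 2$, being a nonnegative multiple of the square of an affine function plus an affine term; and a convex function on a compact convex polytope attains its maximum at an extreme point. Evaluating at $t = \mathsf e_k$, where $S_1 = \lambda_k$ and $S_2 = \lambda_k^2$, yields $\mathsf F_y(\mathsf e_k) = \lambda_k(4\lambda_k - H)$ and $\mathsf F_{p,y}(\mathsf e_k) = \lambda_k(p\lambda_k - H)$. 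Hence $N$ is $\Phi$-SSU if and only if $\lambda_k(4\lambda_k - H) < 0$ for every $k$, and $N$ is $p$-SSU if and only if $\lambda_k(p\lambda_k - H) < 0$ for every $k$. For $0 < \lambda_1 \le \cdots$ these are exactly $\lambda_n < \tfrac13(\lambda_1 + \cdots + \lambda_{n-1})$ and $\lambda_n < \tfrac{1}{p-1}(\lambda_1 + \cdots + \lambda_{n-1})$, so this recovers Theorem \ref{T:5.1} as the case $p=4$.

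With both characterizations in hand, the three assertions follow from the identity
\[
\lambda_k(p\lambda_k - H) = \lambda_k(4\lambda_k - H) + (p-4)\lambda_k^2 .
\]
For $(i)$, if $N$ is $\Phi$-SSU then $\lambda_k(4\lambda_k - H) < 0$, and for $2 \le p \le 4$ the extra term $(p-4)\lambda_k^2 \le 0$, so $\lambda_k(p\lambda_k - H) < 0$ for all $k$, i.e.\ $N$ is $p$-SSU. For $(ii)$, if $N$ is $p$-SSU with $p \ge 4$ then $\lambda_k(p\lambda_k - H) < 0$, and now $(4-p)\lambda_k^2 \le 0$, so $\lambda_k(4\lambda_k - H) = \lambda_k(p\lambda_k - H) + (4-p)\lambda_k^2 < 0$, i.e.\ $N$ is $\Phi$-SSU. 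Part $(iii)$ is the special case $p=4$, where the two vertex inequalities literally coincide, giving the equivalence.

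The only genuine subtlety, and the step I expect to require the most care, is the reduction to principal directions by convexity: it hinges on $p \ge 2$, which makes the coefficient $(p-2)$ of $S_1^2$ nonnegative and hence $\mathsf F_{p,y}$ convex in $t$. For $p < 2$ the functional would be concave in the $S_1$ variable and its maximum over $\Delta$ could occur at an interior point, so the clean vertex characterization, and with it the comparison above, would break down. This is precisely why the hypotheses are restricted to $2 \le p \le 4$ and $p \ge 4$.
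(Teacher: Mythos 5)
Your proposal is correct, and at its core it follows the same route as the paper: reduce both notions, for a Euclidean hypersurface, to scalar inequalities on the principal curvatures and then compare their dependence on $p$. The difference lies in what is proved rather than cited. The paper's proof of Corollary \ref{C:5.3} is a one-line comparison of two quoted characterizations: Theorem \ref{T:5.1} ($N$ is $\Phi$-SSU iff $0<\lambda_1\le\cdots\le\lambda_n<\frac{1}{3}(\lambda_1+\cdots+\lambda_{n-1})$) and Theorem 3.3 of \cite{WY} ($N$ is $p$-SSU iff $\lambda_n<\frac{1}{p-1}(\lambda_1+\cdots+\lambda_{n-1})$), using that $\frac{1}{p-1}$ is decreasing in $p$ and equals $\frac{1}{3}$ exactly at $p=4$; note that Theorem \ref{T:5.1} is itself justified in the paper only by reference to the method of \cite{WY}. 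You instead derive both characterizations from scratch: writing $\mathsf B = h\,\nu$, passing to the variables $t_i=x_i^2$ on the simplex, and observing that $\mathsf F_y=4S_2-S_1H$ is affine while $\mathsf F_{p,y}=(p-2)S_1^2+2S_2-S_1H$ is convex in $t$ for $p\ge 2$, so each is maximized at a vertex, i.e., at a principal direction. This buys two things: the corollary becomes self-contained (you in effect supply the missing details of Theorem \ref{T:5.1} and of the hypersurface case of the cited result), and the comparison can be run directly on the vertex values via the identity $\lambda_k(p\lambda_k-H)=\lambda_k(4\lambda_k-H)+(p-4)\lambda_k^2$, which sidesteps the sign and ordering normalization of the principal curvatures (the ordered form with $0<\lambda_1$ presumes a choice of unit normal, which your vertex criterion never needs). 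What the paper's version buys is brevity. Two small remarks: for part (iii) you are right to note that $\mathsf F_y$ and $\mathsf F_{4,y}$ agree only at the vertices and not as functions on the simplex, and that this suffices because both criteria are vertex criteria; and your closing sentence slightly misattributes the role of convexity --- convexity is what requires $p\ge 2$ (validity of the vertex reduction), whereas the split between (i) and (ii) at $p=4$ comes purely from the sign of $(p-4)\lambda_k^2$, a distinction your argument itself handles correctly.
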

\begin{proof} $(i)$ and $(ii)$ follow from Theorem \ref{T:5.1}, and Theorem 3.3 in \cite {WY}, which states that a Euclidean hypersurface of dimension $n$ is $p$-SSU if and only if its principal curvatures satisfy $0<\lambda_1\leq \lambda_2\leq \cdots\leq \lambda_n < \frac{1}{p-1}(\lambda_1+\cdots+\lambda_{n-1})$. Combining $(i)$ and $(ii)$, we prove $(iii)$. 
\end{proof}
\begin{thm}
Let $\widetilde{N}$ be a compact convex hypersurface of $\mathbb R^q$ and $N$ be a compact connected minimal $k$-submanifold of $\widetilde{N}$. Assuming that the principal curvatures $\lambda_i$ of $\widetilde{N}$ satisfy $0<\lambda_1\leq\cdots\leq\lambda_{q-1}$. If $\<\text{Ric}^N(\mathsf x),\mathsf x\> > \frac{3}{4}k\lambda^2_{q-1}$ for any unit tangent vector $\mathsf x$ to $N$, then $N$ is $\Phi$-SSU.
\end{thm}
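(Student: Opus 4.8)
The required isometric immersion of $N$ into $\mathbb{R}^q$ is simply the composite $N\hookrightarrow\widetilde{N}\hookrightarrow\mathbb{R}^q$; let $\mathsf{B}$ denote its second fundamental form. By Definition \ref{D:1.3} and \eqref{1.5} (with the dimension $n$ there replaced by $k=\dim N$ and $\{\mathsf{e}_\beta\}_{\beta=1}^k$ a local orthonormal frame on $N$), it suffices to show $\mathsf{F}_y(\mathsf{x})<0$ for every unit tangent vector $\mathsf{x}$ at every $y\in N$. The plan is to reduce $\mathsf{F}_y(\mathsf{x})$, via three successive Gauss equations together with the minimality of $N$ in $\widetilde{N}$, to a single quantity that the Ricci hypothesis controls. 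Let $\xi$ be the unit normal of $\widetilde{N}$ in $\mathbb{R}^q$ and $\widetilde{\mathsf{A}}$ its shape operator (whose eigenvalues are the principal curvatures $\lambda_i$). Decomposing the normal bundle of $N$ in $\mathbb{R}^q$, I would write, for tangent vectors to $N$,
\[
\mathsf{B}(\mathsf{X},\mathsf{Y})=B^N(\mathsf{X},\mathsf{Y})+\langle \widetilde{\mathsf{A}}\,\mathsf{X},\mathsf{Y}\rangle\,\xi,
\]
where $B^N$ is the second fundamental form of $N$ inside $\widetilde{N}$.

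First I would record the Gauss equation for $N\subset\mathbb{R}^q$, namely $\operatorname{Ric}^N(\mathsf{x},\mathsf{x})=\sum_{\beta}\langle \mathsf{B}(\mathsf{x},\mathsf{x}),\mathsf{B}(\mathsf{e}_\beta,\mathsf{e}_\beta)\rangle-\sum_{\beta}|\mathsf{B}(\mathsf{x},\mathsf{e}_\beta)|^2$. Comparing this with the definition of $\mathsf{F}_y$ gives at once
\[
\mathsf{F}_y(\mathsf{x})=3\sum_{\beta=1}^k|\mathsf{B}(\mathsf{x},\mathsf{e}_\beta)|^2-\operatorname{Ric}^N(\mathsf{x},\mathsf{x}).
\]
Next I would split $\sum_\beta|\mathsf{B}(\mathsf{x},\mathsf{e}_\beta)|^2=\sum_\beta|B^N(\mathsf{x},\mathsf{e}_\beta)|^2+\sum_\beta\langle \widetilde{\mathsf{A}}\,\mathsf{x},\mathsf{e}_\beta\rangle^2$ and use the Gauss equation for $N\subset\widetilde{N}$: since $N$ is minimal, $\sum_\beta B^N(\mathsf{e}_\beta,\mathsf{e}_\beta)=0$, so the cross term dies and $\sum_\beta|B^N(\mathsf{x},\mathsf{e}_\beta)|^2=\widetilde{K}-\operatorname{Ric}^N(\mathsf{x},\mathsf{x})$, where $\widetilde{K}=\sum_\beta\langle R^{\widetilde N}(\mathsf{x},\mathsf{e}_\beta)\mathsf{e}_\beta,\mathsf{x}\rangle$. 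Finally the Gauss equation for the hypersurface $\widetilde{N}\subset\mathbb{R}^q$ gives $\widetilde{K}=\langle \widetilde{\mathsf{A}}\,\mathsf{x},\mathsf{x}\rangle\operatorname{tr}(\widetilde{\mathsf{A}}|_{T_yN})-\sum_\beta\langle \widetilde{\mathsf{A}}\,\mathsf{x},\mathsf{e}_\beta\rangle^2$. The $\sum_\beta\langle \widetilde{\mathsf{A}}\,\mathsf{x},\mathsf{e}_\beta\rangle^2$ terms then cancel, leaving $\sum_\beta|\mathsf{B}(\mathsf{x},\mathsf{e}_\beta)|^2=\langle \widetilde{\mathsf{A}}\,\mathsf{x},\mathsf{x}\rangle\operatorname{tr}(\widetilde{\mathsf{A}}|_{T_yN})-\operatorname{Ric}^N(\mathsf{x},\mathsf{x})$, and hence the clean identity
\[
\mathsf{F}_y(\mathsf{x})=3\,\langle \widetilde{\mathsf{A}}\,\mathsf{x},\mathsf{x}\rangle\,\operatorname{tr}\!\big(\widetilde{\mathsf{A}}|_{T_yN}\big)-4\,\operatorname{Ric}^N(\mathsf{x},\mathsf{x}).
\]

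To finish, convexity makes $\widetilde{\mathsf{A}}$ positive definite with all eigenvalues in $[\lambda_1,\lambda_{q-1}]$, so for the unit vector $\mathsf{x}$ and the orthonormal $\mathsf{e}_\beta$ tangent to $N\subset\widetilde{N}$ the Rayleigh quotients obey $0<\langle \widetilde{\mathsf{A}}\,\mathsf{x},\mathsf{x}\rangle\le\lambda_{q-1}$ and $0<\operatorname{tr}(\widetilde{\mathsf{A}}|_{T_yN})=\sum_\beta\langle \widetilde{\mathsf{A}}\,\mathsf{e}_\beta,\mathsf{e}_\beta\rangle\le k\lambda_{q-1}$, whence $\langle \widetilde{\mathsf{A}}\,\mathsf{x},\mathsf{x}\rangle\operatorname{tr}(\widetilde{\mathsf{A}}|_{T_yN})\le k\lambda_{q-1}^2$. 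Therefore $\mathsf{F}_y(\mathsf{x})\le 3k\lambda_{q-1}^2-4\operatorname{Ric}^N(\mathsf{x},\mathsf{x})$, which is strictly negative exactly when $\operatorname{Ric}^N(\mathsf{x},\mathsf{x})>\tfrac34 k\lambda_{q-1}^2$ — precisely the hypothesis. Thus $\mathsf{F}_y(\mathsf{x})<0$ for all unit $\mathsf{x}$, and $N$ is $\Phi$-SSU.

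I do not expect a genuine obstacle here: the substance is the careful three-layer Gauss-equation bookkeeping, and the decisive simplification is that minimality annihilates the term $\langle B^N(\mathsf{x},\mathsf{x}),\sum_\beta B^N(\mathsf{e}_\beta,\mathsf{e}_\beta)\rangle$ while the $\langle\widetilde{\mathsf{A}}\,\mathsf{x},\mathsf{e}_\beta\rangle^2$ contributions cancel, so that the unbounded intrinsic data $|B^N|^2$ drops out entirely and only the principal-curvature data of $\widetilde N$ remain. This mirrors the $p$-SSU computation of Theorem~3.3 in \cite{WY}, and the threshold $\tfrac34=1-\tfrac14$ is consistent with $p=4$, in agreement with the equivalence $\Phi$-SSU $\Leftrightarrow$ $4$-SSU of Corollary \ref{C:5.3}; in the unit-sphere case $\lambda_{q-1}=1$ it recovers the condition $\operatorname{Ric}^N>\tfrac34 k$.
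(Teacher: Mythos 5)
Your proposal is correct and follows essentially the same route as the paper's proof: the composite immersion $N\subset\widetilde N\subset\mathbb R^q$, the splitting $\mathsf B(\mathsf X,\mathsf Y)=\mathsf B_1(\mathsf X,\mathsf Y)+\widetilde{\mathsf B}(\mathsf X,\mathsf Y)\nu$, minimality of $N$ in $\widetilde N$ to eliminate the $\mathsf B_1$-trace, and the convexity bounds $\widetilde{\mathsf B}(\mathsf x,\mathsf x)\le\lambda_{q-1}$ and $\operatorname{tr}\bigl(\widetilde{\mathsf A}|_{T_yN}\bigr)\le k\lambda_{q-1}$. The only difference is bookkeeping: the paper applies the Gauss equation of $N\subset\mathbb R^q$ once, trading $4\sum_i|\mathsf B(\mathsf x,\mathsf e_i)|^2$ for the Ricci term so that minimality acts directly on $\sum_i\mathsf B(\mathsf e_i,\mathsf e_i)$, whereas you trade the trace term and then invoke the Gauss equations of $N\subset\widetilde N$ and $\widetilde N\subset\mathbb R^q$; both computations telescope to the identical identity $\mathsf F_y(\mathsf x)=3\,\widetilde{\mathsf B}(\mathsf x,\mathsf x)\operatorname{tr}\bigl(\widetilde{\mathsf A}|_{T_yN}\bigr)-4\langle\operatorname{Ric}^N(\mathsf x),\mathsf x\rangle$ and the same final estimate.
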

\begin{proof}
By assumption, $N$ is a submanifold of $\mathbb R^q$. From Gauss equation, we have
\begin{align}\label{5.2}
\mathsf B(\mathsf X,\mathsf Y)=\mathsf B_1(\mathsf X,\mathsf Y)+\widetilde{\mathsf B}(\mathsf X,\mathsf Y)\nu,
\end{align}
where $\mathsf B, \mathsf B_1,$ and $\widetilde{\mathsf B}$ denote the second fundamental form of $N$ in $\mathbb R^{q}$, $N$ in $\widetilde{N}$, and $\widetilde{N}$ in $\mathbb R^{q}$ respectively, and $\nu$ denotes the unit normal field of $\widetilde{N}$ in $\mathbb R^{q}$. Since $N$ is a minimal submanifold of $\widetilde{N}$, we have 
\begin{align}\label{5.3}
\sum_{i=1}^k\mathsf B(\mathsf e_i,\mathsf e_i)=\sum_{i=1}^k\mathsf B_1(\mathsf e_i,\mathsf e_i)
+\sum_{i=1}^k\widetilde{\mathsf B}(\mathsf e_i,\mathsf e_i)\nu=\sum_{i=1}^k\widetilde{\mathsf B}(\mathsf e_i,\mathsf e_i)\nu,
\end{align}
where $\{\mathsf e_i\}_{i=1}^{k}$ is a local orthonormal frame on $N$ such that $\widetilde{\mathsf B}(\mathsf e_i,\mathsf e_j)=\lambda_i\delta_{ij}$.

It follows from Gauss equation, \eqref{5.2}, \eqref{5.3},  and curvature assumption that for any unit tangent vector $\mathsf x$ to $N$,
\begin{equation}
\begin{aligned}
\mathsf F_y ({\mathsf x}) &=\sum_{i=1}^k\big (4\langle \mathsf  B(\mathsf x,\mathsf e_i),\mathsf  B(\mathsf x,\mathsf e_i)\rangle-\langle \mathsf  B(\mathsf x,\mathsf x),\mathsf  B(\mathsf e_i,\mathsf e_i)\rangle\big )\nonumber\\
&=-4\langle \operatorname{Ric}^N(\mathsf x),\mathsf x\rangle + 3\sum_{i=1}^n\langle \mathsf  B(\mathsf x,\mathsf x),\mathsf B(\mathsf e_i,\mathsf e_i)\rangle\nonumber\\
&=-4\langle \operatorname{Ric}^N(\mathsf x),\mathsf x\rangle+3\sum_{i=1}^k\widetilde{\mathsf  B}(\mathsf x,\mathsf x)\widetilde{\mathsf  B}(\mathsf e_i,\mathsf e_i)\nonumber\\
&=-4\langle \operatorname{Ric}^N(\mathsf x),\mathsf x\rangle+3(\lambda_1+\cdots+\lambda_k)\widetilde{\mathsf B}(\mathsf x,\mathsf x)\nonumber\\
&\leq-4\langle \operatorname{Ric}^N(\mathsf x),\mathsf x\rangle+3(\lambda_1+\cdots+\lambda_k)\lambda_{q-1}\nonumber\\
&\leq-4\langle \operatorname{Ric}^N(\mathsf x),\mathsf x\rangle+3k\lambda^2_{q-1} < 0,\nonumber
\end{aligned}
\end{equation}
Consequently, $N$ is $\Phi$-SSU.
\end{proof}

\begin{lem}(\cite{11,7})\label{L:5.1}
Let $E^{q-1}$ be the ellipsoid given by 
\begin{equation}
E^{q-1}=\left\{(x_1,\cdots,x_{q})\in \mathbb R^{q}:\frac{x_1^2}{a_1^2}+\cdots+\frac{x_{q}^2}{a_{q}^2}=1, a_i>0, 1\leq i\leq q\right\}.
\end{equation}
Suppose $\{\lambda_i\}_{i=1}^{q-1}$ is a family of principal curvatures of $E^{q-1}$ in $\mathbb R^{q}$ satisfying $0<\lambda_1\leq \cdots \le \lambda_{q-1}$. Then
we have
\begin{eqnarray}
\quad \frac{\min_{1\leq i\leq q}\{a_i\}}{(\max_{1\leq i\leq q}\{a_i\})^2}\leq \lambda_1\leq \cdots\leq \lambda_{q-1}\leq \frac{\max_{1\leq i\leq q}\{a_i\}}{(\min_{1\leq i\leq q}\{a_i\})^2}.
\end{eqnarray}
\end{lem}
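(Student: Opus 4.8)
The plan is to realize $E^{q-1}$ as the regular level set $\{F=1\}$ of the quadratic function $F(x)=\langle Ax,x\rangle$, where $A=\operatorname{diag}(a_1^{-2},\dots,a_q^{-2})$ is a symmetric positive definite matrix, and to read the principal curvatures off the shape operator of this level set. Since $F$ is quadratic its Hessian is the constant matrix $2A$ and $\nabla F=2Ax$, so the unit normal is $\nu=Ax/|Ax|$. Computing $\langle\nabla_X\nu,Y\rangle$ for tangent vectors $X,Y$ (the normal component of $\nabla_X\nu$ drops out upon pairing with the tangent vector $Y$) shows that, with the sign convention making the convex surface positively curved, the second fundamental form is $h(X,Y)=\langle AX,Y\rangle/|Ax|$. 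Hence the shape operator is $S=\frac{1}{|Ax|}\,P_T A\big|_{T_xE^{q-1}}$, where $P_T$ is orthogonal projection onto the tangent space, and the principal curvatures $\lambda_1\leq\cdots\leq\lambda_{q-1}$ are exactly its eigenvalues.

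Next I would bound these eigenvalues by decoupling the scalar factor $1/|Ax|$ from the restriction of $A$. For any unit tangent vector $X\in T_xE^{q-1}\subset\mathbb R^q$ the Rayleigh quotient of the restricted operator equals $\langle AX,X\rangle$, which lies in $[\lambda_{\min}(A),\lambda_{\max}(A)]=[a_{\max}^{-2},a_{\min}^{-2}]$, writing $a_{\max}=\max_i a_i$ and $a_{\min}=\min_i a_i$. By the min--max (Courant--Fischer) characterization, every eigenvalue $\mu_j$ of $P_T A|_T$ therefore satisfies $a_{\max}^{-2}\leq\mu_j\leq a_{\min}^{-2}$.

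I would then control the factor $|Ax|$ along the ellipsoid. Setting $t_i=x_i^2/a_i^2\geq 0$, the constraint $x\in E^{q-1}$ reads $\sum_i t_i=1$, while $|Ax|^2=\sum_i x_i^2/a_i^4=\sum_i t_i\,a_i^{-2}$ is a convex combination of the numbers $a_i^{-2}$. Hence $a_{\max}^{-2}\leq|Ax|^2\leq a_{\min}^{-2}$, that is $a_{\max}^{-1}\leq|Ax|\leq a_{\min}^{-1}$. Combining this with the eigenvalue bound and pairing the extremes in the correct order gives, for every principal curvature $\lambda=\mu_j/|Ax|$,
\[
\frac{a_{\min}}{a_{\max}^2}=\frac{a_{\max}^{-2}}{a_{\min}^{-1}}\leq\lambda\leq\frac{a_{\min}^{-2}}{a_{\max}^{-1}}=\frac{a_{\max}}{a_{\min}^2},
\]
which is exactly the asserted estimate, since the bounds hold uniformly over all $x\in E^{q-1}$.

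The computation is almost entirely routine; the one point demanding care, and the main obstacle, is that one must not estimate $\lambda=\mu_j/|Ax|$ by blindly combining the individual extreme values. Instead one pairs the smallest admissible numerator $a_{\max}^{-2}$ with the largest denominator $a_{\min}^{-1}$ for the lower bound, and the reverse for the upper bound, and checks that these extremal configurations are simultaneously admissible — which they are, because the bound on $\mu_j$ and the bound on $|Ax|$ are independent across the choices of point $x$ and tangent direction. One should also fix the orientation so that the convex ellipsoid has positive principal curvatures, consistent with the hypothesis $0<\lambda_1$.
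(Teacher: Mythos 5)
Your proof is correct. Note that the paper itself offers no proof of Lemma 5.1 at all --- it is quoted from \cite{11,7} --- so there is no internal argument to compare against; your level-set computation is the standard, self-contained route to this estimate. All three ingredients are sound: writing $E^{q-1}=\{\langle Ax,x\rangle=1\}$ with $A=\operatorname{diag}(a_1^{-2},\dots,a_q^{-2})$, the shape operator is $\frac{1}{|Ax|}\,P_T A|_{T}$ because the component of $\nabla_X\nu$ along $Ax$ is normal and pairs to zero with tangent vectors; the compression $P_T A|_T$ of the positive definite matrix $A$ to the tangent hyperplane is self-adjoint and, by Courant--Fischer, has all its eigenvalues in $[a_{\max}^{-2},a_{\min}^{-2}]$; and $|Ax|^2=\sum_i t_i a_i^{-2}$ with $t_i=x_i^2/a_i^2\ge 0$, $\sum_i t_i=1$, is a convex combination, giving $a_{\max}^{-1}\le |Ax|\le a_{\min}^{-1}$. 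Your closing worry about "simultaneous admissibility" is in fact a non-issue: both the eigenvalue bound and the bound on $|Ax|$ hold at every single point $x$, so the quotient estimate $a_{\min}/a_{\max}^2\le \mu_j/|Ax|\le a_{\max}/a_{\min}^2$ is a pointwise inequality and the uniform conclusion is immediate.
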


\begin{thm}\label{T:5.2}
A compact minimal  $k$-submanifold $N$ of an ellipsoid $E^{q-1}$ in $\mathbb R^{q}$ with $\langle \operatorname{Ric}^N(\mathsf x),\mathsf x\rangle > \frac{3}{4}\frac{(\max_{1\leq i\leq q}\{a_i\})^2}{(\min_{1\leq i\leq q}\{a_i\})^4}k$ for any unit tangent vector $\mathsf x$ to $N$ is $\Phi$-SSU.
\end{thm}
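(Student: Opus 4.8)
The plan is to deduce Theorem \ref{T:5.2} directly from the preceding theorem on minimal submanifolds of convex hypersurfaces, combined with the explicit eigenvalue bound of Lemma \ref{L:5.1}. First I would observe that the ellipsoid $E^{q-1}$ is a compact convex hypersurface of $\mathbb R^q$ whose principal curvatures $\lambda_1\le\cdots\le\lambda_{q-1}$ are all strictly positive, so that $N$, being a compact connected minimal $k$-submanifold of $\widetilde N=E^{q-1}$, fits precisely into the hypotheses of that theorem with $\widetilde N$ taken to be the ellipsoid. Thus it suffices to verify the curvature condition $\langle \operatorname{Ric}^N(\mathsf x),\mathsf x\rangle > \frac34 k\,\lambda_{q-1}^2$ for every unit tangent vector $\mathsf x$ to $N$, and the conclusion that $N$ is $\Phi$-SSU will follow.

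The key step is to control the largest principal curvature $\lambda_{q-1}$ of $E^{q-1}$ from above. By Lemma \ref{L:5.1},
\[
\lambda_{q-1}\le \frac{\max_{1\le i\le q}\{a_i\}}{\big(\min_{1\le i\le q}\{a_i\}\big)^2},
\]
and, since all quantities are positive, squaring and multiplying by $\frac34 k$ gives
\[
\frac34 k\,\lambda_{q-1}^2 \;\le\; \frac34\,\frac{\big(\max_{1\le i\le q}\{a_i\}\big)^2}{\big(\min_{1\le i\le q}\{a_i\}\big)^4}\,k.
\]
Combining this with the standing hypothesis $\langle \operatorname{Ric}^N(\mathsf x),\mathsf x\rangle > \frac34\frac{(\max_i a_i)^2}{(\min_i a_i)^4}k$ yields, for every unit tangent vector $\mathsf x$ to $N$,
\[
\langle \operatorname{Ric}^N(\mathsf x),\mathsf x\rangle \;>\; \frac34\,\frac{\big(\max_i a_i\big)^2}{\big(\min_i a_i\big)^4}\,k \;\ge\; \frac34 k\,\lambda_{q-1}^2.
\]

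Finally I would invoke the preceding theorem, whose hypotheses are now all satisfied, to conclude that $N$ is $\Phi$-SSU. There is essentially no analytic obstacle here: the statement is a corollary obtained by feeding the uniform eigenvalue estimate of Lemma \ref{L:5.1} into the convex-hypersurface criterion. The only points deserving care are confirming that $E^{q-1}$ is genuinely convex with strictly positive principal curvatures (so that the criterion applies and the bound on $\lambda_{q-1}$ is available), and noting that the entire chain of inequalities is uniform in $\mathsf x$ and over all of $N$, which holds because the upper bound on $\lambda_{q-1}$ depends only on the semi-axes $a_i$ and not on the point of $N$ or the chosen direction.
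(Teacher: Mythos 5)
Your proposal is correct and is essentially the paper's own argument: the paper proves this theorem in one line by citing the preceding convex-hypersurface criterion together with Lemma \ref{L:5.1}, exactly as you do (the paper's citation of ``Theorem \ref{T:5.2}'' in its proof is a self-referential typo for that convex-hypersurface theorem). You have merely written out the intermediate inequality $\frac34 k\,\lambda_{q-1}^2 \le \frac34\,\frac{(\max_i a_i)^2}{(\min_i a_i)^4}\,k$ that the paper leaves implicit.
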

\begin{proof} This follows at once from Theorem \ref{T:5.2} and Lemma \ref{L:5.1}.
\end{proof}

\begin{cor}\label{C:5.4}
A compact minimal $k$-submanifold $N$ of the unit sphere $S^{q-1}$ with $\langle \operatorname{Ric}^N(\mathsf x),\mathsf x\rangle > \frac{3}{4}k$ for any unit tangent vector $\mathsf x$ to $N$ is $\Phi$-SSU.
Thus $N$ satisfies Theorem \ref{T:1.1} $(a)$, $(b)$, $(c)$, and $(d)$.\end{cor}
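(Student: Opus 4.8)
The plan is to realize the unit sphere as the most degenerate ellipsoid and then invoke Theorem \ref{T:5.2} essentially verbatim. First I would observe that $S^{q-1} \subset \mathbb{R}^q$ is precisely the ellipsoid $E^{q-1}$ of Lemma \ref{L:5.1} with all semi-axes equal, $a_1 = \cdots = a_q = 1$. Consequently $\max_{1\le i\le q}\{a_i\} = \min_{1\le i\le q}\{a_i\} = 1$, so the curvature threshold appearing in Theorem \ref{T:5.2}, namely $\frac{3}{4}\frac{(\max_{1\le i\le q}\{a_i\})^2}{(\min_{1\le i\le q}\{a_i\})^4}k$, collapses to exactly $\frac{3}{4}k$.

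Next I would apply Theorem \ref{T:5.2} directly: since $N$ is a compact minimal $k$-submanifold of $E^{q-1} = S^{q-1}$ and the standing hypothesis $\langle \operatorname{Ric}^N(\mathsf x),\mathsf x\rangle > \frac{3}{4}k$ is, by the previous paragraph, precisely the specialization of the ellipsoid hypothesis, the theorem yields at once that $N$ is $\Phi$-SSU. Equivalently, one may argue straight from the convex-hypersurface theorem preceding Lemma \ref{L:5.1}: the sphere has all principal curvatures equal to $1$, so in that statement $\lambda_{q-1}=1$ and the required bound $\frac{3}{4}k\lambda_{q-1}^2$ is again $\frac{3}{4}k$, giving $\mathsf F_y(\mathsf x)<0$ for every unit $\mathsf x$ and hence the $\Phi$-SSU property via Definition \ref{D:1.3}.

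Finally, for the concluding assertion I would note that $N$ is compact by hypothesis and $\Phi$-SSU by the first part, so it satisfies the hypotheses of Theorem \ref{T:1.1}; invoking that theorem immediately delivers properties (a), (b), (c), and (d).

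There is no real analytic obstacle here, as the corollary is a direct specialization of Theorem \ref{T:5.2}. The only point deserving a moment's care is the bookkeeping with the principal curvatures: confirming that the unit sphere is indeed the ellipsoid with all axes equal to $1$ and that the threshold degenerates cleanly to $\frac{3}{4}k$. This is immediate once the identification $a_i \equiv 1$ is in place, so the argument reduces to citing the two already-established results in sequence.
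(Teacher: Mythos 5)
Your proposal is correct and matches the paper's own proof, which likewise obtains the corollary by specializing the ellipsoid theorem (Theorem \ref{T:5.2}) to $a_1=\cdots=a_q=1$ so that the threshold becomes $\frac{3}{4}k$, and then invokes Theorem \ref{T:1.1} for the final assertion. Your alternative remark via the convex-hypersurface theorem with $\lambda_{q-1}=1$ is just an unwinding of how Theorem \ref{T:5.2} itself is proved, so it is the same argument one level deeper.
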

\begin{proof} This follows at once from Theorem \ref{T:5.2} in which $a_1 = \cdots = a_q =1\, .$
\end{proof}
\begin{remark} The case $N$ as in Corollary \ref{C:5.4} satisfies Theorem \ref{T:1.1} $(a)$ and $(c)$ are due to S. Kawai and N. Nakauchi (cf. \cite{KN2}).\label{R:5.1}\end{remark}
\begin{thm}
Let $N$ be a compact $k$-submanifold of the unit sphere $S^{q-1}$ in $\mathbb R^{q}$, $k > 4$, and Let $\mathsf B_1$ be the second fundamental form of $N$ in $S^{q-1}$. If
\begin{align}
||\mathsf B_1||^2<\frac{k-4}{\sqrt{k}+4},\nonumber
\end{align}
then $N$ is $\Phi$-SSU.
\end{thm}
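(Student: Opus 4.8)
The plan is to verify Definition~\ref{D:1.3} directly: I will compute the functional $\mathsf F_y(\mathsf x)$ of \eqref{1.5} for the given isometric immersion $N\hookrightarrow S^{q-1}\hookrightarrow\mathbb R^q$ and show it is negative for every unit $\mathsf x\in\widehat{T}_yN$ and every $y\in N$. First I would resolve the second fundamental form $\mathsf B$ of $N$ in $\mathbb R^q$ through the intermediate hypersurface $S^{q-1}$, exactly as in \eqref{5.2}: for tangent fields $\mathsf X,\mathsf Y$ to $N$,
\[
\mathsf B(\mathsf X,\mathsf Y)=\mathsf B_1(\mathsf X,\mathsf Y)+\widetilde{\mathsf B}(\mathsf X,\mathsf Y)\,\nu,
\]
where $\nu$ is the unit normal of $S^{q-1}$ in $\mathbb R^q$ and $\mathsf B_1$ is the second fundamental form of $N$ in $S^{q-1}$, which is orthogonal to $\nu$. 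Because $S^{q-1}$ is the \emph{unit} sphere it is totally umbilic with all principal curvatures equal to $1$, so $\widetilde{\mathsf B}(\mathsf X,\mathsf Y)=\langle\mathsf X,\mathsf Y\rangle$; in particular $\widetilde{\mathsf B}(\mathsf x,\mathsf x)=\widetilde{\mathsf B}(\mathsf e_\beta,\mathsf e_\beta)=1$ and $\widetilde{\mathsf B}(\mathsf x,\mathsf e_\beta)=\langle\mathsf x,\mathsf e_\beta\rangle$ for a local orthonormal frame $\{\mathsf e_1,\dots,\mathsf e_k\}$ on $N$.

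Next I would substitute this splitting into \eqref{1.5}. Since $|\nu|=1$, $\mathsf B_1\perp\nu$, and $\sum_{\beta=1}^k\langle\mathsf x,\mathsf e_\beta\rangle^2=|\mathsf x|^2=1$, the scalar and $\mathsf B_1$ parts decouple and all cross terms cancel, leaving
\[
\mathsf F_y(\mathsf x)=(4-k)+4\sum_{\beta=1}^k|\mathsf B_1(\mathsf x,\mathsf e_\beta)|^2-\Big\langle \mathsf B_1(\mathsf x,\mathsf x),\sum_{\beta=1}^k\mathsf B_1(\mathsf e_\beta,\mathsf e_\beta)\Big\rangle.
\]
It then remains to bound the two curvature terms by $\|\mathsf B_1\|^2=\sum_{\alpha,\beta}|\mathsf B_1(\mathsf e_\alpha,\mathsf e_\beta)|^2$. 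Expanding $\mathsf B_1(\mathsf x,\mathsf e_\beta)=\sum_\mu\langle\mathsf A_1^{\xi_\mu}\mathsf x,\mathsf e_\beta\rangle\xi_\mu$ in an orthonormal normal frame $\{\xi_\mu\}$ of $N$ in $S^{q-1}$ gives $\sum_\beta|\mathsf B_1(\mathsf x,\mathsf e_\beta)|^2=\sum_\mu|\mathsf A_1^{\xi_\mu}\mathsf x|^2\le\sum_\mu\|\mathsf A_1^{\xi_\mu}\|^2=\|\mathsf B_1\|^2$, where I use the operator-norm $\le$ Hilbert--Schmidt-norm bound, valid for every unit $\mathsf x$. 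The same computation yields $|\mathsf B_1(\mathsf x,\mathsf x)|\le\|\mathsf B_1\|$, while the Cauchy--Schwarz estimate $\big|\sum_\beta\mathsf B_1(\mathsf e_\beta,\mathsf e_\beta)\big|^2\le k\sum_\beta|\mathsf B_1(\mathsf e_\beta,\mathsf e_\beta)|^2\le k\|\mathsf B_1\|^2$ controls the (unnormalized) mean curvature vector of $N$ in $S^{q-1}$.

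Combining these bounds gives
\[
\mathsf F_y(\mathsf x)\le(4-k)+4\|\mathsf B_1\|^2+\sqrt{k}\,\|\mathsf B_1\|^2=(4-k)+(\sqrt{k}+4)\|\mathsf B_1\|^2,
\]
which is negative precisely when $\|\mathsf B_1\|^2<\frac{k-4}{\sqrt{k}+4}$ (note $k-4>0$ since $k>4$); as $\mathsf x$ and $y$ were arbitrary, $N$ is $\Phi$-$\operatorname{SSU}$. The hard part will be the estimate of the cross term $\langle\mathsf B_1(\mathsf x,\mathsf x),\sum_\beta\mathsf B_1(\mathsf e_\beta,\mathsf e_\beta)\rangle$: unlike Corollary~\ref{C:5.4} and the minimal-submanifold results preceding it, $N$ is \emph{not} assumed minimal in $S^{q-1}$, so its mean curvature vector need not vanish and must be bounded explicitly. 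This is exactly where the factor $\sqrt{k}$ enters and produces the denominator $\sqrt{k}+4$; organizing the two Cauchy--Schwarz steps so that no extra power of $k$ is wasted is what makes the constant in the hypothesis sharp.
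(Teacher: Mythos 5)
Your proposal is correct and follows essentially the same route as the paper: the splitting $\mathsf B=\mathsf B_1+\widetilde{\mathsf B}\,\nu$ from \eqref{5.2} with the unit sphere totally umbilic, the resulting identity $\mathsf F_y(\mathsf x)=(4-k)+\sum_{\beta}\bigl(4|\mathsf B_1(\mathsf x,\mathsf e_\beta)|^2-\langle\mathsf B_1(\mathsf x,\mathsf x),\mathsf B_1(\mathsf e_\beta,\mathsf e_\beta)\rangle\bigr)$, and the two Cauchy--Schwarz estimates giving $\mathsf F_y(\mathsf x)\le(4+\sqrt{k})\|\mathsf B_1\|^2-(k-4)<0$. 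The only cosmetic difference is that you justify $\sum_\beta|\mathsf B_1(\mathsf x,\mathsf e_\beta)|^2\le\|\mathsf B_1\|^2$ via shape operators, where the paper uses it implicitly by extending $\mathsf x$ to an orthonormal frame.
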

\begin{proof}
From \eqref{5.2} (in which $\widetilde{N} = S^{q-1}$) and Cauchy-Schwarz inequality, for any unit tangent vector $\mathsf x$ to $N$, we have
\begin{align}
\mathsf F_y ({\mathsf x}) &=\sum_{i=1}^k\big (4\langle \mathsf B(\mathsf x,\mathsf e_i),\mathsf B(\mathsf x,\mathsf e_i)\rangle-\langle \mathsf B(\mathsf x,\mathsf x),\mathsf B(\mathsf e_i,\mathsf e_i)\rangle\big )\nonumber\\
&=\sum_{i=1}^k\big (4\langle \mathsf B_1(\mathsf x,\mathsf e_i),\mathsf B_1(\mathsf x,\mathsf e_i)\rangle-\langle \mathsf B_1(\mathsf x,\mathsf x),\mathsf B_1(\mathsf e_i,\mathsf e_i)\rangle\big )-(k-4)\nonumber\\
&\le 4 \sum_{i=1}^k \langle \mathsf B_1(\mathsf x,\mathsf e_i),\mathsf B_1(\mathsf x,\mathsf e_i)\rangle + | \mathsf B_1(\mathsf x,\mathsf x) | \big ( \big |\sum_{i=1}^k \mathsf B_1(\mathsf e_i,\mathsf e_i) \big |^2 \big )^{\frac 12} -(k-4)\nonumber\\
&\leq4||\mathsf B_1||^2+\sqrt{k}|\mathsf B_1(\mathsf x,\mathsf x)|\big (\sum_{i=1}^k\big |\mathsf B_1(\mathsf e_i,\mathsf e_i) \big |^2\big )^{\frac{1}{2}}-(k-4)\nonumber\\
&\leq(4+\sqrt{k})||\mathsf B_1||^2-(k-4)<0.\nonumber
\end{align}
So we obtain that $N$ is $\Phi$-SSU.
\end{proof}

\section{$\Phi$-SSU manifolds, $p$-SSU manifolds,  and Topology }

\begin{thm}\label{T: 6.1}
$(i)$ Every $\Phi$-SSU manifold $N$ is $p$-SSU for any $2 \le p \le 4$ \\
$(ii)$  Every compact $\Phi$-SSU manifold $N$ is $4$-connected, i.e. $\pi_1(N) = \cdots = \pi_{4}(N) = 0$.
\end{thm}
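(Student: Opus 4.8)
The plan is to handle the two parts separately, deducing (ii) from (i) together with the established topology of $p$-SSU manifolds. For (i) I would argue pointwise: fix $y \in N$ and a unit tangent vector $\mathsf x \in T_yN$, and choose the orthonormal frame $\{\mathsf e_\beta\}_{\beta=1}^n$ so that $\mathsf e_1 = \mathsf x$. Abbreviate
\[
S = \sum_{\beta=1}^n \langle \mathsf B(\mathsf x,\mathsf e_\beta), \mathsf B(\mathsf x,\mathsf e_\beta)\rangle, \quad T = \sum_{\beta=1}^n \langle \mathsf B(\mathsf x,\mathsf x), \mathsf B(\mathsf e_\beta,\mathsf e_\beta)\rangle, \quad R = \langle \mathsf B(\mathsf x,\mathsf x), \mathsf B(\mathsf x,\mathsf x)\rangle.
\]
In this notation the $\Phi$-SSU hypothesis \eqref{1.5} reads $\mathsf F_y(\mathsf x) = 4S - T < 0$, while by \eqref{1.3} the $p$-SSU functional \eqref{1.4} becomes $\mathsf F_{p,y}(\mathsf x) = (p-2)R + 2S - T$.

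The only nontrivial input is the elementary bound $0 \le R \le S$, which holds because each summand of $S$ is a nonnegative squared norm and the term with $\mathsf e_1 = \mathsf x$ equals $R$. For $2 \le p \le 4$, using $p - 2 \ge 0$ and then $p \le 4$, $S \ge 0$, I obtain
\[
\mathsf F_{p,y}(\mathsf x) = (p-2)R + 2S - T \le (p-2)S + 2S - T = pS - T \le 4S - T = \mathsf F_y(\mathsf x) < 0.
\]
Since $y$ and $\mathsf x$ are arbitrary, $N$ is $p$-SSU for every $2 \le p \le 4$, which is (i). I expect this computation to be entirely routine.

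For (ii) I would specialize (i) to $p = 4$, so that a $\Phi$-SSU manifold is in particular $4$-SSU, and then invoke the topology of $p$-SSU manifolds: a compact $p$-SSU manifold satisfies $\pi_k(N) = 0$ for every integer $1 \le k \le p$ (the $p$-harmonic analogue of the connectivity theorem for SSU manifolds, cf. \cite{WY,13}). With $p = 4$ this yields $\pi_1(N) = \cdots = \pi_4(N) = 0$. By Corollary \ref{C:5.2} the value $4$ is sharp: $S^n$ is $\Phi$-SSU precisely for $n \ge 5$, and such spheres are $4$-connected, whereas $S^4$ is not $\Phi$-SSU and indeed has $\pi_4(S^4) \neq 0$.

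The genuinely substantive step, and the one I expect to be the main obstacle, is the $p$-SSU connectivity result feeding (ii). Its proof proceeds by observing that $4$-SSU yields, via the $p$-energy analogue of Theorem \ref{T:1.1}(b), that each homotopy class of maps $S^k \to N$ with $1 \le k \le 4$ contains representatives of arbitrarily small $4$-energy, after which a gap/$\epsilon$-regularity argument forces the class to be trivial whenever $k \le p$. The delicate point is the borderline case $k = p = 4$, where the Sobolev embedding $W^{1,4}(S^4, N) \hookrightarrow C^0$ just fails and one must argue more carefully; I would dispatch this exactly as in the cited $p$-SSU theory rather than reprove it here.
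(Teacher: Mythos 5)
Your proposal is correct and takes essentially the same approach as the paper: for (i) the paper runs the identical pointwise chain $\mathsf F_{p,y}(\mathsf x)=(p-2)R+2S-T\le pS-T\le 4S-T=\mathsf F_y(\mathsf x)<0$, using $R\le S$ and $2\le p\le 4$ exactly as you do, and for (ii) it likewise specializes to $p=4$ and quotes the theorem that every compact $p$-SSU manifold is $[p]$-connected (Theorem 3.10 of \cite{13}). Your added remarks on sharpness via $S^4$ and on how the cited connectivity theorem is proved are extra commentary, not a different route.
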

\begin{proof}
$(i)$ From \eqref{1.5}, we have
\begin{align}
\mathsf F_y(\mathsf x)=\sum_{i=1}^n\big (4\langle \mathsf B(\mathsf x,\mathsf e_i),\mathsf B(\mathsf x,\mathsf e_i)\rangle-\langle \mathsf B(\mathsf x,\mathsf x),\mathsf B(\mathsf e_i,\mathsf e_i)\rangle\big )<0\nonumber\\
\end{align}
for all unit tanget vector $\mathsf x \in T_y(N)$. It follows that
\begin{align}
 \mathsf F_{p,y}(\mathsf x)&=(p-2)\langle \mathsf B(\mathsf x,\mathsf x),\mathsf B(\mathsf x,\mathsf x)\rangle+\sum_{i=1}^n\big (2\langle \mathsf B(\mathsf ,\mathsf e_i),\mathsf B(\mathsf x,\mathsf e_i)\rangle-\langle \mathsf B(\mathsf x,\mathsf x),\mathsf B(\mathsf e_i,\mathsf e_i)\rangle\big ) \nonumber\\
&\leq\sum_{i=1}^n\big (p\langle \mathsf B(\mathsf x,\mathsf e_i),\mathsf B(\mathsf x,\mathsf e_i)\rangle-\langle \mathsf B(\mathsf x,\mathsf x),\mathsf B(\mathsf e_i,\mathsf e_i)\rangle\big )\nonumber \\
&\leq\sum_{i=1}^n\big (4\langle \mathsf B(\mathsf x,\mathsf e_i),\mathsf B(\mathsf x,\mathsf e_i)\rangle-\langle \mathsf B(\mathsf x,\mathsf x),\mathsf B(\mathsf e_i,\mathsf e_i)\rangle\big )<0\nonumber,
\end{align}
for $2\leq p\leq 4$.
So by \eqref{1.4}, $N$ is $p$-SSU for any $2\leq p\leq 4$.\\

$(ii)$ Since every compact $p$-SSU is $[p]$-connected (cf.  \cite [Theorem 3.10 , p. 645]{13}), $(ii)$ follows from $(i)$. 
\end{proof}
As an immediate application of Theorem \ref{T: 6.1}, we make the following
\begin{remark}
All examples of compact $\Phi$-SSU manifolds as discussed in Section 5, i.e., manifolds as in Theorems \ref{T:5.2}, 5.3, and 5.4 and Corollaries \ref{C:5.2}, and \ref{C:5.4}  are $4$-connected.
\end{remark}\label{R:6.1}

\section{The dimension of $\Phi$-SSU manifolds}

Now we recall a theorem in \cite{WY} states that
the dimension of any $p$-SSU manifold $N$ is greater than $p\, , p\ge 2$. Analogously, we have

\begin{thm}\label{T:7.1}
The dimension of any compact $\Phi$-SSU manifolds $N$ is greater than $4$.
\end{thm}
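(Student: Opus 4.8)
The plan is to carry over, essentially verbatim, the averaging argument underlying the recalled $p$-SSU dimension bound from \cite{WY}, now applied to the functional $\mathsf F_y$ of Definition \ref{D:1.3}. Fix a point $y \in N$ together with an isometric immersion $N \hookrightarrow \mathbb R^q$ witnessing that $N$ is $\Phi$-SSU, and let $\{\mathsf e_1, \dots, \mathsf e_n\}$ be an orthonormal basis of $T_yN$. The key idea is not to test \eqref{1.5} on a single unit vector, but to \emph{sum} it over the whole basis, so that the two terms collapse into the squared norm of the second fundamental form and the squared norm of the mean curvature vector.

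Concretely, write $\|\mathsf B\|^2 = \sum_{\alpha, \beta = 1}^n \langle \mathsf B(\mathsf e_\alpha, \mathsf e_\beta), \mathsf B(\mathsf e_\alpha, \mathsf e_\beta)\rangle$ and let $\mathsf H = \sum_{\alpha=1}^n \mathsf B(\mathsf e_\alpha, \mathsf e_\alpha)$ be the (unnormalized) mean curvature vector. The first step is the identity
\[
\sum_{\alpha=1}^n \mathsf F_y(\mathsf e_\alpha) = 4\|\mathsf B\|^2 - |\mathsf H|^2,
\]
which is immediate from \eqref{1.5} upon summing. Since $N$ is $\Phi$-SSU, every summand on the left is strictly negative, so $4\|\mathsf B\|^2 < |\mathsf H|^2$. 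The second step is a Cauchy--Schwarz estimate in the normal space, namely $|\mathsf H|^2 = |\sum_\alpha \mathsf B(\mathsf e_\alpha, \mathsf e_\alpha)|^2 \le n \sum_\alpha |\mathsf B(\mathsf e_\alpha, \mathsf e_\alpha)|^2 \le n\|\mathsf B\|^2$, where the last inequality simply discards the off-diagonal terms of $\|\mathsf B\|^2$. Combining the two steps gives $4\|\mathsf B\|^2 < n\|\mathsf B\|^2$.

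Finally I would observe that $\|\mathsf B\|^2 > 0$ at $y$: if the immersion were totally geodesic at $y$ then $\mathsf B_y \equiv 0$ and $\mathsf F_y(\mathsf x)$ would vanish for every unit $\mathsf x$, contradicting the strict inequality in Definition \ref{D:1.3}. Dividing by $\|\mathsf B\|^2 > 0$ then yields $4 < n$, i.e. $\dim N > 4$. I do not expect a genuine obstacle; the only points requiring care are the bookkeeping in the first identity and the fact that the second fundamental form cannot degenerate, which is forced by the strictness in \eqref{1.5}. Alternatively, and even more briefly, one could invoke Theorem \ref{T: 6.1}$(i)$ to conclude that $N$ is $4$-SSU and then apply the cited \cite{WY} bound $\dim N > p$ with $p=4$; I would nonetheless present the self-contained averaging argument as the primary proof, since it is the direct analog of the $p$-SSU case that the statement alludes to.
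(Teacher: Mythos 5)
Your proof is correct, but it follows a genuinely different route from the paper's primary argument. The paper's ``First Proof'' is a maximum-principle argument: it fixes the unit vector $\mathsf x$ maximizing $|\mathsf B(\mathsf x,\mathsf x)|^2$ over the unit sphere $\widehat{T}_yN$, differentiates $\psi(t)=|\mathsf B(C(t),C(t))|^2$ twice along curves through $\mathsf x$ to obtain the extremality inequality \eqref{7.1}, namely $|\mathsf B(\mathsf x,\mathsf x)|^2 \ge 2|\mathsf B(\mathsf x,\mathsf y)|^2 + \langle \mathsf B(\mathsf x,\mathsf x),\mathsf B(\mathsf y,\mathsf y)\rangle$ for all unit $\mathsf y \perp \mathsf x$, sums this over an orthonormal complement of $\mathsf x$, and adds the result to the single $\Phi$-SSU inequality $\mathsf F_y(\mathsf x)<0$ at that extremal direction; the two estimates combine into \eqref{7.4}, $0 > (4-n)|\mathsf B(\mathsf x,\mathsf x)|^2 + 6\sum_{i\ge 2}|\mathsf B(\mathsf x,\mathsf e_i)|^2$, forcing $n>4$. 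You instead sum $\mathsf F_y$ over an entire orthonormal basis, identify the total as $4\|\mathsf B\|^2 - |\mathsf H|^2$, and close the argument with Cauchy--Schwarz and the discarding of off-diagonal terms to get $|\mathsf H|^2 \le n\|\mathsf B\|^2$, together with the observation that strict negativity in Definition \ref{D:1.3} rules out $\mathsf B_y\equiv 0$. Both arguments are pointwise and elementary, but yours avoids the extremal vector and the second-derivative test entirely, and it makes the role of the constant $4$ in \eqref{1.5} completely transparent; the paper's version, closer in spirit to the $p$-SSU argument of \cite{WY} that it imitates, yields the sharper localized inequality \eqref{7.4} concentrated at the maximizing direction. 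Your brief fallback --- deduce from Theorem \ref{T: 6.1}$(i)$ that $N$ is $4$-SSU and quote the $p$-SSU dimension bound of \cite{WY} --- is exactly the paper's ``Second Proof,'' so that alternative coincides with the paper rather than replacing it.
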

\begin{proof}[First Proof:]
Following the idea in \cite {WY} by using a maximum principle, we assume  
$|\mathsf B(\mathsf x,\mathsf x)|^2=\max_{\mathsf y\in \widehat{T}_yN}|\mathsf B(\mathsf y,\mathsf y)|^2$ 
for a fixed point $y\in N$, where $\widehat{T}_yN$ is the set of all unit tangent vectors to $N$ at the point $y$.
Let $C(t)$ be a curve in $\widehat {T}_yN$ such that $C(0)=\mathsf x$ and $C^{\prime}(0)=\mathsf y$.
Define $\psi(t)=|\mathsf B\big (C(t),C(t)\big )|^2$. Then $\psi^{\prime}(0)=0$ yields $\langle \mathsf B(\mathsf x,\mathsf x),\mathsf B(\mathsf x,\mathsf y)\rangle=0$ for all $\mathsf y\in \widehat{T}_yN$ such that $\langle \mathsf x,\mathsf y\rangle_N=0$.

Now let $\{\mathsf e_1=\mathsf x,\mathsf e_2,\cdots,\mathsf e_n\}$ be a local orthonormal frame of $N$ near $y$, and let $C^{\prime\prime}(0) = c_1 \mathsf x + c_2 \mathsf e_2 + \cdots + c_n \mathsf e_n\, .$ Then $c_1 = \langle C^{\prime\prime}(0), C(0)\rangle = - \langle C^{\prime}(0), C^{\prime}(0)\rangle = -1$ and $\langle \mathsf B\big (C^{\prime\prime}(0),\mathsf x\big ),\mathsf B(\mathsf x,\mathsf x)\rangle = \langle \mathsf B(c_1 \mathsf x + c_2 \mathsf e_2 + \cdots + c_n \mathsf e_n,\mathsf x),\mathsf B(\mathsf x,\mathsf x)\rangle = - \langle \mathsf B(\mathsf x,\mathsf x),\mathsf B(\mathsf x,\mathsf x)\rangle\, .$
Hence, by a maximum principle, $\psi^{\prime\prime}(0) \le 0$ which gives \begin{align}|\mathsf B(\mathsf x,\mathsf x)|^2\geq 2|\mathsf B(\mathsf x,\mathsf y)|^2+\langle \mathsf B(\mathsf x,\mathsf x),\mathsf B(\mathsf y,\mathsf y)\rangle \label{7.1}
\end{align} 
for all $\mathsf y\in\widehat{T}_yN$ such that $\langle \mathsf x,\mathsf y\rangle_N=0$. Substituting $\mathsf y = \mathsf e_i$ into \eqref{7.1} and summing over $i$ from $2$ to $n$, we have
\begin{align}
0 \geq - (n-1)|\mathsf B(\mathsf x,\mathsf x)|^2 + \sum_{i=2}^n\big (2 |\mathsf B(\mathsf x,\mathsf e_i)|^2 + \langle \mathsf B(\mathsf x,\mathsf x),\mathsf B(\mathsf e_i,\mathsf e_i)\rangle\big ).\label{7.2}
\end{align}
Since $N$ is $\Phi$-SSU, we have, via \eqref{1.5}
\begin{equation}
\begin{aligned}
0 & >  \sum_{i=1}^n\big (4\langle \mathsf B(\mathsf x,\mathsf e_i),\mathsf B(\mathsf x,\mathsf e_i)\rangle-\langle \mathsf  B(\mathsf x,\mathsf x),\mathsf B(\mathsf e_i,\mathsf e_i)\rangle\big )\\
& =  3 |\mathsf B(\mathsf x,\mathsf x)|^2 + \sum_{i=2}^n\big (4 |\mathsf B(\mathsf x,\mathsf e_i)|^2 - \langle \mathsf  B(\mathsf x,\mathsf x),\mathsf B(\mathsf e_i,\mathsf e_i)\rangle\big ).\end{aligned}\label{7.3}\end{equation}
Adding both sides of \eqref{7.2} and \eqref{7.3}, we have
\begin{align}
&&0 > (4-n)|\mathsf B(\mathsf x,\mathsf x)|^2 + 6\sum_{i=2}^n |\mathsf B(\mathsf x,\mathsf e_i)|^2 \label{7.4},
\end{align}
so we have $n > 4$.
\end{proof}

\begin{proof}[Second Proof:]
If $n \le 4\, ,$ then $N$ is not an $4$-SSU manifold, and hence not a $\Phi$-SSU manifold by Theorem \ref{T: 6.1}.
This completes the proof by showing that if the dimension of a compact $\Phi$-SSU manifold $N$ is less than or equal to $4$, then $N$ is not a $\Phi$-SSU manifold.
\end{proof}

\begin{remark}
Theorem \ref{T:7.1} is sharp, as $S^n$ is not $\Phi$-SSU for $n \le 4$ and is $\Phi$-SSU for $n > 4\, .$
\end{remark}
\begin{thm}[Sphere Theorem]\label{T:7.2}
Every compact $\Phi$-$\text{SSU}$ manifold $N$ of dimension $n < 10$ is homeomorphic to an $n$-sphere.
\end{thm}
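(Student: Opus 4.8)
The plan is to prove that such an $N$ is a homotopy $n$-sphere and then invoke the generalized Poincar\'e conjecture. First I would record the two structural facts already at our disposal. By Theorem \ref{T:7.1} the dimension satisfies $n > 4$, so together with the hypothesis $n < 10$ we are confined to the range $5 \le n \le 9$. By Theorem \ref{T: 6.1}$(ii)$ the manifold $N$ is $4$-connected, i.e. $\pi_1(N) = \cdots = \pi_4(N) = 0$. In particular $N$ is simply connected, hence orientable (a simply connected space admits no nontrivial connected cover, so its orientation cover is trivial), and by the Hurewicz theorem $\tilde H_i(N;\mathbb Z) = 0$ for $1 \le i \le 4$.

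Next I would feed this into Poincar\'e duality. Since $N$ is a closed orientable $n$-manifold, $H_{n-i}(N) \cong H^{i}(N)$, and by the universal coefficient theorem $H^{i}(N)$ is determined by $H_i(N)$ and $H_{i-1}(N)$ (with $H_0$ free). The vanishing of $H_i(N)$ for $1 \le i \le 4$ therefore forces $H_{n-i}(N) = 0$ for $1 \le i \le 4$, that is, $H_{n-1}(N) = \cdots = H_{n-4}(N) = 0$. The two vanishing ranges $\{1,\dots,4\}$ and $\{n-4,\dots,n-1\}$ together cover the whole intermediate range $\{1,\dots,n-1\}$ precisely when the gap $\{5,\dots,n-5\}$ is empty, i.e. when $n \le 9$; this is exactly where the hypothesis $n < 10$ enters. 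Consequently $\tilde H_i(N) = 0$ for all $1 \le i \le n-1$ while $H_0(N) = H_n(N) = \mathbb Z$, so $N$ is an integral homology $n$-sphere.

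Then I would upgrade \emph{homology sphere} to \emph{homotopy sphere}. Because $N$ is simply connected with the reduced homology of $S^n$, the Hurewicz theorem gives $\pi_i(N) = 0$ for $i < n$ and $\pi_n(N) \cong H_n(N) \cong \mathbb Z$; choosing a generator yields a map $S^n \to N$ inducing an isomorphism on all homology groups, which by Whitehead's theorem (both spaces being simply connected CW complexes) is a homotopy equivalence. Hence $N$ is a homotopy $n$-sphere. Finally, since $5 \le n \le 9$, the topological generalized Poincar\'e conjecture in dimensions $\ge 5$ (Smale) applies and shows $N$ is homeomorphic to $S^n$.

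The one place demanding genuine care is the homology bookkeeping of the second step: one must verify that the duality-induced vanishing, combined with the $4$-connectivity, leaves no surviving intermediate homology class, and it is precisely the restriction $n < 10$ that guarantees the two ranges meet with no gap. The remaining ingredients—Hurewicz, Whitehead, and the high-dimensional Poincar\'e conjecture—are standard and require no new computation here; the substantive content has already been supplied by Theorems \ref{T: 6.1} and \ref{T:7.1}.
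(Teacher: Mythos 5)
Your proposal is correct and takes essentially the same route as the paper: $4$-connectivity from Theorem \ref{T: 6.1}, Hurewicz plus Poincar\'e duality to kill all intermediate homology (with $n<10$ ensuring the two vanishing ranges meet), the conclusion that $N$ is a homotopy $n$-sphere, and Smale's theorem applied in the range $n\ge 5$ guaranteed by Theorem \ref{T:7.1}. The extra details you supply --- orientability from simple-connectivity, the universal-coefficient bookkeeping, and the Hurewicz--Whitehead upgrade from homology sphere to homotopy sphere --- merely make explicit what the paper's proof leaves implicit.
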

\begin{proof}
In view of Theorem \ref{T: 6.1}, $N$ is 4-connected. By the Hurewicz isomorphism theorem, the 4-connectedness of $N$ implies homology groups $H_1(N)=\cdots=H_4(N)=0$. It follows from Proincare Duality Theorem and the Hurewicz Isomorphism Theorem (\cite{SP}) again, $H_{n-4}(N)=\cdots=H_{n-1}(N)=0$, $H_{n}(N)\neq 0\, , n < 10$ and $N$ is ($n-1$)-connected. Hence $N$ is a homotopy $n$-sphere, $n < 10$. Since $N$ is $\Phi$-SSU, $n\geq 5$ by Theorem 7.1. Consequently, a homotopy $n$-sphere $N$ for $n \ge 5$ is homeomorphic to an $n$-sphere by a Theorem of Smale (\cite{Sm}).
\end{proof}
\section{The identity map}

The identity map on every Riemannian manifold $M$ is $p$-harmonic for $1 < p < \infty\, $(cf. \cite [Proposition 4.1, p.652] {13}), and is $\Phi$-harmonic by Corollary \ref{C:2.3}.
Just as we study $p$-stability, $p$-index and $p$-nullity of the identity map, so do we explore $\Phi$-stability, $\Phi$-index and $\Phi$-nullity of the identity map. We discuss their parallelisms in this section, following the framework and ideas in \cite{13}.
 
The $E_{\Phi}$-Hessian of the identity map $\text {Id}$ on $\Gamma(\text {Id}^{-1}TM)$ is defined by
$$H^{E_{\Phi}}_{\text {Id}}(v,w)=\frac{\p^2E_{\Phi}(\text {Id}_{s,t})}{\p t \p s}_{\big |_{(s,t)=(0,0)}}.$$
Write the associated quadratic form
\begin{align} \vp_{_\Phi}(v)=H^{E_\Phi}_{\text {Id}}(v,v)\label{8.1}\end{align}
for short. It follows from Corollary \ref{C:2.4} that
\begin{equation}\label{8.2}
\begin{aligned}
& H^{E_{\Phi}}_{\text {Id}}(v,w)  =  \int _M \sum_{i,j=1}^m \langle \nabla  _{e_i} v , e_j \rangle \langle e_i, \nabla  _{e_j} w \rangle\, dx \\
& \quad + 2 \int_M  \sum_{i=1}^m \langle  \nabla _{e_i} v, \nabla _{e_i} w\rangle \, dx  - \int _M \langle \ric ^M (v), w \rangle\, dx.
\end{aligned}
\end{equation}

In view of \eqref{2.50} and the definition of curvature tensor $R$, we have 
\begin{equation}\label{8.3}
\begin{aligned}
&\div(\nabla_vv)-v(\div v)-\langle \ric ^M(v),v\rangle\\
&=\div(\sum_{i=1}^m\langle \nabla_vv,e_i\rangle e_i)-\sum_{i=1}^m\langle e_i,\nabla_v\nabla_{e_i}v\rangle-\sum_{i=1}^m\langle R(e_i,v)v,e_i\rangle\\
&=\div(\sum_{i=1}^m\langle \nabla_vv,e_i\rangle e_i)-\sum_{i,j=1}^m\langle v,e_j\rangle\langle e_i,\nabla_{e_j}\nabla_{e_i}v+R(e_i,e_j)v\rangle
\end{aligned}
\end{equation}
\[
\begin{aligned}
&=\sum_{i=1}^m e_i \langle e_i,\nabla_vv\rangle-\sum_{i,j=1}^m\langle v,e_j\rangle\langle e_i,\nabla_{e_i}\nabla_{e_j}v\rangle\\
&=\sum_{i,j=1}^m e_i (\langle v, e_j \rangle \langle  e_i,\nabla_{e_j}v\rangle)-\sum_{i,j=1}^m\langle v,e_j\rangle(e_i \langle e_i,\nabla_{e_j}v\rangle)\\
&=\sum_{i,j=1}^m ( e_i \langle v, e_j \rangle) \langle  e_i,\nabla_{e_j}v\rangle\\
&=\sum_{i,j=1}^m\langle \nabla_{e_i}v,e_j\rangle\langle e_i,\nabla_{e_j} v\rangle,
\end{aligned}
\]
Integrating \eqref{8.3} over $M$, and applying the Divergence Theorem we have
\begin{align}\int_M\sum_{i,j=1}^m\langle \nabla_{e_i}v,e_j\rangle\langle e_i,\nabla_{e_j} v\rangle \, dx=\int_M -v(\div v)-\langle \ric^M(v),v\rangle \ dx\, .\label{8.4}\end{align}

Since
$L_vg(X,Y)=\langle \n_X v,Y\rangle+\langle X,\n_Y v\rangle$,
where $g$ is the Riemannian metric on $M$ and $L_vg$ is the Lie derivative of
$g$ in the direction of $v\, ,$
\begin{equation}\label{8.5}
\begin{aligned}
\frac{1}{2}|L_vg|^2& = \frac 12 \sum_{i,j=1}^m (\langle\nabla_{e_i} v,e_j\rangle+\langle {e_i},\nabla_{e_j}v\rangle) (\langle\nabla_{e_i} v,e_j\rangle+\langle {e_i},\nabla_{e_j}v\rangle)\\
& = \sum_{i=1}^m \langle \nabla _{e_i} v , \nabla_{e_i} v\rangle+\sum_{i,j=1}^m\langle\nabla_{e_i} v, e_j\rangle\langle\nabla_{e_j} v, e_i\rangle.
\end{aligned}
\end{equation}

Following Eells-Lemaire (\cite {20}), let $\flat$ denote
the bundle isomorphism from $T(M)$ to $T^*(M)$ and $\sharp$ denote its inverse,
 where the metric on $M$ is used. We define the differential $\ov d$,
codifferential $\ov d^*$  and  Laplacian $-\ov \D$ of $v\in \Gamma(T(M))$ as
\begin{eqnarray}\ov d v=(dv^{\flat})^{\sharp},\quad \ov d^*v=d^*v^{\flat},\quad
-\ov \D v=(-\D v^{\flat})^{\sharp},\label{8.6}\end{eqnarray}
where $-\D$ denotes the de Rham-Hodge operator on 1-forms.

\noindent
Then
\begin{equation}
\begin{aligned}v(\div v) & = v \sum _{i=1}^m \langle \n _{e_i} v, e_i \rangle = v \sum _{i=1}^m (\n _{e_i} v^{\flat})(e_i) = v(- d^{\ast} v^{\flat})\\
& = \big (d (- d^{\ast} v^{\flat})\big )(v) = - \langle d d^{\ast} v^{\flat}, v^{\flat}\rangle\quad \text{which}\quad \text{implies} \\
\int _M v(\div v)\, dx  & = - \int _M \langle d^{\ast} v^{\flat}, d^{\ast} v^{\flat}\rangle\, dx  = - \int _M \langle \ov d^*v, \ov d^*v\rangle\, dx.
\end{aligned}\label{8.7}
\end{equation}

\noindent
\begin{theorem}
\begin{eqnarray}
\qquad \vp_{_\Phi}(v)&=& \int_M  \big (2 \sum_{i=1}^m \langle  \nabla _{e_i} v, \nabla _{e_i} v\rangle \big ) - v (\div v) - 2  \langle \ric^M (v), v \rangle\, dx \label{8.8}\\
&=& \int_M |L_vg|^2+v(\div v)\ dx\, \label{8.9}\\
&=&\int_M  \big (2 \sum_{i=1}^m \langle  \nabla _{e_i} v, \nabla _{e_i} v\rangle \big ) + \langle \ov d^{\ast} v, \ov d^{\ast} v\rangle - 2  \langle \ric ^M(v), v \rangle\, dx \label{8.10}\\
&=&\int_M -2 \langle\overline{\triangle}v,v\rangle-v(\div v)-4\langle \ric ^M(v), v \rangle\, dx.\label{8.11}
\end{eqnarray}
\end{theorem}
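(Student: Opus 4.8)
The plan is to obtain all four identities \eqref{8.8}--\eqref{8.11} by specializing the Hessian formula \eqref{8.2} to $w=v$ and then repeatedly re-expressing the three resulting terms by means of the auxiliary identities \eqref{8.4}, \eqref{8.5}, \eqref{8.7} and the Weitzenb\"ock formula. Setting $w=v$ in \eqref{8.2} and recalling \eqref{8.1}, I first record the ``raw'' expression $\vp_{_\Phi}(v)=\int_M \big(\sum_{i,j=1}^m\langle\nabla_{e_i}v,e_j\rangle\langle e_i,\nabla_{e_j}v\rangle + 2\sum_{i=1}^m\langle\nabla_{e_i}v,\nabla_{e_i}v\rangle - \langle\ric^M(v),v\rangle\big)\,dx$, which serves as the common starting point. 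I abbreviate the cross term $T:=\sum_{i,j}\langle\nabla_{e_i}v,e_j\rangle\langle e_i,\nabla_{e_j}v\rangle$, noting it is exactly the integrand whose integral is evaluated in \eqref{8.4}.

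For \eqref{8.8} I would substitute \eqref{8.4}, which replaces $\int_M T\,dx$ by $\int_M\big(-v(\div v)-\langle\ric^M(v),v\rangle\big)\,dx$; collecting with the two remaining terms yields \eqref{8.8} at once. For \eqref{8.9} I start from \eqref{8.8} and trade the Ricci term back using \eqref{8.4} in the integral form $-2\langle\ric^M(v),v\rangle = 2T+2v(\div v)$, so that $\vp_{_\Phi}(v)=\int_M\big(2\sum_i\langle\nabla_{e_i}v,\nabla_{e_i}v\rangle + 2T + v(\div v)\big)\,dx$; the first two summands assemble into $|L_vg|^2$ by \eqref{8.5} (using $\langle\nabla_{e_j}v,e_i\rangle=\langle e_i,\nabla_{e_j}v\rangle$ to match the cross term), giving \eqref{8.9}.

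For \eqref{8.10} I again begin with \eqref{8.8} and invoke \eqref{8.7}, which identifies $-\int_M v(\div v)\,dx$ with $\int_M\langle\ov d^{*}v,\ov d^{*}v\rangle\,dx$; substitution produces \eqref{8.10}. The last identity \eqref{8.11} is the only one requiring new analytic input: using the musical isomorphisms \eqref{8.6} and the Weitzenb\"ock formula for $1$-forms (Eells--Lemaire, as already cited for \eqref{3.24}), I would write $-\ov\D v=\ov\nabla^{*}\ov\nabla v+\ric^M(v)$, where $\ov\nabla^{*}\ov\nabla$ is the rough Laplacian on vector fields. Pairing with $v$, integrating, and integrating by parts to identify $\int_M\langle\ov\nabla^{*}\ov\nabla v,v\rangle\,dx=\int_M\sum_i\langle\nabla_{e_i}v,\nabla_{e_i}v\rangle\,dx$ yields $\int_M\sum_i\langle\nabla_{e_i}v,\nabla_{e_i}v\rangle\,dx=\int_M\big(\langle-\ov\D v,v\rangle-\langle\ric^M(v),v\rangle\big)\,dx$; substituting this into \eqref{8.8} and simplifying gives \eqref{8.11}.

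The hard part will be \eqref{8.11}: I must match sign conventions carefully among the de Rham--Hodge operator $-\D$, its transported version $\ov\D$ in \eqref{8.6}, and the Ricci term in the Weitzenb\"ock formula, and then justify the integration-by-parts identification of the rough Laplacian pairing with the Dirichlet-type integral $\int_M\sum_i|\nabla_{e_i}v|^2\,dx$ on the closed manifold $M$. The first three identities are purely algebraic rearrangements of \eqref{8.2} using \eqref{8.4}, \eqref{8.5}, and \eqref{8.7}, and should present no difficulty.
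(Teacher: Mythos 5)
Your proposal is correct and follows essentially the same route as the paper: setting $w=v$ in \eqref{8.2} and using \eqref{8.4} for \eqref{8.8}, trading the Ricci term back via \eqref{8.4} and assembling $|L_vg|^2$ via \eqref{8.5} for \eqref{8.9}, substituting \eqref{8.7} for \eqref{8.10}, and combining the Weitzenb\"ock formula for $1$-forms with integration by parts for \eqref{8.11}. Your rough-Laplacian formulation $-\ov\D v=\ov\nabla^{*}\ov\nabla v+\ric^M(v)$ together with $\int_M\langle\ov\nabla^{*}\ov\nabla v,v\rangle\,dx=\int_M\sum_i\langle\nabla_{e_i}v,\nabla_{e_i}v\rangle\,dx$ is exactly the paper's pair \eqref{8.12}--\eqref{8.13}.
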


\begin{proof}
Combining \e{8.1},\e{8.2} and \e{8.4}, we obtain \e{8.8}. \e{8.9} follows from \e{8.8}, \e{8.5} and \e{8.4}. Substituting \e{8.7} into \e{8.8}, we have \e{8.10}.
By using the Weitzenb\"{o}ck formula for the Laplacian on 1-forms, we have
\begin{equation}
-\overline{\triangle}v=-\sum_{i=1}^m \nabla _{e_i} \nabla _{e_i} v +\text{Ric}^M(v).\label{8.12}
\end{equation}
\begin{align}\int_M  2 \sum_{i=1}^m \langle  \nabla _{e_i} v, \nabla _{e_i} v\rangle \, dx = \int_M -2 \langle\sum_{i=1}^m \nabla _{e_i} \nabla _{e_i} v,v\rangle \, dx .\label{8.13}
\end{align}
Substituting \e{8.13} into \e{8.8} and applying \e{8.12}, we have \e{8.11}.
\end{proof}

Since the eigenspace of $-\triangle$ is finite dimensional, based on Proposition \ref{P:8.1}, if $M$ is an Einstein manifold, we identify the following:
\begin{defn}
The $\Phi$-index of $\rm {Id}:M\rightarrow M\, ,$ denoted by $\Phi$-$\text{index} (\text {Id}_M)$ is the dimension of the largest subspace of $\Gamma(\text {Id}^{-1}TM)$ on which $H^{\Phi}_{\text {Id}}$ is negative-definite
and $\Phi$-nullity of $\text {Id}:M\rightarrow M\, ,$ denoted by $\Phi$-$\text{nullity} (\text {Id}_M)$
is the dimension of the subspace of $\Gamma(\text {Id}^{-1}TM)$ formed by the elements $v$ such that $H^{E_{\Phi}}_{\text {Id}}(v,\omega)=0$ for all $\omega\in \Gamma(\text {Id}^{-1}TM)$.
The identity map $\text {Id}$ is said to be $\Phi$-stable
if $\Phi$-index of $\text {Id}$ is zero.
\end{defn}

We prove that the identity map on any compact manifold $M$ with $\operatorname{Ric}^M\leq 0$ is $p$-stable and $p$-$\operatorname{nullity} (\operatorname{Id}_M) \leq m$ for $p > 1\, .$ (cf. \cite {13} Theorem 5.14, p.656). In parallel, 

\begin{theorem}
Let $M$ be a compact manifold with $\operatorname{Ric}^M\leq 0$. Then $\operatorname{Id}$ is $\Phi$-stable and $\Phi$-$\operatorname{nullity} (\operatorname{Id}_M) \leq m$.
\end{theorem}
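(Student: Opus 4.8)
The plan is to read off both conclusions directly from the Bochner-type expression \e{8.10} for the Hessian quadratic form, which is the most convenient of the four equivalent formulas proved above. First I would recall that
$$\vp_{_\Phi}(v)=\int_M \Big(2\sum_{i=1}^m\langle \n_{e_i}v,\n_{e_i}v\rangle\Big)+\langle \ov d^{\ast}v,\ov d^{\ast}v\rangle-2\langle \ric^M(v),v\rangle\,dx,$$
and observe that under the hypothesis $\ric^M\le 0$ each of the three integrands is pointwise nonnegative: the first two are squares, and $-2\langle \ric^M(v),v\rangle\ge 0$ precisely because $\ric^M\le 0$. Hence $\vp_{_\Phi}(v)\ge 0$ for every $v\in\Gamma(\operatorname{Id}^{-1}TM)=\Gamma(TM)$, so the symmetric bilinear form $H^{E_{\Phi}}_{\operatorname{Id}}$ is positive semidefinite. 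By definition this means there is no subspace of positive dimension on which $H^{E_{\Phi}}_{\operatorname{Id}}$ is negative-definite, i.e. the $\Phi$-index of $\operatorname{Id}$ is zero, so $\operatorname{Id}$ is $\Phi$-stable.

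For the nullity bound I would use that a positive semidefinite symmetric bilinear form satisfies the Cauchy--Schwarz inequality, so its null space coincides with the zero set of the associated quadratic form; thus the $\Phi$-nullity space is exactly $\{v:\vp_{_\Phi}(v)=0\}$. Since $\vp_{_\Phi}(v)$ is the integral of a sum of nonnegative smooth terms, the equation $\vp_{_\Phi}(v)=0$ forces each term to vanish identically; in particular $\sum_{i=1}^m|\n_{e_i}v|^2\equiv 0$, so $\n v\equiv 0$ and $v$ is parallel. On a connected compact manifold a parallel vector field is determined by its value at any single point, since the evaluation map $v\mapsto v(y_0)\in T_{y_0}M$ is injective on parallel fields; hence the space of parallel fields, and a fortiori the $\Phi$-nullity space, has dimension at most $m=\dim M$. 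This yields $\Phi\text{-}\operatorname{nullity}(\operatorname{Id}_M)\le m$.

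The computational heart is already supplied by \e{8.10}, so the remaining points are conceptual rather than analytic. The first is justifying that the nullity space equals the null cone of the quadratic form, which needs $H^{E_{\Phi}}_{\operatorname{Id}}$ to be genuinely semidefinite and not merely nonnegative on the diagonal; this is exactly what \e{8.10} under $\ric^M\le 0$ provides. The second is the passage from $\vp_{_\Phi}(v)=0$ to $\n v\equiv 0$: one must note that a parallel $v$ automatically satisfies $\langle \ric^M(v),v\rangle=0$ (since $R(\cdot,\cdot)v=0$ for parallel $v$), so the three vanishing conditions are mutually consistent and impose no hidden obstruction. I expect the only real subtlety, if any, to be the bookkeeping of connectedness in the final dimension count, rather than any substantive difficulty.
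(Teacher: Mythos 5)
Your proposal is correct and follows essentially the same route as the paper: the paper's own (very terse) proof also reads stability and the nullity bound directly off formula \eqref{8.10}, concluding that $\varphi_{_\Phi}(v)=0$ forces $v$ to be parallel and hence that the nullity space has dimension at most $m$. Your additional details --- the Cauchy--Schwarz argument identifying the nullity space with the null cone of the quadratic form, and the injectivity of evaluation on parallel fields --- are exactly the steps the paper leaves implicit.
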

\begin{proof}
The stability follows immediately from \e{8.10}. If $\text{Ric}^M\leq 0$ and $I^\Phi_{\operatorname{Id}}(v,v)=0$, then by \e{8.10} $v$ is parallel. This completes the proof.
\end{proof}

\begin{theorem}
The identity map on very compact manifold $M$ which supports a nonisometric, conformal vector field $v$ is $\Phi$-unstable for $m>4$, and is $p$-unstable for $1 < p < m\, .$ 
\end{theorem}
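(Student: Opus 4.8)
The plan is to use the given nonisometric conformal field $v$ itself as the destabilizing variation and to show directly that the $E_\Phi$-Hessian (resp. the $p$-energy Hessian) of $\operatorname{Id}_M$ is negative in this direction. The input from the conformal hypothesis is purely algebraic: since $v$ is conformal, $L_vg=\frac{2}{m}(\div v)\,g$, where the factor is fixed by taking the trace, $\operatorname{tr}(L_vg)=2\div v$, against $\operatorname{tr}\big(\frac{2}{m}(\div v)g\big)=2\div v$; and ``nonisometric'' means $v$ is not Killing, i.e. $L_vg\not\equiv 0$, equivalently $\div v\not\equiv 0$. This yields the two identities I will plug in everywhere: $|L_vg|^2=\frac{4}{m}(\div v)^2$, and, by the Divergence Theorem (integrate $\div\big((\div v)v\big)$ and use $\div(fv)=v(f)+f\div v$), $\int_M v(\div v)\,dx=-\int_M(\div v)^2\,dx$.

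For the $\Phi$-statement I would start from the already-proved formula \eqref{8.9}, $\vp_{_\Phi}(v)=\int_M |L_vg|^2+v(\div v)\,dx$. Substituting the two conformal identities gives
\[
\vp_{_\Phi}(v)=\frac{4}{m}\int_M(\div v)^2\,dx-\int_M(\div v)^2\,dx=\frac{4-m}{m}\int_M(\div v)^2\,dx.
\]
Because $\div v\not\equiv 0$ the integral is strictly positive, so $\vp_{_\Phi}(v)=H^{E_\Phi}_{\operatorname{Id}}(v,v)<0$ exactly when $m>4$; this exhibits a direction of negative $E_\Phi$-Hessian and hence shows $\operatorname{Id}_M$ is $\Phi$-unstable for $m>4$.

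For the $p$-statement I would feed the same $v$ into the second variation formula for the $p$-energy of the ($p$-harmonic) identity map from \cite{13}. Its curvature term combines with the Dirichlet term through precisely the manipulations in \eqref{8.4} and \eqref{8.5}, giving $\int_M\big(|\nabla v|^2-\langle\ric^M(v),v\rangle\big)\,dx=\frac12\int_M|L_vg|^2\,dx-\int_M(\div v)^2\,dx$, which for a conformal field equals $\frac{2-m}{m}\int_M(\div v)^2\,dx$; meanwhile the anisotropic term $(p-2)|du|^{p-4}\big(\sum_i\langle du(e_i),\nabla_{e_i}v\rangle\big)^2$ collapses via $|d\operatorname{Id}|^2=m$ and $\sum_i\langle e_i,\nabla_{e_i}v\rangle=\div v$ to $(p-2)m^{\frac{p-4}{2}}(\div v)^2$. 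Factoring $m^{\frac{p-4}{2}}\int_M(\div v)^2\,dx$ out of the sum leaves $H^{E_p}_{\operatorname{Id}}(v,v)=m^{\frac{p-4}{2}}(p-m)\int_M(\div v)^2\,dx$, which is negative for $1<p<m$, giving $p$-instability. The one genuine obstacle is the bookkeeping of the exact $p$-energy second variation formula for the identity map (the constants and the placement of the factors $|du|^{p-2}$ and $|du|^{p-4}$); once that is recorded, the conformal reduction is identical to the one carried out for $\Phi$. As a consistency check, setting $p=4$ in the coefficient $(p-m)$ reproduces the $\Phi$-threshold $m>4$, as expected.
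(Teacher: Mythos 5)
Your proposal is correct. For the $\Phi$-statement it is essentially the paper's own proof: both arguments substitute the conformal identity $|L_vg|^2=\frac{4}{m}(\div v)^2$ and the integral identity $\int_M v(\div v)\,dx=-\int_M(\div v)^2\,dx$ into \eqref{8.9} to obtain $\vp_{_\Phi}(v)=\frac{4-m}{m}\int_M(\div v)^2\,dx$, which is strictly negative for $m>4$ since nonisometric forces $\div v\not\equiv 0$. The only cosmetic differences are that you derive the integral identity by integrating $\div\big((\div v)v\big)$ directly, while the paper routes it through the Hodge-theoretic identity \eqref{8.7}, and that your sign $L_vg=+\frac{2}{m}(\div v)g$ is the one consistent with the paper's convention \eqref{8.5} (the paper's minus sign is a harmless typo, since only $|L_vg|^2$ enters). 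Where you genuinely diverge is the $p$-statement: the paper disposes of it by citing \cite{13}, whereas you recompute it from the second variation formula of the $p$-energy at the ($p$-harmonic) identity map, reducing the Dirichlet-plus-curvature block to $\frac{2-m}{m}\int_M(\div v)^2\,dx$ via the same identities \eqref{8.4} and \eqref{8.5}, and collapsing the anisotropic term to $(p-2)m^{(p-4)/2}\int_M(\div v)^2\,dx$, which sums to $m^{(p-4)/2}(p-m)\int_M(\div v)^2\,dx<0$ for $1<p<m$. That computation is correct (and valid also for $1<p<2$, since $|d\,\operatorname{Id}|=\sqrt{m}>0$ keeps the $p$-energy smooth near the identity), so your argument is self-contained where the paper's is not; what the citation buys the paper is brevity, and what your computation buys is a uniform treatment in which setting $p=4$ visibly reproduces the $\Phi$-threshold $m>4$.
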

\begin{proof}
Since the vector field $v$ on $M$ is conformal if and only if $L_vg=-\frac{2}{m}(\div v)g$, $|L_vg|^2=\frac{4}{m}(\div v)^2$ (cf. e.g. \cite {13}). It follows from the equation \e{8.7} that
$\int _M v(\div v) \, dx = -  \int _M \langle d^{\ast} v^{\flat}, d^{\ast} v^{\flat}\rangle\, dx = - \int _M (\div v)^2\, dx  $. Substituting this into \e{8.9}, we have
\begin{eqnarray}
\vp_{\Phi}(v)=\int_M\frac{4-m}{m}(\div v)^2\, dx\leq 0.\nonumber
\end{eqnarray}
If $v$ is nonisometric conformal, we have $\div v\neq0$ and $H^\Phi_{\text {Id}}(v,v)<0$. The last assertion for $1 < p < m$ is a result in \cite [p.656]{13} .
\end{proof}
\begin{cor}\label{C:8.1}
Every compact homogenous space of dimension $m>4$ can be given a metric for which $\text {Id}_M$ is $\Phi$-unstable.
\end{cor}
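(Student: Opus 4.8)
The plan is to reduce the statement to the preceding theorem: I want to equip the given compact homogeneous space $M$ (with $m=\dim M>4$) with a metric that supports a nonisometric conformal vector field, for then that theorem yields $\vp_{_\Phi}(v)<0$ for such a field $v$, i.e. $\operatorname{Id}_M$ is $\Phi$-unstable. So the entire task is the construction of one metric on $M$ carrying a nonisometric conformal field, and the conformal-change trick is the natural device.

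First I would use homogeneity to manufacture Killing fields. Fix a homogeneous metric $g$ on $M$; since $M$ is compact its isometry group $G$ is a compact Lie group, and by assumption $G$ acts transitively. Each element of the Lie algebra $\mathfrak g=\operatorname{Lie}(G)$ generates a Killing field of $(M,g)$, and transitivity makes the evaluation $\mathfrak g\to T_pM$ surjective at every $p$ (it is the differential at the identity of the submersion $G\to M$). As $m>0$, there is a Killing field $X$ with $X_p\neq0$ at some $p$. I would then conformally deform $g$: choose $\phi\in C^\infty(M)$ with $X(\phi)\not\equiv0$ — possible since $X_p\neq0$, so any $\phi$ with $d\phi_p(X_p)\neq0$ works — and set $\tilde g=e^{2\phi}g$.

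The key computation is that $X$, while Killing for $g$, is conformal but not Killing for $\tilde g$. Indeed, using $L_Xg=0$,
\begin{equation*}
L_X\tilde g=L_X\!\left(e^{2\phi}g\right)=2\,X(\phi)\,e^{2\phi}g+e^{2\phi}L_Xg=2\,X(\phi)\,\tilde g,
\end{equation*}
which is a pointwise multiple of $\tilde g$, so $X$ is a conformal vector field of $(M,\tilde g)$; it is nonisometric because the factor $2X(\phi)$ is not identically zero. Applying the preceding theorem to $(M,\tilde g)$ with the field $X$ gives $\vp_{_\Phi}(X)<0$ for $m>4$, hence $\operatorname{Id}_M$ is $\Phi$-unstable for the metric $\tilde g$, which proves the corollary.

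The hard part will be only the first step: confirming that homogeneity genuinely produces a nonzero Killing field and that $\phi$ can be arranged with $X(\phi)\not\equiv0$; the Lie-derivative identity and the appeal to the preceding theorem are routine. The dimension restriction $m>4$ is not used in the construction at all — it enters solely through that theorem, where it forces the coefficient $\tfrac{4-m}{m}$ appearing in $\vp_{_\Phi}$ to be negative.
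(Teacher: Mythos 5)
Your proof is correct and follows essentially the same route as the paper: homogeneity supplies a Killing field (a one-parameter group of isometries), a conformal change of metric turns it into a nonisometric conformal field, and the preceding theorem then gives $\Phi$-instability of $\operatorname{Id}_M$ for $m>4$. The only difference is that the paper outsources the metric deformation to a citation (Kobayashi--Nomizu, p.~310), whereas you carry out the computation $L_X(e^{2\phi}g)=2X(\phi)\,e^{2\phi}g$ explicitly.
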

\begin{proof}
Given a one-parameter group of isometries $\{u_t\}$ for the metric on $M$, one can construct a new metric under which $\{u_t\}$ is only conformal (cf. \cite {KoN}, p. 310).
\end{proof}
\begin{cor}\label{C:8.2}
Every compact manifold $M$ which possesses at least one parameter group of isometries admits a metric for which $\text {Id}:M\rightarrow M$ is $\Phi$-unstable for $m>4$.
\end{cor}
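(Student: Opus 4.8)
The plan is to reduce Corollary~\ref{C:8.2} to the preceding theorem on nonisometric conformal vector fields, by the same metric deformation used to prove Corollary~\ref{C:8.1}; the only difference is that the hypothesis of homogeneity is weakened to the existence of a single one-parameter group of isometries. First I would let $\{u_t\}$ be the given one-parameter group of isometries of $(M,g)$ and let $v$ denote its infinitesimal generator. Then $v$ is a Killing field, so $L_v g = 0$ and in particular $\div v \equiv 0$ with respect to $g$; on its own, $g$ therefore yields no instability from \eqref{8.9}.

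Next I would deform $g$ within its conformal class. Following the construction of Kobayashi and Nomizu (\cite{KoN}, p.~310) already invoked in the proof of Corollary~\ref{C:8.1}, one produces a new metric $\tilde g$ on $M$ under which $\{u_t\}$ acts only conformally and no longer isometrically. With respect to $\tilde g$ the field $v$ becomes a nonisometric conformal vector field, so $L_v \tilde g = -\tfrac{2}{m}(\div v)\,\tilde g$ with $\div v \not\equiv 0$, where $\div$ is now taken relative to $\tilde g$. Since this is merely a conformal change of metric on the fixed compact manifold $M$, compactness is preserved and $\dim M = m > 4$ is unchanged.

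Finally I would apply the preceding theorem to $(M,\tilde g)$: because $(M,\tilde g)$ is a compact manifold of dimension $m>4$ carrying a nonisometric conformal vector field $v$, formula \eqref{8.9} gives
\[
\vp_{_\Phi}(v)=\int_M \frac{4-m}{m}(\div v)^2\, dx < 0,
\]
so $H^{\Phi}_{\operatorname{Id}}(v,v)<0$ and the identity map $\operatorname{Id}:(M,\tilde g)\to(M,\tilde g)$ is $\Phi$-unstable. This is exactly the assertion of the corollary.

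I expect the only substantive point to be verifying that the Kobayashi--Nomizu deformation genuinely destroys the isometric character of $v$ while keeping it conformal, i.e. that $\tilde g$ can be chosen so that $\div v \not\equiv 0$. This is precisely the content of the cited construction on p.~310 of \cite{KoN}; once it is granted, the conclusion is immediate from \eqref{8.9} and the preceding theorem, and nothing further needs to be computed.
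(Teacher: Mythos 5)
Your proposal is correct and follows exactly the paper's intended argument: the paper proves Corollary \ref{C:8.2} implicitly by the same device as Corollary \ref{C:8.1}, namely the Kobayashi--Nomizu conformal deformation (\cite{KoN}, p.~310) turning the given one-parameter group of isometries into a nonisometric conformal group, after which the preceding theorem and formula \eqref{8.9} give $\vp_{_\Phi}(v)<0$ for $m>4$. Nothing is missing; your identification of the one substantive point (that the deformed metric renders $\div v\not\equiv 0$) is precisely what the citation supplies.
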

\begin{rem} Analogous to Corollaries \ref{C:8.1} and \ref{C:8.2}, every compact homogenous space of dimension $m>p$ can be given a metric for which $\text {Id}_M$ is $p$-unstable, and every compact manifold $M$ which possesses at least one parameter group of isometries admits a metric for which $\text {Id}:M\rightarrow M$ is $p$-unstable for $p<m$ (cf. \cite [5.11 and 5.12, p. 653] {13}).
\end{rem}
The scalar curvature of $M$, denoted by $\text{Scal}^M$, is the trace of the Ricci
curvature operator on M.

\begin{prop}\label{P:8.1}
Let $M$ be a compact manifold with $\langle \text{Ric}^M (v), v \rangle _M =\frac {1}{m}\text{Scal}^M\, ,$ for every unit vector $v$ at every point of $M\, .$ Then $\text {Id}_M$ is $\Phi$-unstable if and only if $\lambda_1<\frac{4}{3m}\text{Scal} ^{\, M}$.
\end{prop}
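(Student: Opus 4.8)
The plan is to reduce the quadratic form $\vp_{_\Phi}$ to a single Weitzenb\"ock-type expression and then diagonalize it via the Hodge decomposition of $v^\flat$. First I observe that the hypothesis $\langle \ric^M(v),v\rangle_M=\frac1m\text{Scal}^M$ for every unit $v$ says precisely that $M$ is Einstein, $\ric^M=\frac1m\text{Scal}^M\,\mathrm{Id}$, whence (contracted Bianchi, $m\ge 3$) $\text{Scal}^M$ is constant. Starting from \eqref{8.11}, I substitute the Hodge identity $\int_M\langle-\ov{\triangle}v,v\rangle\,dx=\int_M\big(|\ov d v|^2+|\ov d^{\ast}v|^2\big)\,dx$ (legitimate since $-\ov{\triangle}$ corresponds to $dd^{\ast}+d^{\ast}d$) together with $\int_M v(\div v)\,dx=-\int_M|\ov d^{\ast}v|^2\,dx$ from \eqref{8.7} and the Einstein identity, to arrive at the clean formula
\[ \vp_{_\Phi}(v)=\int_M\Big(2|\ov d v|^2+3|\ov d^{\ast}v|^2-\tfrac{4}{m}\text{Scal}^M\,|v|^2\Big)\,dx. \]
This one identity drives both implications.

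For the \emph{if} direction I would test with a gradient field. Let $f$ be a first eigenfunction normalized so that $\lambda_1>0$ and $d^{\ast}df=\lambda_1 f$, and set $v=(df)^\sharp$. Then $\ov d v=0$, $\ov d^{\ast}v=d^{\ast}df=\lambda_1 f$, and $\int_M|v|^2\,dx=\int_M|df|^2\,dx=\lambda_1\int_M f^2\,dx$, so
\[ \vp_{_\Phi}(v)=\Big(3\lambda_1^2-\tfrac{4}{m}\text{Scal}^M\lambda_1\Big)\int_M f^2\,dx=3\lambda_1\Big(\lambda_1-\tfrac{4}{3m}\text{Scal}^M\Big)\int_M f^2\,dx, \]
which is negative exactly when $\lambda_1<\frac{4}{3m}\text{Scal}^M$; hence $\mathrm{Id}_M$ is $\Phi$-unstable in that range.

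For the converse I would prove the contrapositive: assuming $\lambda_1\ge\frac{4}{3m}\text{Scal}^M$, show $\vp_{_\Phi}\ge 0$. Decompose $v^\flat=df+d^{\ast}\beta+h$ into exact, coexact and harmonic parts. Since $\ov d$ annihilates the exact and harmonic parts, $\ov d^{\ast}$ the coexact and harmonic parts, and the three summands are $L^2$-orthogonal, $\vp_{_\Phi}$ splits as a sum of three contributions. On the exact part, expanding in eigenfunctions, each term is $\lambda\big(3\lambda-\frac{4}{m}\text{Scal}^M\big)\int f^2\,dx\ge 0$ because every such $\lambda\ge\lambda_1\ge\frac{4}{3m}\text{Scal}^M$. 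On the harmonic part the contribution is $-\frac{4}{m}\text{Scal}^M|h|^2$, which is $\ge 0$ when $\text{Scal}^M\le 0$, while if $\text{Scal}^M>0$ then $\ric^M>0$ forces $h=0$ by the Bochner theorem; either way it is nonnegative.

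The \textbf{main obstacle}, which I would isolate as a lemma, is the coexact part: each coexact eigenform $\alpha$ (so $d^{\ast}\alpha=0$, $\Delta_H\alpha=\nu\alpha$) contributes $\big(2\nu-\frac{4}{m}\text{Scal}^M\big)\int_M|\alpha|^2\,dx$, so I must bound the bottom of the coexact spectrum by $\nu\ge\frac{2}{m}\text{Scal}^M$. I would combine three facts: $\int_M|\ov d\alpha|^2\,dx=\nu\int_M|\alpha|^2\,dx$ (from $d^{\ast}\alpha=0$); the Einstein--Weitzenb\"ock identity \eqref{8.12}, giving $\int_M|\nabla\alpha|^2\,dx=\big(\nu-\frac{1}{m}\text{Scal}^M\big)\int_M|\alpha|^2\,dx$; and the pointwise Kato-type inequality $|d\alpha|^2\le 2|\nabla\alpha|^2$ (since $d\alpha$ is twice the skew part of $\nabla\alpha$). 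Together these yield $\nu\le 2\big(\nu-\frac{1}{m}\text{Scal}^M\big)$, i.e. $\nu\ge\frac{2}{m}\text{Scal}^M$, so the coexact contribution is $\ge 0$ unconditionally. Assembling the three pieces gives $\vp_{_\Phi}\ge 0$ and hence $\Phi$-stability, completing the converse; this argument also explains why the instability threshold is detected purely by exact (gradient) fields, and thus by $\lambda_1$ alone.
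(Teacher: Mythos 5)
Your proof is correct, and its first half coincides with the paper's: both test $\vp_{_\Phi}$ on $v=(df)^\sharp$ for a first eigenfunction $f$, using \eqref{8.11} together with \eqref{8.7} (your ``clean formula'' is a correct rearrangement of these) to get $\vp_{_\Phi}(v)=3\int_M\big(\lambda_1-\tfrac{4}{3m}\text{Scal}^M\big)|v|^2\,dx$. The converse is where you genuinely diverge. The paper also splits off the divergence-free part $\sigma$ of the Hodge decomposition $v^\flat=df+\sigma$, $d^*\sigma=0$, but it disposes of $\sigma$ in one stroke: by \eqref{8.9} and \eqref{8.7}, $\div(\sigma^\sharp)=0$ gives $\vp_{_\Phi}(\sigma^\sharp)=\int_M|L_{\sigma^\sharp}g|^2\,dx\ge 0$, a manifest square needing no curvature or spectral input, which contradicts the eigenvalue computation $\vp_{_\Phi}(\sigma^\sharp)=2\int_M\big(\lambda-\tfrac{2}{m}\text{Scal}^M\big)|\sigma|^2\,dx<0$ unless $\sigma=0$; this covers the coexact and harmonic pieces simultaneously (at $\lambda=0$ it even reproves Bochner vanishing for Einstein manifolds with positive scalar curvature). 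You instead prove the coexact spectral bound $\nu\ge\tfrac{2}{m}\text{Scal}^M$ directly from the Kato-type inequality $|d\alpha|^2\le 2|\nabla\alpha|^2$ plus the Weitzenb\"ock formula \eqref{8.12}, and then handle the harmonic piece by a separate appeal to Bochner's theorem. The two mechanisms are in fact the same inequality in disguise: pointwise, with $\alpha=v^\flat$, one has $2|\nabla v|^2-|\ov d v|^2=\tfrac12|L_vg|^2$ by \eqref{8.5}, so your Kato inequality is exactly the positivity the paper exploits through \eqref{8.9}. What your route buys is a transparent spectral picture -- it isolates the thresholds ($\tfrac{4}{3m}\text{Scal}^M$ on exact fields, $\tfrac{2}{m}\text{Scal}^M$ on coexact ones, unconditional nonnegativity elsewhere) and thereby explains why only $\lambda_1$ can detect instability, and your contrapositive formulation (nonnegativity of $\vp_{_\Phi}$ when $\lambda_1\ge\tfrac{4}{3m}\text{Scal}^M$) is logically tidier than the paper's write-up, which interleaves the two directions; what it costs is the extra input of Bochner's theorem and somewhat more machinery than the paper's single Lie-derivative identity. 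Both arguments share the same tacitly assumed (and standard) fact that $\vp_{_\Phi}$ diagonalizes over the Hodge/eigenspace decomposition, so this is not a gap peculiar to your version.
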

In contrast, the identity map on a compact Einstein manifold $M$ is $p$-unstable if and only if  $\lambda_1 < \frac {2}{m+p-2} \text{Scal} ^{\, M}$ (cf. \cite [Theorem 5.1, p.654] {13}).
\begin{proof}
Proceed as in \cite {13}, if $(f,\lambda)$ is an eigenpair with $\lambda<\frac{4}{3m}\text{Scal} ^{\, M}$, then by \eqref{8.6}, $v=(df)^\sharp$ is a vector field satisfying $$-\overline{\triangle}v=\bigg (-\D \big ((df)^\sharp\big )^\flat\bigg )^\sharp=(d(-\D )f)^\sharp=\lambda v$$ and via \e{8.7}, we have
\begin{align} v(\div v) = \langle \D df, df\rangle = - \langle \lambda  df, df\rangle = - \lambda \langle df, df \rangle = - \lambda \langle (df)^\sharp, (df)^\sharp \rangle. \label{8.17}\end{align}
Substituting \e{8.17} into \e{8.11} yields
\begin{equation}
\vp_{\Phi}\big ((df)^\sharp\big )=3\int_M(\lambda-\frac{4}{3m}\text{Scal}^{\, M}) |(df)^\sharp|^2\, dx < 0,\label{8.18}
\end{equation}
that is, $\text {Id}:M\rightarrow M$ is $\Phi$-unstable.

\noindent
Conversely, if $v\in \Gamma(TM)$ satisfies $-\ov\D v=\la v$ with
$\la<\frac{4}{3m}\text{Scal}^{\, M}$, then its Hodge decomposition $v^\flat=df+\sa$ with
$d^*\sa=0$ $\big ($hence $\div(\sa^\sharp)=\sum _{i=1}^m (\nabla _{e_i} \sa) (e_i) = - d^* \sa = 0 \big )$.  Since $df$ and $\sa$ lie in orthogonal
subspaces invariant by $\D$, $-\D df=\la df$, 
$-\D\sa=\la\sa$, and hence $-\ov\D (\sa^\sharp)=\la \sa^\sharp$.  We claim that $\sa\equiv 0$, for otherwise, via \e{8.7} and \e{8.11}
\begin{eqnarray}
\vp_{\Phi}(\sigma^\sharp)=\int_M 2(\la-\frac{2}{m} \text{Scal}^M) |\sigma^\sharp|^2\, dx <0.
\end{eqnarray}
On the other hand, via \e{8.9} and \e{8.7} we have
\begin{eqnarray}
\vp_{\Phi}(\sigma^\sharp)=\int_M|L_{\sigma^\sharp}g|^2dv_g\geq 0,
\end{eqnarray}
which is a
contradiction.

We conclude that $v=(df)^\sharp$; Indeed, by \e{8.18} $\vp_{\Phi} \big ((df)^\sharp \big ) < 0$. Since $d(-\D f)=d(\la f)$,  by adding a suitable constant, we have
$-\D f=\la f$.  In conclusion the set on which $\varphi$ is negative is generated by
the differentials of eigenfunctions $\la$ of $-\D$ with positive eigenvalues
$\la<\frac{4}{3m}\text{Scal}^{\, M}\, .$
\end{proof}

Recall

\noindent
\begin{prop}\label{P:8.2} (c.f. (2.8) in \cite{Smi}, p.233)\ \ If $M$ is an
Einstein $m$-manifold with $\operatorname{Ricci}\, \operatorname{tensor} = c\, \langle \, , \, \rangle _M$
for some $c\in \mathbb R$, then $c=\frac 1m \operatorname{Scal}^M$ \end{prop}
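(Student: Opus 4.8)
The plan is to derive the identity by taking the trace of the Ricci operator and invoking the Einstein hypothesis; no analysis is involved, so the whole argument reduces to a one-line computation once the definitions are unwound. Recall from the discussion immediately preceding the proposition that $\operatorname{Scal}^M$ is defined to be the trace of the Ricci curvature operator $\operatorname{Ric}^M : T_xM \to T_xM$ on $M$, as in \eqref{3.4}.

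First I would restate the Einstein condition in operator form. The Ricci tensor is obtained from the selfadjoint operator $\operatorname{Ric}^M$ via $(v,w) \mapsto \langle \operatorname{Ric}^M(v), w\rangle_M$, so the hypothesis that the Ricci tensor equals $c\langle\,\cdot\,,\,\cdot\,\rangle_M$ says precisely that $\langle \operatorname{Ric}^M(v), w\rangle_M = c\langle v, w\rangle_M$ for all tangent vectors $v,w$ at each point of $M$; equivalently, $\operatorname{Ric}^M = c\,\operatorname{Id}$ as a field of selfadjoint endomorphisms.

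Next, fixing a point $x \in M$ and a local orthonormal frame $\{e_1,\cdots,e_m\}$, I would compute the trace directly:
\begin{equation*}
\operatorname{Scal}^M = \operatorname{trace}(\operatorname{Ric}^M) = \sum_{i=1}^m \langle \operatorname{Ric}^M(e_i), e_i\rangle_M = \sum_{i=1}^m c\,\langle e_i, e_i\rangle_M = cm,
\end{equation*}
where the last equality uses orthonormality of the frame. Dividing by $m$ yields $c = \tfrac{1}{m}\operatorname{Scal}^M$, which is the desired conclusion.

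There is essentially no obstacle here: the only point requiring care is the bookkeeping of conventions, namely that $\operatorname{Scal}^M$ is the unnormalized trace of $\operatorname{Ric}^M$ rather than an averaged quantity, and that the Einstein constant $c$ is to be read off the full Ricci operator (not a component-wise or sectional normalization). Both conventions are pinned down by the definitions already recorded in the excerpt, so the displayed computation is complete and the proof terminates immediately.
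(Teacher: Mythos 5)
Your proof is correct: the paper itself states this proposition without proof, merely recalling it from Smith \cite{Smi}, and your one-line trace computation (Einstein condition $\Rightarrow$ $\operatorname{Ric}^M = c\,\operatorname{Id}$ as an operator, then trace over an orthonormal frame gives $\operatorname{Scal}^M = cm$) is exactly the standard argument that justifies it. You also correctly pin down the one convention that matters here, namely that $\operatorname{Scal}^M$ is the unnormalized trace of the Ricci operator as defined in Section 8 of the paper, so nothing is missing.
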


Let $\lambda(r)$=$\sharp\{$eigenvalues $\lambda$ of $-\triangle:0<\lambda<r\}$ and $m(r)$ is the multiplicity of $r$ (with $m(0)$ defined to be 0). Then we have the following result:
\begin{theorem}
Let $M$ be a compact oriented Einstein manifold with $\langle \text{Ric}^M (v), v \rangle _M =c\, $ for every unit vector $v$ at every point of $M\, ,$ where $c \in \mathbb R$ is a constant. Then we have $(a)$ $\Phi$-$\operatorname{index} (\operatorname{Id}_M) = \lambda(\frac{4c}{3})$,\\
$(b)$ $\Phi$-$\operatorname{nullity} (\operatorname{Id}_M) = \operatorname{dim} (\underline{i})+m(\frac{4c}{3})$,\\
$(c)$$($\cite {13}$)$ $p$-$\operatorname{index} (\operatorname{Id}_M) = \lambda(\frac{2mc}{m+p-2})$,\\
$(d)$$($\cite {13}$)$ $p$-$\operatorname{nullity} (\operatorname{Id}_M) = \operatorname{dim} (\underline{i})+m(\frac{2mc}{m+p-2})$,\\
where $\underline{i}$ denotes the algebra of infinitesimal isometries, i.e. of vector fields $v$ satisfying $L_vg=0$.
\end{theorem}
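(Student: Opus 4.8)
The plan is to diagonalize the quadratic form $\vp_\Phi$ of \eqref{8.1} on $\Gamma(TM)$ using the Hodge--Helmholtz decomposition together with the spectral decomposition of $-\D$, and then read off the two signatures. First I would split each vector field by writing $v^\flat=df+\delta$ with $df$ exact and $\delta$ coclosed ($d^{\ast}\delta=0$); since $\div v=-d^{\ast}v^\flat$, this is the $L^2$-orthogonal decomposition of $v$ into a gradient field and a divergence-free field. The crucial preliminary point is that $H^{E_\Phi}_{\mathrm{Id}}$ is block-diagonal for this splitting. This is read off from \eqref{8.11}: the operator $-\overline{\D}$ commutes with $d$ and $d^{\ast}$, hence preserves the mutually $L^2$-orthogonal exact and coclosed subspaces; by \eqref{8.7} the term $\int_M v(\div v)\,dx=-\int_M(d^{\ast}v^\flat)^2\,dx$ depends only on the exact part; and because $M$ is Einstein with $\ric^M=c\,\mathrm{Id}$, the curvature term is just $-4c\int_M|v|^2\,dx$, which is manifestly orthogonal. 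Consequently the index and nullity are the sums of the contributions of the two blocks.

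On the gradient block I would expand $f$ in eigenfunctions of $-\D$. For $-\D f=\la f$ with $\la>0$, the computation already recorded in \eqref{8.18} --- which uses $\ric^M=c\,\mathrm{Id}$ and $\mathrm{Scal}^M=mc$ (Proposition \ref{P:8.2}) --- gives $\vp_\Phi\big((df)^\sharp\big)=3\big(\la-\tfrac{4c}{3}\big)\int_M|(df)^\sharp|^2\,dx$. Thus $(df)^\sharp$ is a negative direction exactly when $0<\la<\tfrac{4c}{3}$ and a null direction exactly when $\la=\tfrac{4c}{3}$, while constants contribute nothing ($df=0$). Since $f\mapsto(df)^\sharp$ is injective on nonconstant eigenfunctions and the fields $(df_k)^\sharp$ are $L^2$-orthogonal for distinct eigenvalues, this block contributes $\la(\tfrac{4c}{3})$ to the index and $m(\tfrac{4c}{3})$ to the nullity; the index is finite because the eigenvalues of $-\D$ accumulate only at $+\infty$.

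On the divergence-free block I would instead invoke \eqref{8.9}: when $\div v=0$ the term $\int_M v(\div v)\,dx$ vanishes and $\vp_\Phi(v)=\int_M|L_vg|^2\,dx\ge 0$. Hence this block is positive semidefinite and contributes nothing to the index, and its radical is exactly $\{v:L_vg=0\}$, the space $\underline{i}$ of Killing fields (a harmonic $1$-form lies in this block and is Killing precisely when parallel, so it enters only through $\underline{i}$). Adding the two blocks gives $\Phi$-$\operatorname{index}(\mathrm{Id}_M)=\la(\tfrac{4c}{3})$ and $\Phi$-$\operatorname{nullity}(\mathrm{Id}_M)=\dim(\underline{i})+m(\tfrac{4c}{3})$, proving (a) and (b); note the two null subspaces meet only in $0$, since a nonzero gradient eigenfield with $\la=\tfrac{4c}{3}>0$ has $\div(df)^\sharp=-\la f\neq 0$ and is therefore not Killing.

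Statements (c) and (d) are the corresponding $p$-energy results of \cite{13}; they are obtained by the identical block-diagonal argument applied to the $p$-Hessian of $\mathrm{Id}$, the sole change being that the critical threshold $\tfrac{4c}{3}$ is replaced by $\tfrac{2mc}{m+p-2}$. I expect the only real obstacle to be the block-diagonality asserted in the first paragraph --- that no cross terms between the exact and coclosed parts survive in $H^{E_\Phi}_{\mathrm{Id}}$ --- since once that orthogonality and the two per-block signature computations are in hand, the counting of index and nullity is routine linear algebra.
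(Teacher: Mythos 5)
Your proposal is correct and follows essentially the same route as the paper: the paper's own proof rests on Proposition \ref{P:8.1}, whose argument is precisely your decomposition $v^\flat=df+\sigma$ into gradient and coclosed (divergence-free) blocks, with the eigenfield computation \eqref{8.18} giving the negative and null directions on the gradient block and the positivity \eqref{8.9} (together with \eqref{8.7}) showing the divergence-free block contributes only the Killing fields $\underline{i}$ to the nullity. The only difference is completeness: you make explicit the block-diagonality of the bilinear form $H^{E_{\Phi}}_{\operatorname{Id}}$ and the radical (rather than quadratic zero-set) argument for the nullity, points the paper leaves implicit in its terse citation of Proposition \ref{P:8.1}.
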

\begin{proof}
$(a)$ From Proposition \ref{P:8.1} and $c=\frac{\operatorname{Scal}^M}{m}$, 
\begin{eqnarray}\Phi\operatorname{-index}(\operatorname{Id}_M) = \lambda(\frac{4\operatorname{Sca}l^M}{3m})=\lambda(\frac{4c}{3}).\nonumber\end{eqnarray}
$(b)$ Since $H^{E_{\Phi}}_{\text {Id}}$ is a bilinear two form, we only need to find the dimension of the space $\{v: H^{E_{\Phi}}_{\text {Id}}(v,v)=0\}$.
From \e{8.9}, \e{8.7} and Proposition \ref{P:8.1}, we have
\begin{eqnarray}
\operatorname{dim} \{v: H^{E_{\Phi}}_{\text {Id}}(v,v)=0\}=\operatorname{dim} (\underline{i})+m(\frac{4c}{3}).
\end{eqnarray}
\end{proof}

\section{Compact  irreducible $\Phi$-SSU homogeneous space}
\begin{thm}\label{T:9.1}
Let $M=G/H$ be a compact irreducible Riemannian homogeneous space with first eigenvalues $\lambda_1$ and scalar curvature $\text{Scal} ^{\, M}$. The following four statements $($A$)$ through $($D$)$ are equivalent.\smallskip

\noindent
$($A$)$ $\lambda_1<\frac{4}{3m}\text{Scal} ^{\, M}.$\\
$($B$)$ $M$ is $\Phi$-U; i.e., The identity map on $M$ is $\Phi$-unstable.\\
$($C$)$ $M$ is $\Phi$-SU.\\
$($D$)$ $M$ is $\Phi$-SSU.
\end{thm}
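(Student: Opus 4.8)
The plan is to prove the four statements equivalent by establishing the single cycle
\[(\text{A})\Rightarrow(\text{D})\Rightarrow(\text{C})\Rightarrow(\text{B})\Rightarrow(\text{A}),\]
so that only one genuinely new implication needs to be verified. Three of the four arrows are already available. The implication $(\text{D})\Rightarrow(\text{C})$ is the content of Theorem 1.2 (proved in Section 4), that a compact $\Phi$-SSU manifold is $\Phi$-SU. The implication $(\text{C})\Rightarrow(\text{B})$ is immediate from the definition of $\Phi$-SU together with Corollary \ref{C:2.3}: the identity map $\text{Id}_M$ is $\Phi$-harmonic, so if it were $\Phi$-stable it would be a nonconstant stable $\Phi$-harmonic self-map, contradicting the $\Phi$-SU property; hence $\text{Id}_M$ is $\Phi$-unstable, i.e. $M$ is $\Phi$-U. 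Finally $(\text{B})\Rightarrow(\text{A})$ is exactly Proposition \ref{P:8.1}, once we know that an irreducible homogeneous space is Einstein, so that $\langle \operatorname{Ric}^M(\mathsf v),\mathsf v\rangle=\tfrac1m\text{Scal}^{\,M}$ for every unit vector; this is Proposition \ref{P:8.2}. Thus the whole theorem reduces to proving $(\text{A})\Rightarrow(\text{D})$.

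For $(\text{A})\Rightarrow(\text{D})$ I would use the canonical minimal immersion of $M$ into a round sphere furnished by the first eigenfunctions. Let $V_{\lambda_1}$ be the $\lambda_1$-eigenspace of $-\triangle$, of dimension $q$, with an $L^2$-orthonormal basis $f_1,\dots,f_q$, and set $\phi=(f_1,\dots,f_q)\colon M\to\mathbb R^q$. Because $M=G/H$ is homogeneous, $V_{\lambda_1}$ is a $G$-representation and $\phi$ is $G$-equivariant, so the pulled-back tensor $\phi^*\langle\,\cdot\,\rangle_{\mathbb R^q}$ is a $G$-invariant symmetric $2$-tensor on $M$. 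Here is where \emph{irreducibility} is essential: by Schur's lemma the space of $G$-invariant symmetric $2$-tensors is one-dimensional, so $\phi^*\langle\,\cdot\,\rangle_{\mathbb R^q}$ is a constant multiple of the metric. After rescaling $\phi$ by that constant we may assume $\phi$ is an isometric immersion; by Takahashi's theorem its image then lies minimally in a sphere $S^{q-1}(r)\subset\mathbb R^q$, and the relation $-\triangle\phi=\lambda_1\phi$ forces $r^2=\tfrac{m}{\lambda_1}$.

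It remains to evaluate the $\Phi$-SSU functional \eqref{1.5} for this immersion. Writing $\mathsf B$ for the second fundamental form of $M$ in $\mathbb R^q$ and using the Gauss equation exactly as in the reduction of Section 5 (cf. Corollary \ref{C:5.4}), one gets for every unit $\mathsf x$
\[\mathsf F_y(\mathsf x)=3\sum_{i=1}^m\langle \mathsf B(\mathsf x,\mathsf x),\mathsf B(\mathsf e_i,\mathsf e_i)\rangle-4\langle \operatorname{Ric}^M(\mathsf x),\mathsf x\rangle.\]
Since $M$ is minimal in $S^{q-1}(r)$, the component of $\sum_i\mathsf B(\mathsf e_i,\mathsf e_i)$ tangent to $S^{q-1}(r)$ vanishes, leaving only the radial contribution of the sphere's second fundamental form, so that $\sum_i\langle \mathsf B(\mathsf x,\mathsf x),\mathsf B(\mathsf e_i,\mathsf e_i)\rangle=\tfrac{m}{r^2}=\lambda_1$; and the Einstein condition gives $\langle \operatorname{Ric}^M(\mathsf x),\mathsf x\rangle=\tfrac1m\text{Scal}^{\,M}$. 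Hence $\mathsf F_y(\mathsf x)=3\lambda_1-\tfrac4m\text{Scal}^{\,M}$, which is independent of $\mathsf x$ and $y$ and is negative precisely when $\lambda_1<\tfrac{4}{3m}\text{Scal}^{\,M}$. Thus $(\text{A})$ makes $\mathsf F_y$ negative for the canonical immersion, i.e. $M$ is $\Phi$-SSU, closing the cycle. The main obstacle is the middle step: one must justify that the first-eigenfunction map is, after rescaling, an isometric minimal immersion into a sphere of the precise radius $\sqrt{m/\lambda_1}$ — this is exactly where irreducibility, through Schur's lemma and the resulting homothety, is indispensable; the curvature computation in the final step is then routine.
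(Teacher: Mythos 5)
Your proposal is correct and takes essentially the same route as the paper: the identical cycle $(\text{A})\Rightarrow(\text{D})\Rightarrow(\text{C})\Rightarrow(\text{B})\Rightarrow(\text{A})$, with $(\text{D})\Rightarrow(\text{C})$ from Theorem \ref{T:1.1}, $(\text{B})\Rightarrow(\text{A})$ from Proposition \ref{P:8.1} plus the Einstein property, and $(\text{A})\Rightarrow(\text{D})$ via the first-eigenfunction (Takahashi) minimal immersion into a sphere. The only difference is bookkeeping: the paper rescales the metric to $g_1=\frac{\lambda_1}{m}g_0$ so that the Takahashi immersion lands in the \emph{unit} sphere, then quotes Corollary \ref{C:5.4} together with the scale-invariance of \eqref{1.5}, whereas you keep the metric, rescale the eigenmap so it is isometric into $S^{q-1}\big(\sqrt{m/\lambda_1}\big)$, and evaluate $\mathsf F_y(\mathsf x)=3\lambda_1-\frac{4}{m}\text{Scal}^{\,M}$ directly --- the same Gauss-equation computation that underlies Corollary \ref{C:5.4}.
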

\begin{proof}
Assume $\lambda_1<\frac{4}{3m}\text{Scal} ^{\, M}\, .$ Then $\lambda_1<\frac{2}{3}\, .$ For the Cartan-Killing metric $g_0$ on $M$ has Scalar curvature $\text{Scal} ^{\, M} = \frac {m}{2}\, .$ Let $M_1$ denote $M$ with the metric $g_1 = \frac {\lambda_1}{m} g_0\, .$ Then $\text{Scal} ^{\, M_1} = \frac {m^2}{2\lambda_1}\, ,$ $\text{Ric} ^{\, M_1} \equiv \frac {m}{2\lambda_1}\, ,$ and by Takahashi Theorem, there exists an isometric minimal immersion of $(M_1, g_1)$ in the unit sphere (cf. \cite {Ta}); so
\[
\begin{aligned}
\frac{3m} {4}-\ric^{M_1} & = \frac{3m} {4}- \frac {m}{2\lambda_1} \\
& = \frac {m}{2} (\frac{3} {2}- \frac {1}{\lambda_1})\\
& < 0.
\end{aligned}
\]
It follows from Corollary \ref{C:5.4} that $M_1 = (M, \frac {\lambda_1}{m} g_0)\, $ is $\Phi$-SSU. Since the metric change from $\frac {\lambda_1}{m} g_0$ to $g_0$ does not change the sign of \e{1.5} and hence by Definition \ref{D:1.3} preserves $(M, g_0)$ to be $\Phi$-SSU.  We  conclude that $($A$)$ $\Rightarrow$  $($D$)\, .$ It follows from Theorem $1.1. (a), (b), (c), (d)$ that $($D$)$ $\Rightarrow$ $($C$)\, .$ That $($C$)$ $\Rightarrow$ $($B$)$ is obvious.

Since a compact irreducible homogeous space M is an Einstein manifold, $($B$)$ $\Rightarrow$ $($A$)$ follows from Proposition \ref{P:8.1} .
\end{proof}

{\bf{Acknowledgements}}: This work was written while the first author visited Department of Mathematics of the University of Oklahoma in USA. He
would like to express his sincere thanks to Professor Shihshu Walter Wei for his help, hospitality and support.


\begin{thebibliography}{16}
\bibitem {A} T. Aubin, quations differentielles non linaires et problme de Yamabe concernant la courbure scalaire, J. Math. Pures Appl., (9) 55 (1976), 269-296.
\bibitem{CNS} Caffarelli, L.; Nirenberg, L.; Spruck, J. Nonlinear second order elliptic equations. Iv. Starshaped compact Weingarten hypersurfaces. Current topics in partial differential equations, 1-26, Kinokuniya, Tokyo, 1986.
\bibitem{CY} S-Y. A. Chang and P.C. Yang, The inequality of Moser and Trudinger and applications to conformal geometry. Dedicated to the memory of Jrgen K. Moser. Comm. Pure Appl. Math. 56 (2003), no. 8, 1135-1150. \bibitem{D} P. Delano\"e, Plongements radiaux $S^n\hookrightarrow \mathbb R^{n+1}$ courbure de Gauss positive prescrite. (French) [Radial embeddings $S^n\hookrightarrow \mathbb R^{n+1}$  with prescribed positive Gauss curvature] Ann. Sci. cole Norm. Sup. (4) 18 (1985), no. 4, 635-649.
\bibitem{CW} B.-Y. Chen and S.W. Wei, {\it Sharp growth estimates for warping functions in multiply warped product manifolds}, J. Geom. Symmetry Phys. {\bf 52} (2019) 27-46; arXiv:1809.05737.v1.
\bibitem{DLW} Y.X. Dong; H. Lin; S.W. Wei, $L^2$ curvature pinching theorems and vanishing theorems on complete Riemannian manifolds, to appear in Tohoku Math. J. 71 (2019); arXiv:1604.04862.
\bibitem{DW} Y. X. Dong and S.W. Wei, {\em On vanishing theorems for vector bundle valued $p$-forms and their applications},  Comm. Math. Phy. 304, no. 2, (2011), 329-368. arXive: 1003.3777
\bibitem {1} J. Eells, L. Lemaire, A report on harmonic maps, Bull. Lond. Math. Soc. 10(1978) 1-68.
\bibitem {2} J. Eells, L. Lemaire, Another report on harmonic maps, Bull. Lond. Math. Soc. 20(1988) 385-524.
\bibitem {20} J. Eells, L. Lemaire, Selected topics in harmonic maps, CBMS Regional Conf. Series Number 50.
\bibitem {F} M.H. Freedman, The topology of four dimensional manifolds, J. Diff. Geom. 17(1982) 357-454.
\bibitem {3} R. Howard, S.W. Wei, Nonexistence of stable harmonic maps to and from certain homogeneous spaces and submanifolds of Euclidean space, Trans. of Amer. Math. Soc., 294(1986) 319-331.
\bibitem{HW} R. Howard and S.W. Wei, {\it On the existence and nonexistence of stable submanifolds and currents in positively curved manifolds and the topology of submanifolds in Euclidean spaces}. Geometry and topology of submanifolds and currents, 127--167, Contemp. Math., 646, Amer. Math. Soc., Providence, RI, 2015.
\bibitem{HaW} R. Howard and S.W. Wei, {\it On the existence and nonexistence of stable submanifolds and currents in positively curved manifolds and the topology of submanifolds in Euclidean spaces}. Geometry and Topology of Submanifolds and Currents,  Contemp. Math., {\bf 646} (2015), 127-167.
\bibitem {J} Z.R. Jin, A counterexample to the Yamabe problem for complete noncompact manifolds", Lect. Notes Math., 1306 (1988), 93-101.
\bibitem {KN}  S. Kobayashi and K. Nomizu,  Foundations of Differential
Geometry I,  Interscience,  New York (1969)
\bibitem {KN} S. Kawai, N. Nakauchi, Some result for stationary maps of a functional related to pullback metrics, Nonlinear analysis, 74 (2011) 2284-2295.
\bibitem {KN2} S. Kawai, N. Nakauchi, Stationary maps of a functional related to pullbacks of metrics, Diff. Geo. and its Appli., 44(2016) 161-177.
\bibitem {KoN}  S. Kobayashi and K. Nomizu,  Foundations of Differential Geometry I, Interscience,  New York (1969)
\bibitem {10} H.B. Lawson, Lectures on minimal submanifolds, vol.1, Publish or Perish, Berkeley, Calif., 1980.
\bibitem {LS} H.B. Lawson and J. Simons, On stable currents, and
their application to global problems in real and complex geometry, Ann. of
Math 110  (1979), 127-142.
\bibitem {12} P.F. Leung, On the stability of harmonic maps, in: Harmonic maps, in: Lecture Notes in Math., vol 949, Springer-verlag, 1982, 122-129.
\bibitem {6} N. Nakauchi, A variational problem related to conformal maps, Osaka J. Math. 48(2011) 719-741.
\bibitem {5} N. Nakauchi, Y. Takenaka, A variational problem for pullback metrics, Ricerche Mat. 60(2011) 219-235.
\bibitem {O} V. I. Oliker,  Hypersurfaces in $\mathbb R^{n+1}$ with prescribed Gaussian curvature and related equations of Monge-AmpÃšre type. Comm. Partial Differential Equations 9 (1984), no. 8, 807-838.
\bibitem {S} R. Schoen, Conformal deformation of a Riemannian metric to constant scalar curvature", J. Differential Geom., 20: (1984), 479-495.
\bibitem {Sm} S. Smale, Generalized Poincar\'{e} conjecture in dimension greater than four, Ann. of Math. 74(1961) 391-406.
\bibitem {SP} E. Spanier, Algebraic Topology, McGraw-Hill.
\bibitem {11} Y.B. Shen, Y.L. Pan, Harmonic maps of ellipsoids,(in Chinese), Mathematica Acta Scientia, 6(1986), 71-75.
\bibitem {Smi}  R.T. Smith, The second variational formula for harmonic
mapping, Proc. Amer. Math. Soc. 47 (1975), 229-236.
\bibitem{Ta} T. Takahashi Minimal immersions of Riemannian manifolds. J. Math. Soc. Japan 18 1966 380-385.
\bibitem{T}  N.S. Trudinger, Remarks concerning the conformal deformation of Riemannian structures on compact manifolds, Ann. Scuola Norm. Sup. Pisa (3), 22: (1968), 265-274.
\bibitem{TW} A.E. Treibergs, S.W. Wei, Embedded hyperspheres with prescribed mean curvature. J. Differential Geom. 18 (1983), no. 3, 513-521.
\bibitem {15} S.W. Wei, An average process in the calculus of variations and the stability of harmonic maps. Bull. Inst. Math. Acad. Sinica 11 (1983), no. 3, 469-474.
\bibitem{W3}  S.W. Wei, {\em On topological vanishing theorems and the stability of Yang-Mills fields}. Indiana Univ. Math. J. 33 (1984), no. 4, 511-529.
\bibitem{W0} S.W. Wei, {\em  An extrinsic average variational method, Recent developments in geometry} (Los Angeles, CA, 1987),
Contemp. Math. vol 101, Amer. Math. Soc., Providence, RI,
(1989), 55-78.
\bibitem {13} S.W. Wei, Representing homotopic group and spaces of maps by by $p$-harmonic maps,  Indiana Univ. Math. J. 47 (1998), no. 2, 625-670.
 \bibitem{WY} S.W. Wei and C.M. Yau, {\em   Regularity of $p$-energy
minimizing maps and $p$-superstrongly unstable indices} J. Geom.
Analysis 4, (2)(1994) 247-272
\bibitem {7} L.N. Wu, S.W. Wei, J. Liu and Y. Li, Discovering geometric and topological properties of ellipsoids by curvatures, British J. of Math. and Cumputer Science, 8(4)(2015) 318-329.
\bibitem {9} Y.L. Xin, Some results stable harmonic maps, Duke Math. J. (1980) 609-613.
\bibitem {Y} S.T. Yau, Problem section. Seminar on Differential Geometry, pp. 669-706, Ann. of Math. Stud., 102, Princeton Univ. Press, Princeton, N. J., 1982.
\end{thebibliography}
\end{document}